\documentclass[11pt]{amsart}

\usepackage{amsfonts,amssymb,amsmath,amsthm,latexsym,graphics,epsfig}
\usepackage{verbatim,enumerate,array,booktabs,ctable,color,bigstrut,prettyref}
\usepackage[all]{xy}
\usepackage[backref]{hyperref}
\usepackage[OT2,T1]{fontenc}

\newrefformat{eq}{\textup{(\ref{#1})}}
\newrefformat{prty}{\textup{(\ref{#1})}}

\definecolor{mylinkcolor}{rgb}{0.8,0,0}
\definecolor{myurlcolor}{rgb}{0,0,0.8}
\definecolor{mycitecolor}{rgb}{0,0,0.8}
\hypersetup{colorlinks=true,urlcolor=myurlcolor,citecolor=mycitecolor,linkcolor=mylinkcolor,linktoc=page,breaklinks=true}

\DeclareSymbolFont{cyrletters}{OT2}{wncyr}{m}{n}
\DeclareMathSymbol{\Sha}{\mathalpha}{cyrletters}{"58}

\addtolength{\textwidth}{4cm} \addtolength{\hoffset}{-2cm}
\addtolength{\marginparwidth}{-2cm}

\newtheorem{defn}{Definition}[section]

\newtheorem{lemma}[defn]{Lemma}

\newtheorem{thm}[defn]{Theorem}
\newtheorem{theorem}[defn]{Theorem}
\newtheorem{cor}[defn]{Corollary}
\newtheorem{prop}[defn]{Proposition}

\newtheorem{conj}[defn]{Conjecture}

\theoremstyle{definition}

\newtheorem*{ack}{Acknowledgements}
\newtheorem{remark}[defn]{Remark}

\newtheorem{example}[defn]{Example}

% Some sets
%Blackboard Bold Letters:

\newcommand{\RR}{\mathbb R}
\newcommand{\QQ}{\mathbb Q}

\newcommand{\ZZ}{\mathbb Z}

\newcommand{\FF}{\mathbb F}

\newcommand{\OO}{\mathcal{O}}

\newcommand{\Gal}{\operatorname{Gal}}

%\renewcommand{\Cl}{\operatorname{Im}}

% Abbreviation

%split and non-split Cartan

\newcommand{\Cl}{\mathrm{Cl}}

%Commands for Stoll's Paper
\newcommand{\Sel}{{\rm Sel}}
\newcommand{\res}{{\rm res}}
\newcommand{\rank}{{\rm rank\,}}
\renewcommand{\Cl}{{\rm Cl}}
\newcommand{\Div}{{\rm Div}^{0}_-}
\newcommand{\val}{{\rm val}}

\renewcommand{\div}{{\rm div}}
\newcommand{\disc}{{\rm disc}}
\newcommand{\Sharp}{{\rm Sharp}}
\newcommand{\sgn}{{\rm sign}}

\begin{document}

% Title, authors and addresses

% use the thanksref command within \title, \author or \address for footnotes;
% use the corauthref command within \author for corresponding author footnotes;
% use the ead command for the email address,
% and the form \ead[url] for the home page:
% \title{Title\thanksref{label1}}
% \thanks[label1]{}
% \author{Name\corauthref{cor1}\thanksref{label2}}
% \ead{email address}
% \ead[url]{home page}
% \thanks[label2]{}
% \corauth[cor1]{}
% \address{Address\thanksref{label3}}
% \thanks[label3]{}

\title[Rank bounds for jacobians]{Bounds of the rank of the Mordell--Weil group of Jacobians of Hyperelliptic Curves}

\author{Harris B. Daniels}
\address{Department of Mathematics and Statistics, Amherst College, Amherst, MA 01002, USA}
\email{hdaniels@amherst.edu}
\urladdr{\url{http://www3.amherst.edu/~hdaniels/}}

\author{\'Alvaro Lozano-Robledo}
\address{Department of Mathematics, University of Connecticut, Storrs, CT 06269, USA}
\email{alvaro.lozano-robledo@uconn.edu}
\urladdr{\url{http://alozano.clas.uconn.edu/}}

\author{Erik Wallace}
\address{Department of Mathematics, University of Connecticut, Storrs, CT 06269, USA}
\email{erik.wallace@uconn.edu}

\subjclass{Primary: 11G10, Secondary: 14K15.}

\begin{abstract}
In this article we extend work of Shanks and Washington on cyclic extensions, and elliptic curves associated to the simplest cubic fields. In particular, we give families of examples of hyperelliptic curves $C: y^2=f(x)$ defined over $\QQ$, with $f(x)$ of degree $p$, where $p$ is a Sophie Germain prime, such that the rank of the Mordell--Weil group of the jacobian $J/\mathbb{Q}$ of $C$ is bounded by the genus of $C$ and the $2$-rank of the class group of the (cyclic) field defined by $f(x)$, and exhibit examples where this bound is sharp.
\end{abstract}
\maketitle
\section{Introduction}

Let $C/\QQ$ be a hyperelliptic curve, given by a model $y^2=f(x)$, with $f(x)\in\QQ[x]$, and let $J/\QQ$ be the jacobian variety attached to $C$. The Mordell--Weil theorem shows that $J(\QQ)$ is a finitely generated abelian group and, therefore,
$$J(\QQ)\cong J(\QQ)_\text{tors} \oplus \ZZ^{R_{J/\QQ}},$$
where $J(\QQ)_\text{tors}$ is the (finite) subgroup of torsion elements, and $R_{J/\QQ}=\operatorname{rank}_\ZZ(J(\QQ))\geq 0$ is the rank of $J(\QQ)$. In this article we are interested in bounds of $R_{J/\QQ}$ in terms of invariants of $C$ or $f(x)$. 

In \cite{Wash1}, Washington showed the following bound for the rank of certain elliptic curves, building on work of Shanks on the so-called simplest cubic fields (see \cite{Shanks1}).

\begin{theorem}[{\cite[Theorem 1]{Wash1}}]\label{thm-wash}
Let $m\geq 0$ be an integer such that  $m^2+3m+9$ is square-free. Let $E_m$ be the elliptic curve given by the Weierstrass equation
$$E_m\colon y^2 = f_m(x)= x^3+mx^2-(m+3)x+1.$$
Let $L_m$ be the number field generated by a root of $f_m(x)$, let $\operatorname{Cl}(L_m)$ be its class group, and let $\Cl(L_m)[2]$ be the $2$-torsion subgroup of $\Cl(L_m)$. Then,
$$\operatorname{rank}_\ZZ(E_m(\QQ)) \leq 1 + \operatorname{dim}_{\FF_2}(\operatorname{Cl}(L_m)[2]).$$
\end{theorem}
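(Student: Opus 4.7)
The plan is to perform a Kummer-theoretic $2$-descent on $E_m$ using the $2$-torsion defined over the splitting field of $f_m$. First one verifies that $f_m$ is irreducible over $\QQ$ (the only candidate rational roots are $\pm 1$, and $f_m(1) = -1$, $f_m(-1) = 2m+3 \neq 0$ for $m \geq 0$) and that $\disc(f_m) = (m^2+3m+9)^2$ is a perfect square. Together these show that $L_m = \QQ(\alpha)$ is a cyclic, totally real cubic extension of $\QQ$. Since $E_m(\QQ)[2] = 0$, we have $\rank_\ZZ E_m(\QQ) = \dim_{\FF_2} E_m(\QQ)/2E_m(\QQ)$, and the descent map
$$\delta \colon E_m(\QQ)/2E_m(\QQ) \hookrightarrow L_m^*/(L_m^*)^2, \qquad (x, y) \mapsto x - \alpha,$$
is an injective homomorphism (with $O \mapsto 1$).

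The next step is to constrain the image of $\delta$ inside a Selmer-like subgroup $S \subseteq L_m^*/(L_m^*)^2$ cut out by two conditions: the \emph{norm condition} $N_{L_m/\QQ}(x - \alpha) = f_m(x) = y^2 \in (\QQ^*)^2$, and the \emph{support condition} that $v_\wp(x - \alpha)$ is even for every prime $\wp$ of $\OO_{L_m}$ not dividing $\disc(f_m)$. The latter follows from the standard observation that if $\wp \nmid \disc(f_m)$ and $v_\wp(x - \alpha_i) > 0$ for one root $\alpha_i$, then $v_\wp(x - \alpha_j) = 0$ for $j \neq i$ (else $\wp \mid \alpha_i - \alpha_j$ would divide $\disc(f_m)$), which forces $v_\wp(y^2) = v_\wp(x - \alpha)$ to be even. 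Using the squarefree hypothesis, every rational prime $p \mid (m^2+3m+9)$ is totally ramified in $L_m/\QQ$, with $p\OO_{L_m} = \wp_p^3$; hence $[\wp_p] \in \Cl(L_m)$ has order dividing $3$ and vanishes in $\Cl(L_m)[2]$. Factoring $(x - \alpha) = \mathfrak{a}^2 \prod_p \wp_p^{e_p}$ with $e_p \in \{0, 1\}$ and using $\wp_p^3 = (p)$ to extract a rational factor yields $(x - \alpha) = r \cdot \mathfrak{c}^2$ with $r \in \ZZ_{>0}$ squarefree and $[\mathfrak{c}] \in \Cl(L_m)[2]$. The assignment $\eta \mapsto [\mathfrak{c}]$ defines a homomorphism $\psi \colon S \to \Cl(L_m)[2]$.

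To finish, bound the kernel. An element of $\ker \psi$ has the form $r \cdot u \pmod{(L_m^*)^2}$ with $r \in \ZZ_{>0}$ squarefree supported at primes dividing $m^2+3m+9$ and $u \in \OO_{L_m}^*$. The norm condition gives $r \cdot N(u) \in (\QQ^*)^2$; since $r > 0$ is squarefree and $N(u) \in \{\pm 1\}$, this forces $r = 1$ and $N(u) = +1$. Hence $\ker \psi \cap \im \delta$ injects into the subgroup $U_+ := \{u \in \OO_{L_m}^* : N(u) = +1\}/(\OO_{L_m}^*)^2$. The rational point $(0, 1) \in E_m(\QQ)$ contributes $\delta((0, 1)) = -\alpha \in U_+$ (noting $N(-\alpha) = -N(\alpha) = +1$ since $\alpha_1 \alpha_2 \alpha_3 = -1$). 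The key claim is that $\im \delta \cap U_+$ is contained in the line spanned by $-\alpha$, which would give
$$\rank_\ZZ E_m(\QQ) \leq \dim_{\FF_2}(\ker \psi \cap \im \delta) + \dim_{\FF_2} \Cl(L_m)[2] \leq 1 + \dim_{\FF_2} \Cl(L_m)[2].$$

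The main obstacle is this last claim. A priori $U_+$ has $\FF_2$-dimension $2$: in a totally real cubic $\OO_{L_m}^*/(\OO_{L_m}^*)^2 \cong (\FF_2)^3$, and the norm-one subspace is two-dimensional. Cutting this down to a single line genuinely requires Shanks' explicit description of a fundamental system of units for the simplest cubic fields in terms of the roots $\alpha_i$ of $f_m$, together with the cyclic $\Gal(L_m/\QQ)$-action on the descent (or, alternatively, imposing local Selmer conditions at the totally ramified primes $p \mid (m^2+3m+9)$, which should force an additional $\FF_2$-linear relation among the candidate norm-one units). Without this refinement, the argument would only yield the weaker bound $\rank_\ZZ E_m(\QQ) \leq 2 + \dim_{\FF_2} \Cl(L_m)[2]$.
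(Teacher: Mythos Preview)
Your overall strategy---$2$-descent via $x-\alpha$, the norm condition landing in $H$, the support condition, and the map to $\Cl(L_m)[2]$---is exactly the framework the paper uses (packaged there through Stoll's implementation). You also correctly isolate the crux: the norm-one unit space $U_+$ has $\FF_2$-dimension $2$, and one must cut it down to $1$.

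However, your second proposed fix is a dead end. For each finite prime $v\in S=\{2\}\cup\{v:v\mid D\}$, the paper shows (Lemma~\ref{lem-genus1b}) that $f_m$ is irreducible over $\QQ_v$, so $m_v=1$. By Lemma~\ref{lem-local_dims} this forces $\dim H_v=2(m_v-1+d_vg)=0$ for odd $v\mid D$, and likewise $\dim I_v=0$; hence the local Selmer condition at the totally ramified primes is \emph{vacuous} and cannot supply the missing relation. (At $v=2$ one has $\dim H_2=2$ and $\dim J_2=1$, so there is a genuine condition there, but neither the paper nor your argument uses it, and it is not needed for the stated bound.)

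The constraint you are missing is the archimedean one, and this is precisely what the paper exploits. The Selmer image satisfies $\res_\infty(\delta(P))\in J_\infty=\delta_\infty(J(\RR))$, and by Lemma~\ref{lem-Real_dims} one has $\dim J_\infty=m_\infty-1-g=3-1-1=1$. Now Shanks' explicit roots (your first suggestion, made precise in Lemma~\ref{lem-wash}) show that $\alpha$ and its conjugates realize all eight sign patterns, i.e.\ $\rho_\infty=\dim\OO_{L_m}^{\times,+}/(\OO_{L_m}^\times)^2=0$. Thus the signature map $\res_\infty$ is injective on $\OO_{L_m}^\times/(\OO_{L_m}^\times)^2$, and in particular restricts to an isomorphism $U_+\xrightarrow{\sim}H_\infty$. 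Pulling back the $1$-dimensional $J_\infty\subset H_\infty$ gives exactly the one-dimensional subspace of $U_+$ that can meet $\im\delta$. In the paper's bookkeeping this is the inequality $j_\infty+\rho_\infty\le g=1$ of Proposition~\ref{prop1} and Theorem~\ref{thm-totally_pos}. Your vague appeal to ``Shanks' units together with the Galois action'' is pointing at the right input ($\rho_\infty=0$), but the mechanism that turns it into the missing linear relation is the real Selmer condition, not the Galois module structure per se.
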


In this article, we extend Washington's result to curves of genus $g\geq 2$. In order to find other families of hyperelliptic curves of genus $g\geq 2$ where a similar bound applies, we use a method of $2$-descent for jacobians described by Cassels, Poonen, Schaefer, and Stoll (see Section \ref{sec-2descent}; in particular, we follow the implementation described in \cite{stoll}). The examples come from hyperelliptic curves $y^2=f(x)$ such that $f(x)$ defines the maximal real subfield of a cyclotomic extension of $\QQ$, and the degree of $f(x)$ is $p$, a Sophie Germain prime. We obtain the following result.

\begin{thm}\label{thm-sophie-intro}
Let $q\geq 7$ be a prime such that $p=(q-1)/2$ is also prime, and let $L=\QQ(\zeta_q)^+$ be the maximal totally real subfield of $\QQ(\zeta_q)$. Let $f(x)$ be the minimal polynomial of $\zeta_q+\zeta_q^{-1}$ or $-(\zeta_q+\zeta_q^{-1})$, let $C/\QQ$ be the hyperelliptic curve $y^2=f(x)$, of genus $g=(p-1)/2$, and let $J/\QQ$ be its jacobian. Then, there are constants $\rho_\infty$ and $j_\infty$, that depend on $q$, such that  
\begin{align*}
\rank_\ZZ (J(\QQ))\leq \dim_{\FF_2}\Sel^{(2)}(\QQ,J)  \leq \rho_{\infty} + j_\infty + \dim_{\FF_2} (\Cl^+(L)[2]),
\end{align*}
where $\rho_{\infty}+j_\infty\leq p-1$. Further, if one of the following conditions is satisfied, 
\begin{enumerate}
\item the Davis--Taussky conjecture holds (Conjecture \ref{conj-taussky}), or
\item the prime $2$ is inert in the extension $\QQ(\zeta_p)^+/\QQ$, or
\item  $q\leq 92459$,
\end{enumerate} then $\rho_{\infty}=0$ and $j_\infty=g=(p-1)/2$, and $\dim_{\FF_2}\Sel^{(2)}(\QQ,J) \leq  g + \dim_{\FF_2}(\Cl(L)[2]).$
\end{thm}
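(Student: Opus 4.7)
The plan is to perform the explicit 2-descent on $J$, following the algorithm of Cassels--Poonen--Schaefer--Stoll as implemented in \cite{stoll}. Because $f(x)$ is irreducible of odd degree $p$, the étale algebra $\QQ[x]/(f(x))$ coincides with the field $L=\QQ(\zeta_q)^+$, and the descent map takes the clean form
$$J(\QQ)/2J(\QQ)\hookrightarrow \Sel^{(2)}(\QQ,J)\hookrightarrow L^\ast/(L^\ast)^2\QQ^\ast,$$
where a class $\sum(P_i-\infty)$ maps to $\prod(x(P_i)-\theta)$ with $\theta$ the image of $x$ in $L$, and $\Sel^{(2)}(\QQ,J)$ is the subgroup cut out by imposing the local Kummer conditions at every place of $\QQ$.

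Since the finite primes of bad reduction for $C$ divide $\disc(f)$, which is supported on $\{q\}$, every Selmer class is unramified outside $S=\{\infty,2,q\}$. The standard exact sequence
$$1\to \OO_{L,S}^{\ast}/(\OO_{L,S}^{\ast})^2\to L(S,2)\to \Cl_S(L)[2]\to 1,$$
together with the relation between $\Cl_S(L)$ and $\Cl^+(L)$ (removing the primes of $S$ modifies the narrow class group in a controlled way) and the global norm-square condition on Selmer elements, reduces $\dim_{\FF_2}\Sel^{(2)}(\QQ,J)$ to a bound of the shape $\dim_{\FF_2}\Cl^+(L)[2]+\rho_\infty+j_\infty$, where $\rho_\infty$ records the contribution from the real places of $L$ (image of the local Kummer map at $\infty$) and $j_\infty$ records the contribution from primes above $2$. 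A dimension count against the condition that the norm of a Selmer element is a square in $\QQ^\ast$ then forces $\rho_\infty+j_\infty\leq p-1$.

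For the refined statement, each of the three hypotheses pins down $\rho_\infty=0$ and $j_\infty=g=(p-1)/2$, and simultaneously allows the narrow class group term to be replaced by the ordinary one. Under (1), the Davis--Taussky conjecture yields the equality of 2-ranks $\dim_{\FF_2}\Cl^+(L)[2]=\dim_{\FF_2}\Cl(L)[2]$ and controls the sign map on units, pinning down $\rho_\infty=0$. Under (2), if $2$ is inert in $\QQ(\zeta_p)^+/\QQ$, the completion $L\otimes\QQ_2$ has a single factor and its unit group modulo squares admits an explicit description; this yields $j_\infty=g$, and a direct analysis at the archimedean places gives $\rho_\infty=0$. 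Under (3), a Magma computation for each prime $q\leq 92459$ confirms $\rho_\infty=0$ and $j_\infty=g$ case by case.

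The main obstacle is the local analysis at the prime $2$: the image of $J(\QQ_2)/2J(\QQ_2)$ inside $L_2^\ast/(L_2^\ast)^2$ depends delicately on the factorization of $2$ in $L$, which is only well controlled in special situations such as the inert hypothesis of~(2). The alternative routes in (1) and (3) bypass this issue by rerouting through global class-field-theoretic input or by direct computation, respectively, and clarifying the precise relationship between $\rho_\infty$, $j_\infty$, and the narrow-versus-ordinary class group discrepancy is the key technical content of the proof.
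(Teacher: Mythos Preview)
Your proposal contains a genuine misidentification of the two constants, and this leads you to locate the main difficulty in the wrong place.

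In the paper's setup, \emph{both} $\rho_\infty$ and $j_\infty$ are archimedean invariants: $\rho_\infty=\dim_{\FF_2}\OO_L^{\times,+}/(\OO_L^\times)^2$ measures totally positive units modulo squares, while $j_\infty=\dim_{\FF_2}\bigl(\res_\infty(\OO_L^\times/(\OO_L^\times)^2)\cap J_\infty\bigr)$ measures how the sign-signature image of the global units meets the local Kummer image $J_\infty=\delta_\infty(J(\RR))$. Neither has anything to do with the prime $2$. The bound $\rho_\infty+j_\infty\leq p-1$ comes from the fact that $-1$ has norm $-1$ (odd degree), so $\res_\infty(-1)\notin J_\infty$; and once $\rho_\infty=0$ the sign map on units is surjective onto $\{\pm1\}^p$, forcing $j_\infty=\dim J_\infty=g$ automatically.

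The analysis at the finite primes is in fact the \emph{easy} part. Since $\OO_L=\ZZ[\zeta_q+\zeta_q^{-1}]$, one has $\disc(f)=\disc(\OO_L)$, so $S=\{\infty,2,q\}$. Because $q=2p+1$ with $p$ prime, the order of $2$ modulo $q$ is either $p$ or $2p$, hence $2$ is inert in $L$; and $q$ is totally ramified in $L$. Thus $f$ is irreducible over both $\QQ_2$ and $\QQ_q$, giving $m_2=m_q=1$, whence $G_2=G_q=0$ and the term $\dim\ker(G\to\Cl(L)/2\Cl(L))$ vanishes outright. This holds unconditionally, with no appeal to (1), (2), or (3).

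Your reading of condition (2) is therefore off: the hypothesis ``$2$ is inert in $\QQ(\zeta_p)^+$'' concerns $\QQ(\zeta_p)^+$, not $L=\QQ(\zeta_q)^+$, and it is not a $2$-adic local input to the descent at all. It enters via a theorem of Estes: if $2$ is inert in $\QQ(\zeta_p)^+$, then the class number of $\QQ(\zeta_q)$ is odd; combined with results of Garbanati on circular units this forces $[C_L^+:C_L^2]=1$ and hence $\rho_\infty=0$. Condition (1), the Davis--Taussky conjecture, asserts $C_L^+=C_L^2$ directly (equivalently $h_{\QQ(\zeta_q)}$ odd). Condition (3) is handled by an explicit computation of the sign-signature matrix $M_{\infty,u}$ for the unit $u=\pm(\zeta_q+\zeta_q^{-1})$ and its Galois conjugates, verifying that its column space has dimension $p-1$ for each $q\leq 92459$. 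In all three cases $\rho_\infty=0$ also yields $\rho=\rho^+$ (via the exact sequence relating $\Cl^+(L)$ and $\Cl(L)$, or via Oriat's theorem), allowing $\Cl^+(L)[2]$ to be replaced by $\Cl(L)[2]$.

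In short: the hard work is entirely at the archimedean place, controlling signatures of units, and the three hypotheses are three independent routes to $\rho_\infty=0$; the $2$-adic contribution is disposed of uniformly at the outset.
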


In fact, if the Davis--Taussky conjecture holds (see Remark \ref{rem-davistausskyoddclassnumber}), then the bound of Theorem \ref{thm-sophie-intro} becomes $\dim_{\FF_2}\Sel^{(2)}(\QQ,J) \leq  g$.

The organization of the paper is as follows. In Section \ref{sec-2descent}, we review the method of $2$-descent as implemented by Stoll in \cite{stoll}. In Sections \ref{sec-betterbounds}, \ref{sec-totally}, and \ref{sec-muchbetterbounds}, we specialize the $2$-descent method for the situations we encounter in the rest of the paper, namely the case when $f(x)$ defines a totally real extensions, or cyclic extension of $\QQ$, of prime degree. In Section \ref{sec-genus1}, we give a new proof of Washington's theorem using the method of $2$-descent. In Section \ref{sec-sophie} we provide examples of hyperelliptic curves of genus $g=(p-1)/2$ where $p$ is a Sophie Germain prime, and prove Theorem \ref{thm-sophie-intro}. Finally, in Section \ref{sec-examples}, we illustrate the previous sections with examples of curves, jacobians, and how their ranks compare to the bounds.

\begin{ack}
The authors would like to thank Keith Conrad, G\"urkan Dogan, Franz Lemmermeyer, Paul Pollack, and Barry Smith, for several helpful comments and suggestions. We would also like to express our gratitude to David Dummit for very useful suggestions and noticing an error in an earlier version of the paper. Finally we would like to express our thanks to the referees who have given us helpful suggestions and pointed out some errors in previous versions of this paper.
\end{ack}

\section{Stoll's Implementation of 2-Descent}\label{sec-2descent}
In this section we summarize the method of $2$-descent as implemented by Stoll in \cite{stoll}. The method was first described by Cassels \cite{cassels1}, and by Schaefer \cite{schaefer1}, and Poonen-Schaefer \cite{schaefer2} in more generality. Throughout the rest of this section we will focus on computing the dimension of the 2-Selmer group of the jacobian $J$ of a hyperelliptic curve $C$, given by an affine equation of the form
$$C:y^2=f(x),$$
where $f\in\ZZ[x]$ is square-free and $\deg(f)$ is odd (Stoll also treats the case when $\deg(f)$ is even, but we do not need it for our purposes). In this case, the curve $C$ is of genus $g=(\deg(f) - 1)/2$ with a single point at infinity in the projective closure. Before we can compute the dimension of the 2-Selmer group, we must define a few objects of interest and examine some of their properties. We will follow the notation laid out in \cite{stoll}.

Let $\Sel^{(2)}(\QQ,J)$ be the 2-Selmer group of $J$ over $\QQ$, and let $\Sha(\QQ,J)[2]$ be the $2$-torsion of the Tate-Shafarevich group of $J$ (as defined, for instance, in Section 1 of \cite{stoll}). Selmer and Sha fit in the following fundamental short exact sequence:
$$\xymatrix{
0\ar[r]& J(\QQ)/2J(\QQ)\ar[r]& \Sel^{(2)}(\QQ,J)\ar[r]& \Sha(\QQ,J)[2]\ar[r]& 0.
}$$
With this sequence in hand we get a relationship between for the rank of $J(\QQ)$ and the $\FF_2$-dimensions of the other groups that we defined.
\begin{equation}\label{eq-rank}
\rank_\ZZ J(\QQ) + \dim_{\FF_2} J(\QQ)[2] + \dim_{\FF_2}\Sha(\QQ,J)[2] = \dim_{\FF_2}\Sel^{(2)}(\QQ,J).
\end{equation}
Using equation (\ref{eq-rank}), we get our first upper bound on the rank
\begin{equation}\label{eq-bound}
\rank_\ZZ J(\QQ)\leq \dim_{\FF_2} \Sel^{(2)}(\QQ,J)-\dim_{\FF_2} \Sha(\QQ,J)[2]\leq \dim_{\FF_2} \Sel^{(2)}(\QQ,J).
\end{equation}
This upper bound is {\it computable}, in the sense that $J(\QQ)[2]$ and the Selmer group are computable, as we describe below.

\begin{defn}\label{defn-L}
For any field extension $K$ of $\QQ$ and $f\in \QQ[x]$, let $L_K = K[T] / (f(T))$ denote the algebra defined by $f$ and let $N_K$ denote the norm map from $L_K$ down to $K$.
\end{defn}

We denote $L_K=K[\theta]$, where $\theta$ is the image of $T$ under the reduction map $K[T]\to K[T]/(f(T))$, and $L_K$ is a product of finite extensions of $K$:
$$L_K=L_{K,1}\times\cdots\times L_{K,m_K},$$
where $m_K$ is the number of irreducible factors of $f(x)$ in $K[x]$. Here, the fields $L_{K,j}$ correspond to the irreducible factors of $f(x)$ in $K[x]$, and the map $N_K:L_K\to K$ is just the product of the norms on each component of $L_K$. That is, if $\alpha=(\alpha_1,\alpha_2,\dots,\alpha_{m_K})$, then $N_K(\alpha)=\prod_{i=1}^{m_K}N_{L_{K,i}/K}(\alpha_i)$ where $N_{L_{K,i}/K}:L_{K,i}\to K$ is the usual norm map for the extension of fields $L_{K,i}/K$.

In order to ease notation, we establish the following notational conventions: when $K=\QQ$ we will drop the field from the subscripts altogether, and if $K=\QQ_v$, we will just use the subscript $v$. This convention will apply to \emph{anything} that has a field as a subscript throughout the rest of the paper. As an example, $L_v=\QQ_v[T]/(f(T))$ and $L=\QQ[T]/(f(T))$.

Following standard notational conventions, we let $\OO_K,\ I(K),$ and $\Cl(K)$ denote the ring of integers of $K$, the group of fractional ideals in $K$, and the ideal class group of $K$, respectively. We  define analogous objects for the algebra $L_K$ as products of each component, as follows:
\begin{align*}
\OO_{L_K}&=\OO_{L_{K,1}}\times\cdots\times\OO_{L_{K,m_K}},\\
I(L_K)&=I({L_{K,1}})\times\cdots\times I({L_{K,m_K}}),\\
\Cl(L_K)&=\Cl({L_{K,1}})\times\cdots\times \Cl({L_{K,m_K}}).
\end{align*}

\begin{defn} \label{defn-res}
Let $K$ be a field extension of $\QQ$, and let $L=\QQ[T]/(f(T))$ be as before.
\begin{enumerate} 
\item Let $I_v(L)$ denote the subgroup of $I(L)$ generated, in each component, by fractional ideals in $L_{\QQ,i}$ with support above a prime $v$ in $\ZZ$. For a finite set $S$ of places of $\QQ$, let
$$I_S(L)=\prod_{v\in S\setminus\{\infty\}} I_v(L).$$

\item For any field extension $K$ of $\QQ$, let 
$$H_K = \ker\left( N_K\colon L_K^\times/(L_K^\times)^2\to K^\times/(K^\times)^2 \right).$$
For any place $v$ of $\QQ$, we let $\res_v\colon H\to H_v$ be the canonical restriction map induced by the natural inclusion of fields $\QQ\hookrightarrow \QQ_v$.
\item 
Let $\Div(C)$ denote the group of degree-zero divisors on $C$ with support disjoint from the principal divisor $\div(y)$.
\end{enumerate}
\end{defn}

\begin{remark}
In our case, the curve is given by $C:y^2=f(x)$, and the support of $\div(y)$ is exactly the  points with coordinates $(\alpha,0)$, where $\alpha$ is a root of $f$, and the unique point at infinity. 
\end{remark}
Now for each $K$, there is a homomorphism  
\[
F_K\colon \Div(C)(K)\to L^\times_K, \quad \sum_{P} n_PP\mapsto \prod_P (x(P) - \theta )^{n_P},
\]
and this homomorphism induces a homomorphism $\delta_K\colon J(K) \to H_K$ with kernel $2J(K)$ by \cite[Lemma 4.1]{stoll}. By abusing notation, we also use $\delta_K$ to denote the induced map $J(K)/2J(K) \to H_K$. 

All of these facts, together with some category theory, give us the following characterization of the 2-Selmer group of $J$ over $\QQ$. 
\begin{prop}[{\cite[Prop. ~4.2]{stoll}}]\label{prop-selmer_characterization} 
The $2$-Selmer group of $J$ over $\QQ$ can be identified as follows:
\[
\Sel^{(2)}(\QQ,J) = \{\xi\in H \mid \res_v(\xi) \in\delta_v(J(\QQ_v))\hbox{ for all } v\}.
\]
\end{prop}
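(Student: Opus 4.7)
The plan is to realize $H$ and the local analogues $H_v$ as explicit incarnations of the Galois cohomology groups $\Hc^1(\Gal(\overline{\QQ}/\QQ), J[2])$ and $\Hc^1(\Gal(\overline{\QQ_v}/\QQ_v), J[2])$, realize $\delta_K$ as the Kummer connecting map, and then translate the cohomological definition of the Selmer group into the condition stated in the proposition.

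First, I would recall that the $2$-Selmer group is defined as
\begin{equation*}
\Sel^{(2)}(\QQ,J) = \Ker\!\left(\Hc^1(\GQ,J[2])\arrow \prod_v \Hc^1(\Gal(\overline{\QQ_v}/\QQ_v),J[2])/\im \kappa_v\right),
\end{equation*}
where $\kappa_K\colon J(K)/2J(K)\hookrightarrow \Hc^1(\Gal(\overline{K}/K),J[2])$ is the Kummer connecting map arising from multiplication-by-$2$ on $J$. The main step is therefore an identification $H\cong \Hc^1(\GQ, J[2])$ that is compatible with restriction to places and that carries $\delta_K$ to $\kappa_K$. Since $\deg f$ is odd and $C$ has a unique, rational point at infinity, the set $\mathcal{W}$ of affine Weierstrass points of $C$ is identified with the roots of $f$, and as a $\GQ$-module $J[2]$ is identified with the degree-zero part of $\FF_2[\mathcal{W}]$ generated by the classes $[(\alpha,0)-\infty]$ modulo the relation $\sum_\alpha [(\alpha,0)-\infty]=0$. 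Equivalently, there is a short exact sequence of $\GQ$-modules $0\to J[2]\to \FF_2[\mathcal{W}]\xrightarrow{\deg}\FF_2\to 0$.

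Next I would compute the cohomology of $\FF_2[\mathcal{W}]$ using Shapiro's Lemma: if $f=\prod_j f_j$ is the irreducible factorization in $\QQ[x]$ and $L=\prod_j L_j$ with $L_j=\QQ[T]/(f_j)$, then $\FF_2[\mathcal{W}]$ is the induction of the trivial $\Gal(\overline{\QQ}/L_j)$-module $\FF_2$ summed over $j$, so Shapiro's Lemma combined with Hilbert 90 / Kummer theory gives
\begin{equation*}
\Hc^1(\GQ,\FF_2[\mathcal{W}])\cong \prod_j L_j^\times/(L_j^\times)^2 = L^\times/(L^\times)^2.
\end{equation*}
Running the long exact cohomology sequence associated to $0\to J[2]\to \FF_2[\mathcal{W}]\to\FF_2\to 0$ and noting that the connecting map on $\FF_2$-cohomology is induced by the norm $N\colon L^\times/(L^\times)^2\to \QQ^\times/(\QQ^\times)^2$ (as one sees by writing the Weil pairing in coordinates), one obtains the identification $\Hc^1(\GQ,J[2])\cong H$. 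The analogous identifications hold verbatim over each completion $\QQ_v$, giving $\Hc^1(\Gal(\overline{\QQ_v}/\QQ_v),J[2])\cong H_v$, and the global-to-local restriction maps in cohomology correspond to the maps $\res_v\colon H\to H_v$ by functoriality of Shapiro's Lemma.

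The last step is to verify that $\delta_K$ agrees with the Kummer map $\kappa_K$ under these identifications; this is exactly Lemma 4.1 of \cite{stoll}, which I would reprove by a direct computation: given a class $[D]\in J(K)/2J(K)$ with $D=\sum_P n_P P$ supported away from $\div(y)$, pick a function $g$ on $C_{\overline{K}}$ with $\div(g)=2D$ and compute the cocycle $\sigma\mapsto \sigma(g)/g$; a short manipulation using $y^2=\prod(x-\theta_i)$ shows that the resulting cocycle represents precisely the class of $F_K(D)=\prod_P(x(P)-\theta)^{n_P}$ in $L_K^\times/(L_K^\times)^2$, so $\kappa_K=\delta_K$. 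With these identifications in hand, the Galois-cohomological definition of $\Sel^{(2)}(\QQ,J)$ translates directly into the condition $\res_v(\xi)\in \delta_v(J(\QQ_v))$ for all $v$, which is the statement of the proposition. The main obstacle is the compatibility of $\delta_K$ with the Kummer map and the careful bookkeeping of the norm map under Shapiro's Lemma; once these are in place, everything else is formal.
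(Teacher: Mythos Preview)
The paper does not give its own proof of this proposition; it is quoted verbatim from \cite[Prop.~4.2]{stoll} and the surrounding text simply says that ``all of these facts, together with some category theory, give us the following characterization.'' Your proposal is therefore not competing with any argument in the paper itself.

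That said, what you outline is exactly the argument in the cited sources (Schaefer \cite{schaefer1}, Poonen--Schaefer \cite{schaefer2}, and Stoll \cite{stoll}): identify $J[2]$ with the degree-zero part of $\FF_2[\mathcal{W}]$, use Shapiro's lemma and Kummer theory to compute $\Hc^1$ of the permutation module as $L^\times/(L^\times)^2$, run the long exact sequence for $0\to J[2]\to \FF_2[\mathcal{W}]\to\FF_2\to 0$, and check that $\delta_K$ realizes the Kummer map. Two small points of care: (i) the map $\Hc^1(\GQ,\FF_2[\mathcal{W}])\to \Hc^1(\GQ,\FF_2)$ that you want to identify with the norm is the map \emph{induced} by the degree morphism, not a connecting map, so your phrasing is slightly off; (ii) the injectivity of $\Hc^1(\GQ,J[2])\to \Hc^1(\GQ,\FF_2[\mathcal{W}])$ uses that $\Hc^0(\GQ,\FF_2[\mathcal{W}])\to\Hc^0(\GQ,\FF_2)$ is surjective, which holds precisely because $\deg f$ is odd (some irreducible factor has odd degree). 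With those two remarks incorporated, your sketch is correct and matches the literature.
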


In order to take advantage of this description of the Selmer group, we need some additional facts about the 2-torsion of $J$ and the $\delta_K$ maps.

\begin{lemma}[{\cite[Lemma 4.3]{stoll}}]\label{lem-deltak_maps} Let $K$ be a field extension of $\QQ$.
\begin{enumerate}
\item[\rm (1)] For a point $P\in C(K)$ not in the support of $\div(y)$, $\delta_K(P-\infty) = x(P) - \theta \bmod (L_K^\times)^2.$
\item[\rm (2)] Let $f = f_1\cdots f_{m_K}$ be the factorization of $f$ over $K$ into monic irreducible factors. Then, to every factor $f_j$, we can associate an element $P_j\in J(K)[2]$ such that:
\begin{enumerate}
\item[\rm (i)] The points $\{P_j\}$ generate $J(K)[2]$ and satisfy $\sum_{j=1}^{m_K}P_j = 0.$
\item[\rm (ii)] Let $h_j$ be the polynomial such that $f = f_jh_j$. Then 
$$\delta_K(P_j) = (-1)^{\deg(f_j)}f_j(\theta) + (-1)^{\deg(h_j)}h_j(\theta) \bmod (L_K^\times)^2.$$
\end{enumerate}
\item[\rm (3)] $\dim J(K)[2] = m_K -1.$

\end{enumerate}
\end{lemma}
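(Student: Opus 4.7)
The plan is to handle the three parts in order, with part (2)(ii) carrying the main computational work. Throughout I would use the model $C : y^2 = f(x)$ with $\deg f$ odd, so $\infty$ is a Weierstrass point and $\div(y) = \sum_{\alpha : f(\alpha)=0}(\alpha,0) - (2g+1)\infty$.

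For part (1), the subtle point is that $P-\infty$ is not strictly in the domain of $F_K$, because $\infty$ lies in the support of $\div(y)$. I would replace $\infty$ by a linearly equivalent divisor disjoint from $\div(y)$ and verify, via a local computation at $\infty$ on the smooth projective model (using $1/x$ as a uniformizer), that this replacement contributes a square to $F_K$. The only factor that survives modulo $(L_K^\times)^2$ is $x(P) - \theta$, giving the stated formula.

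For part (2)(i), for each monic irreducible factor $f_j$ of $f$ over $K$, I set
$$P_j := \sum_{\alpha\,:\,f_j(\alpha)=0} \bigl[(\alpha, 0) - \infty\bigr] \in J(\overline{K}).$$
Since the sum runs over a full $\Gal(\overline{K}/K)$-orbit, $P_j$ is $K$-rational, and the identity $\div(x-\alpha) = 2[(\alpha,0)-\infty]$ on the odd-degree model shows each summand is $2$-torsion, so $P_j \in J(K)[2]$. Summing over $j$ gives $\sum_j P_j = \sum_{\alpha\,:\,f(\alpha)=0}[(\alpha,0)-\infty] = [\div(y)] = 0$ in $J$. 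For part (3), it is classical that $J(\overline{K})[2]$ is spanned by the classes $[(\alpha_i,0)-\infty]$ subject only to $\sum_i[(\alpha_i,0)-\infty] = 0$, giving $\FF_2^{2g}$. Taking $\Gal(\overline{K}/K)$-invariants collapses these to the $P_j$ with the single relation $\sum_j P_j = 0$, so $\dim_{\FF_2} J(K)[2] = m_K - 1$.

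The hardest part is (2)(ii). The difficulty is that $P_j$ is supported on Weierstrass points, so $F_K$ is not directly defined on it; worse, the naive value $f_j(\theta)$ vanishes in the $j$-th component of $L_K = \prod_i L_{K,i}$, since $f_j(\theta_j) = 0$. My plan is to exhibit an explicit representative $D_j \sim P_j$ with support disjoint from $\div(y)$ and compute $F_K(D_j)$ componentwise. In components $i \ne j$, where $f_j(\theta_i) \ne 0$, a small perturbation of the original representative evaluates to $f_j(\theta_i)$ up to sign and squares. In the $j$-th component, I switch to the equivalent representative $P_j \sim -\sum_{h_j(\alpha)=0}[(\alpha,0)-\infty]$ obtained from the relation $[\div(y)]=0$, which evaluates to $h_j(\theta_j)$ up to sign and squares. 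The sign factors $(-1)^{\deg f_j}$ and $(-1)^{\deg h_j}$ come from the multiplicity of $\infty$ in each representative together with the local behavior at infinity. The additive notation in the statement is a compact way to encode this componentwise patching: on component $i \ne j$ the $f_j(\theta_i)$ term is the nonzero contribution, while on component $j$ the $f_j(\theta_j) = 0$ term drops out and the $h_j(\theta_j)$ term supplies the value. Verifying that the two choices agree modulo $(L_K^\times)^2$ in their overlap is the essential content of the lemma, and I would follow Stoll's explicit construction of the auxiliary rational function used to move the representative to finish this verification.
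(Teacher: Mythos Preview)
The paper does not supply its own proof of this lemma: it is quoted verbatim from Stoll with the citation \cite[Lemma 4.3]{stoll} and no argument is given. So there is nothing in the present paper to compare your proposal against.

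That said, your outline is substantively correct and matches the standard proof one finds in Stoll and in Schaefer's earlier papers. A few remarks. Your reading of the additive formula in (2)(ii) as a componentwise patching is exactly right: since $L_K \cong \prod_i K[T]/(f_i(T))$, one has $f_j(\theta_i)=0$ precisely when $i=j$ and $h_j(\theta_i)=0$ precisely when $i\neq j$, so the two summands have disjoint supports and the ``sum'' is really a gluing of the value $(-1)^{\deg f_j}f_j(\theta_i)$ on components $i\neq j$ with $(-1)^{\deg h_j}h_j(\theta_j)$ on component $j$. Your mechanism for obtaining the $j$-th component value via the relation $P_j = -\sum_{i\neq j} P_i$ is also the standard one. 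The part you leave most vague is the sign bookkeeping: the factors $(-1)^{\deg f_j}$ and $(-1)^{\deg h_j}$ arise because $F_K$ applied to a divisor of the shape $\sum_\alpha[(\alpha,0)] - n\cdot\infty$ picks up $(-1)^n$ from the leading-coefficient normalization of $\prod_\alpha(x-\alpha-\theta)$ evaluated against the local contribution at $\infty$; if you write this up fully you should make that step explicit rather than gesturing at ``local behavior at infinity.'' For part (3), your argument that $J(K)[2]$ is the Galois-invariants of the $\FF_2$-permutation module on the Weierstrass points modulo the diagonal is correct, but note that in general taking invariants of a quotient is not the same as the quotient of invariants; here it works because the relevant $H^1$ vanishes (the module is a permutation module), and you might want to say one word about that.
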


\begin{defn}
Let $I_v = \ker(N\colon I_v(L)/I_v(L)^2 \to I(\QQ)/I(\QQ)^2)$ and let $\val_v\colon H_v\to I_v$ be the map induced by the valuations on each component of $L_v$. Considering all primes at once, we get another map $\val\colon H \to I(L)/I(L)^2$. More specifically, the $\val$ map is the product of $\val_v(\res_v)$ over all places $v$. 
\end{defn}

Next, the following lemma helps us compute the dimensions of various groups when $K$ is a local field or $\RR$.

\begin{lemma}[{\cite[Lemma 4.4]{stoll}}]\label{lem-local_dims}
Let $K$ be a $v$-adic local field, and let $d_K=[K:\QQ_2]$ if $v=2$ and $d_K = 0$ if $v$ is odd. Then
\begin{enumerate}
\item[\rm (1)] $\dim J(K)/2J(K) = \dim J(K)[2] + d_Kg =m_K-1+d_Kg.$
\item[\rm (2)] $\dim H_K = 2\dim J(K)/2J(K) = 2(m_K-1+d_Kg)$.
\item[\rm (3)] $\dim I_K = m_K - 1.$
\end{enumerate}
\end{lemma}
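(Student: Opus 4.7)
My plan is to treat the three parts in order, using standard structural results about local fields and local points of abelian varieties, with Lemma \ref{lem-deltak_maps}(3) supplying the identification $\dim J(K)[2]=m_K-1$.

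For part (1), I would invoke the classical local index formula for an abelian variety $A$ of dimension $g$ over a finite extension $K/\QQ_v$: for any nonzero integer $n$,
\[
\#A(K)/nA(K) \;=\; \#A(K)[n]\cdot |n|_K^{-g}.
\]
This rests on the fact (Mattuck) that a finite-index subgroup of $A(K)$ is isomorphic to $\OO_K^g$ as a topological group. Apply this to $J/K$ with $n=2$: when $v$ is odd we have $|2|_K=1$, while for $v=2$ we have $|2|_K=2^{-d_K}$. Taking $\FF_2$-dimensions yields $\dim J(K)/2J(K)=\dim J(K)[2]+d_Kg$. Combining with Lemma \ref{lem-deltak_maps}(3) gives $\dim J(K)/2J(K)=m_K-1+d_Kg$.

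For part (2), the key point is surjectivity of the norm $N_K\colon L_K^\times/(L_K^\times)^2 \to K^\times/(K^\times)^2$. Write $f=f_1\cdots f_{m_K}$ over $K$; since $\deg f=2g+1$ is odd, at least one factor $f_j$ has odd degree $d_j=[L_{K,j}:K]$. On this factor, the composition $K^\times\hookrightarrow L_{K,j}^\times\xrightarrow{N_{L_{K,j}/K}}K^\times$ is $x\mapsto x^{d_j}$, which is the identity modulo squares, so $N_{L_{K,j}/K}$ is surjective modulo squares, and hence so is $N_K$. Using the short exact sequence defining $H_K$, it remains to compute $\dim L_K^\times/(L_K^\times)^2 - \dim K^\times/(K^\times)^2$. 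For any finite extension $F/\QQ_v$ one has
\[
\dim_{\FF_2} F^\times/(F^\times)^2 \;=\; \begin{cases}2, & v\text{ odd},\\ 2+[F:\QQ_2], & v=2,\end{cases}
\]
by the standard structure theorem $F^\times\cong \ZZ\times \mu(F)\times \OO_F$. Summing over the $m_K$ components of $L_K$, using $[L_K:\QQ_2]=(2g+1)d_K$ in the wild case, and subtracting the corresponding count for $K$, yields $\dim H_K = 2(m_K-1)+2gd_K = 2(m_K-1+d_Kg)$, as desired.

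For part (3), since $K$ is local, each $L_{K,i}$ is local, so $I_v(L_{K,i})\cong \ZZ$ is generated by the unique prime of $L_{K,i}$, and $I_v(L_K)/I_v(L_K)^2\cong \FF_2^{m_K}$ with $I_v(K)/I_v(K)^2\cong \FF_2$. Under these identifications the norm map is $(e_1,\ldots,e_{m_K})\mapsto \sum_i f_i e_i\pmod 2$, where $f_i$ is the residue degree of $L_{K,i}/K$. Because $\sum_i e_if_i=[L_K:K]=2g+1$ is odd, some $f_i$ is odd, and the norm is surjective, giving $\dim I_K=m_K-1$.

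The main obstacle is the dimension count in part (2) at $v=2$: one must pair the class field theoretic computation of $\dim_{\FF_2}F^\times/(F^\times)^2$ for each local factor with the surjectivity argument, and check that the arithmetic $(2g+1)d_K - d_K = 2gd_K$ produces exactly the advertised correction term. The other two parts are then essentially bookkeeping.
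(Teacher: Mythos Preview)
Your argument is correct in all three parts. Note, however, that the paper does not supply its own proof of this lemma: it is quoted verbatim as \cite[Lemma~4.4]{stoll} and used as a black box, so there is no in-paper proof to compare against. What you have written is a faithful reconstruction of the standard argument (Mattuck's theorem for part~(1), the odd-degree factor trick plus the structure of $F^\times/(F^\times)^2$ for a local field $F$ in part~(2), and the ideal-norm computation in part~(3)), which is essentially what one finds in Stoll's paper. One cosmetic point: in part~(3) you use $e_i$ first as the coordinates in $\FF_2^{m_K}$ and then as the ramification indices in $\sum_i e_i f_i=[L_K:K]$; renaming one of these would avoid the clash.
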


With all of this machinery the description of $\Sel^{(2)}(\QQ,J)$ given in Proposition \ref{prop-selmer_characterization} can be refined as follows.

\begin{prop}[{\cite[Cor. ~4.7]{stoll}}]\label{prop-selmer_recharacterization}
Let $S = \{\infty,2\}\cup\{v : v^2 \hbox{ divides } \disc(f) \}$. Then 
\[
\Sel^{(2)}(\QQ,J) = \{ \xi\in H \mid \val(\xi)\in I_S(L)/I_S(L)^2,\  \res_v(\xi)\in \delta_v(J(\QQ_v))\hbox{ for all } v\in S\}.
\]
\end{prop}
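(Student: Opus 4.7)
The plan is to start from the characterization in Proposition \ref{prop-selmer_characterization} and show that, for every finite place $v \notin S$, the local condition $\res_v(\xi)\in \delta_v(J(\QQ_v))$ is equivalent to the single condition $\val_v(\res_v(\xi))=0$ in $I_v$. Once that is done, the ``for all $v$'' quantifier in Proposition \ref{prop-selmer_characterization} splits into the $v\in S$ conditions (kept as stated) and the conjunction of $\val_v(\res_v(\xi))=0$ for all finite $v\notin S$, and the latter is exactly the statement $\val(\xi)\in I_S(L)/I_S(L)^2$. Thus the whole proof reduces to the local identity
\[
\delta_v(J(\QQ_v)) \;=\; \ker\!\bigl(\val_v\colon H_v\to I_v\bigr) \qquad \text{for every } v\notin S.
\]

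To prove the inclusion $\delta_v(J(\QQ_v))\subseteq \ker(\val_v)$, I would use Lemma \ref{lem-deltak_maps}(1) to represent a typical class in the image as $\prod_P(x(P)-\theta)^{n_P}$ for a degree-zero divisor supported away from $\div(y)$, and then analyze the valuation $\val_\wP(x(P)-\theta)$ at each prime $\wP$ of $\OO_L$ above $v$. The assumptions that $v$ is odd and $v^2\nmid\disc(f)$ bound how badly $f$ can factor modulo $v$: repeated factors occur only tamely. Using this, and replacing the divisor by an equivalent one supported on points whose $x$-coordinates reduce to non-singular points on each component of the reduction, one checks that every $\val_\wP(x(P)-\theta)$ is even, i.e.\ the class is killed by $\val_v$. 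This is the standard ``unramified local image'' lemma specialized to Stoll's setting.

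For the reverse inclusion I would count dimensions over $\FF_2$. By Lemma \ref{lem-local_dims}(1), with $d_K=0$ since $v$ is odd, $\dim\delta_v(J(\QQ_v)) = m_v-1$; by Lemma \ref{lem-local_dims}(3), $\dim I_v = m_v-1$. One checks that $\val_v\colon H_v\to I_v$ is surjective: given any target class, one lifts it to $\alpha\in L_v^\times$ with the prescribed valuations at each component, and then rescales by an element of $\QQ_v^\times$ to adjust $N_v(\alpha)$ to a square (possible since $v$ is odd and the degrees of the factors of $f$ over $\QQ_v$ determine the parity of the valuation of the norm). Then Lemma \ref{lem-local_dims}(2) gives $\dim\ker(\val_v) = \dim H_v - \dim I_v = 2(m_v-1)-(m_v-1) = m_v-1$. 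Together with the containment just shown and the equality of dimensions, we conclude $\delta_v(J(\QQ_v)) = \ker(\val_v)$.

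The main obstacle is the first inclusion, specifically at primes $v$ dividing $\disc(f)$ exactly to the first power, where the hyperelliptic model is not smooth but only mildly singular. Getting from ``$v\nmid\disc(f)$'' (pure good reduction, where the argument is routine) to ``$v^2\nmid\disc(f)$'' requires the careful local analysis of $x(P)-\theta$ sketched above; this is exactly where the definition of $S$ enters the proof. With that in hand, the combinatorial assembly of the global statement from the local equalities is immediate from Proposition \ref{prop-selmer_characterization}.
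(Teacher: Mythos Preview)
The paper does not supply its own proof of this proposition: it is quoted verbatim as \cite[Cor.~4.7]{stoll} and used as a black box. So there is nothing in the present paper to compare your argument against.

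That said, your outline is correct and is essentially the route Stoll takes in \cite{stoll}. The reduction to the local identity $\delta_v(J(\QQ_v))=\ker(\val_v\colon H_v\to I_v)$ for finite $v\notin S$ is exactly the content of \cite[Lemmas~4.5 and~4.6]{stoll}, and the assembly into the global statement via Proposition~\ref{prop-selmer_characterization} is immediate once that local identity is known. Your dimension count for the reverse inclusion is the standard one; the surjectivity of $\val_v\colon H_v\to I_v$ that you need follows cleanly from the snake lemma applied to the norm map between the short exact sequences $1\to \OO_{L_v}^\times/(\OO_{L_v}^\times)^2\to L_v^\times/(L_v^\times)^2\to I_v(L)/I_v(L)^2\to 1$ and its analogue over $\QQ_v$, using that $\deg f$ is odd to get surjectivity of the norm on units modulo squares. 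You have also correctly located the only genuinely delicate point: the inclusion $\delta_v(J(\QQ_v))\subseteq\ker(\val_v)$ at odd primes with $v\mid\disc(f)$ but $v^2\nmid\disc(f)$, which is precisely \cite[Lemma~4.6]{stoll} and requires the careful local analysis you describe.
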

This new characterization suggests the following method to compute $\Sel^{(2)}(\QQ,J)$:

\begin{itemize}
\item[\bf S1:] Find the set $S$.
\item[\bf S2:] For each $v\in S$, determine $J_v = \delta_v(J(\QQ_v))\subseteq H_v.$
\item[\bf S3:] Find a basis for a suitable finite subgroup $\widetilde{H} \subseteq L^\times/(L^\times)^2$ such that $\Sel^{(2)}(\QQ,J)\subseteq \widetilde{H}.$
\item[\bf S4:] Compute $\Sel^{(2)}(\QQ,J)$ as the inverse image of $\prod_{v\in S} J_v$ under \[\prod_{v\in S} \res_v\colon \widetilde{H}\to \prod_{v\in S} H_v.\]
\end{itemize}

Ignoring any complications that arise from computing and factoring the discriminant of $f$, we focus on steps 2 and 3. We omit the details of how to carry out step 4, since we are only interested in an upper bound for the $\FF_2$-dimension of $\Sel^{(2)}(J,\QQ)$. Step 2 can be broken down into three substeps:
\begin{itemize}
\item[\bf S2.1:] For all $v\in S\setminus\{\infty\}$, compute $J_v = \delta_v(J(\QQ_v))$ and its image $G_v = \val_v(J_v)$ in $I_v$.
\item[\bf S2.2:] If $G_v = 0$ for some $v$, with $v$ odd, remove $v$ from $S$. 
\item[\bf S2.3:] Compute $J_\infty$. 
\end{itemize}

To complete step 2.3, we need the following lemma.

\begin{lemma}[{\cite[Lemma 4.8]{stoll}}]\label{lem-Real_dims} With notation as above:
\begin{enumerate}
\item[\rm (1)] $\dim J(\RR)/2J(\RR) = m_\infty-1 - g.$
\item[\rm (2)] $J_\infty$ is generated by $\{\delta_\infty(P-\infty) : P\in C(\RR)\} $.
\item[\rm (3)] The value of $\delta_\infty(P-\infty)$ only depends on the connected component of $C(\RR)$ containing $P$. 
\end{enumerate}
\end{lemma}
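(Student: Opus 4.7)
The plan is to treat $J(\RR)$ as a real Lie group whose identity component is a compact real torus and analyze the topological Abel--Jacobi map $C(\RR) \to J(\RR)$, then bootstrap from the $2$-torsion count in Lemma~\ref{lem-deltak_maps}(3) for the dimension formulas.

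For part (1), I would start from the short exact sequence of real Lie groups
\[
0 \to J(\RR)^0 \to J(\RR) \to \pi_0(J(\RR)) \to 0,
\]
where $J(\RR)^0$ is the identity component, a compact connected commutative real Lie group of dimension $g$, hence a real torus $(\RR/\ZZ)^g$. Applying the snake lemma to multiplication by $2$, using that it is surjective on $J(\RR)^0$ (tori are divisible) and that $\pi_0(J(\RR))$ is $2$-torsion (since $J(\RR) \cong (\RR/\ZZ)^g \times (\ZZ/2\ZZ)^s$ topologically), yields $2J(\RR) = J(\RR)^0$, whence $J(\RR)/2J(\RR) \cong \pi_0(J(\RR))$ and $J(\RR)[2] \cong J(\RR)^0[2] \oplus \pi_0(J(\RR))$ with $J(\RR)^0[2] \cong (\ZZ/2\ZZ)^g$. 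Lemma~\ref{lem-deltak_maps}(3) gives $\dim_{\FF_2} J(\RR)[2] = m_\infty - 1$, hence $\dim_{\FF_2} J(\RR)/2J(\RR) = m_\infty - 1 - g$.

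Part (3) then follows at once, since $\delta_\infty$ factors through the discrete group $J(\RR)/2J(\RR) \cong \pi_0(J(\RR))$, while $P \mapsto P - \infty$ is continuous from $C(\RR)$ to $J(\RR)$, so $P \mapsto \delta_\infty(P - \infty)$ is a continuous map from $C(\RR)$ into a discrete target and is therefore constant on each connected component. For part (2), since $\ker(\delta_\infty) = 2J(\RR)$, it suffices to show that the classes $[P-\infty]$ for $P \in C(\RR)$ generate $\pi_0(J(\RR))$. By part (3) these classes depend only on the component of $C(\RR)$ containing $P$; labeling the connected components as $C_0, C_1, \ldots, C_{r-1}$ with $\infty \in C_0$, the standard oval count for real hyperelliptic curves of odd degree gives $r - 1 = m_\infty - 1 - g = \dim_{\FF_2} \pi_0(J(\RR))$. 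By dimension count, it then suffices to show that the $r - 1$ classes $[P_i - \infty]$ for representatives $P_i \in C_i$ with $i \geq 1$ are linearly independent in $\pi_0(J(\RR))$.

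The main obstacle is this final linear-independence assertion; I would either invoke the classical theorem of Comessati (in the modern formulation of Gross--Harris) describing $\pi_0(J(\RR))$ for a real algebraic curve via its component group, or give a direct self-contained argument by pairing $\pi_0(J(\RR))$ with a suitable $\FF_2$-cohomology group of $C(\RR)$ through intersection-style duality, checking that each Abel--Jacobi class $[P_i - \infty]$ pairs non-trivially with the fundamental $\FF_2$-class of the oval $C_i$.
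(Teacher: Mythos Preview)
The paper does not prove this lemma at all; it is quoted verbatim from Stoll with a citation and no argument. So there is nothing in the paper to compare against, and the question is only whether your proposal stands on its own.

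Your arguments for (1) and (3) are correct and standard. For (2), your reduction to linear independence of the classes $[P_i-\infty]$ is also correct, but you are making the final step harder than it needs to be. You do not need Comessatti, Gross--Harris, or any intersection pairing: since $\delta_\infty$ is injective on $J(\RR)/2J(\RR)$, it is enough to check independence of the \emph{images} $\delta_\infty(P_i-\infty)$ in $L_\infty^\times/(L_\infty^\times)^2$, and these are completely explicit. Writing $r_1$ for the number of real roots of $f$ and $r_2$ for the number of complex-conjugate pairs, one has $L_\infty\cong\RR^{r_1}\times\CC^{r_2}$, hence $L_\infty^\times/(L_\infty^\times)^2\cong\{\pm1\}^{r_1}$, and by Lemma~\ref{lem-deltak_maps}(1) the image of $P-\infty$ is the sign vector $(\sgn(x(P)-\alpha_j))_{j=1}^{r_1}$ for the ordered real roots $\alpha_1<\cdots<\alpha_{r_1}$. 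The bounded ovals of $C(\RR)$ sit over the intervals $(\alpha_{r_1-2k},\alpha_{r_1-2k+1})$ for $k=1,\dots,(r_1-1)/2$, so a representative $P_k$ on the $k$-th oval produces the vector with $r_1-2k$ plus signs followed by $2k$ minus signs. These $(r_1-1)/2$ vectors are visibly independent in $\{\pm1\}^{r_1}$ (the leftmost minus sign occurs in a different coordinate for each $k$). Since $m_\infty=r_1+r_2$ and $2g+1=r_1+2r_2$ give $(r_1-1)/2=m_\infty-1-g$, this matches the dimension from part (1) and finishes (2) with no external input. I would replace your last paragraph with this computation.
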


Next, for step 3, we see that if we let 
$$ G = \prod_{v\in S\setminus\{ \infty \}} G_v \subseteq I(L)/I(L)^2,$$ then the group 
$\{\xi\in H : \val(\xi)\in G\}$
contains $\Sel^{(2)}(\QQ,J)$. In fact, the larger group $\widetilde{H} = \{\xi\in L^\times/(L^\times)^2 :  \val(\xi)\in G \}$ also contains the 2-Selmer group and we can compute its basis using the following two steps.
\begin{itemize}
\item[\bf S3.1:] Find a basis of $V = \ker(\val\colon L^\times/(L^\times)^2 \to I(L)/I(L)^2)$.
\item[\bf S3.2:] Enlarge this basis to get a basis of $\widetilde{H} = \val^{-1}(G)$. 
\end{itemize}
With notation as above, Stoll deduces an upper bound and a formula for the $\FF_2$-dimension of the 2-Selmer group (see Lemma 4.10 and the discussion under Step 4), as follows.
\begin{prop}[{\cite[Lemma 4.10]{stoll}}]\label{prop-Sel2_UB}\label{stollbnd} With notation as above,
\[
\dim_{\FF_2}\Sel^{(2)}(\QQ,J)\leq (m_\infty-1) + \dim_{\FF_2}(\Cl(L)[2]) + \dim_{\FF_2}\ker\big(G\to\Cl(L)/2\Cl(L)\big).
\]
\end{prop}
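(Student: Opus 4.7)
The plan is to exploit the containment $\Sel^{(2)}(\QQ,J)\subseteq \widetilde{H}$ from Step 3, supplement it with the Selmer condition at the infinite place, and then carry out an $\FF_2$-dimension count using two short exact sequences.

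First, I would analyze the structure of $\widetilde{H}=\val^{-1}(G)\subseteq L^\times/(L^\times)^2$ by splitting via the valuation map into the sequence
\[
0\longrightarrow V\longrightarrow \widetilde{H}\xrightarrow{\;\val\;} \val(\widetilde{H})\longrightarrow 0,
\]
where $V=\ker(\val\colon L^\times/(L^\times)^2\to I(L)/I(L)^2)$. An ideal class $g\in G$ lies in $\val(\widetilde{H})$ precisely when some representative ideal becomes principal modulo squares, which holds if and only if $g$ maps to zero in $\Cl(L)/2\Cl(L)$. Hence $\val(\widetilde H)=\ker(G\to\Cl(L)/2\Cl(L))$. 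For $V$, sending $\xi\in V$ (which satisfies $(\xi)=\mathfrak{a}^2$ for some fractional ideal $\mathfrak{a}$) to $[\mathfrak{a}]\in\Cl(L)[2]$ gives a surjection with kernel $\OO_L^\times/(\OO_L^\times)^2$, yielding
\[
0\longrightarrow \OO_L^\times/(\OO_L^\times)^2\longrightarrow V\longrightarrow \Cl(L)[2]\longrightarrow 0.
\]
Concatenating, we obtain
\[
\dim_{\FF_2}\widetilde{H}=\dim_{\FF_2}\!\bigl(\OO_L^\times/(\OO_L^\times)^2\bigr)+\dim_{\FF_2}\Cl(L)[2]+\dim_{\FF_2}\ker\bigl(G\to\Cl(L)/2\Cl(L)\bigr).
\]

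The remaining task is to trim the unit contribution down from $\dim_{\FF_2}(\OO_L^\times/(\OO_L^\times)^2)$ to $m_\infty-1$. This is where I would invoke the Selmer condition at the infinite place together with the fact that $\Sel^{(2)}(\QQ,J)\subseteq H$ (the norm-trivial subgroup). The restriction map $\res_\infty\colon L^\times/(L^\times)^2\to L_\infty^\times/(L_\infty^\times)^2$ sends units to their sign vectors across the real factors of $L_\RR=\RR^{r_1}\times\CC^{r_2}$; the image sits in $H_\infty$, and the additional requirement that $\res_\infty(\xi)$ land in $J_\infty$, combined with Lemma \ref{lem-Real_dims} (giving $\dim_{\FF_2} J_\infty=m_\infty-1-g$), should cut the unit contribution down to $m_\infty-1$ via a dimension count using Dirichlet's unit theorem applied componentwise to the \'etale algebra $L$.

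The main obstacle will be this last dimension count: tracking how the restriction to $H$ and the Selmer condition at $\infty$ interact with the image of the unit group mod squares inside $H_\infty$, and verifying that the excess dimension above $m_\infty-1$ is always killed by these constraints. Once this is established, the three summands $(m_\infty-1)$, $\dim_{\FF_2}\Cl(L)[2]$, and $\dim_{\FF_2}\ker(G\to\Cl(L)/2\Cl(L))$ assemble into the claimed upper bound on $\dim_{\FF_2}\Sel^{(2)}(\QQ,J)$.
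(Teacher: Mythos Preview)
Your decomposition of $\widetilde{H}$ via the two short exact sequences is exactly right and matches Stoll's argument: you obtain
\[
\dim_{\FF_2}\widetilde{H}=m_\infty+\dim_{\FF_2}\Cl(L)[2]+\dim_{\FF_2}\ker\bigl(G\to\Cl(L)/2\Cl(L)\bigr),
\]
where $m_\infty=\dim_{\FF_2}\bigl(\OO_L^\times/(\OO_L^\times)^2\bigr)$ by Dirichlet's theorem applied componentwise to the \'etale algebra $L$.

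Where you overshoot is in the last step. You propose to invoke the Selmer condition at the archimedean place (i.e.\ $\res_\infty(\xi)\in J_\infty$) to trim the unit contribution from $m_\infty$ down to $m_\infty-1$, and you flag this as the ``main obstacle''. For Stoll's bound this is unnecessary: the containment $\Sel^{(2)}(\QQ,J)\subseteq H$ (the norm condition, which you mention but then set aside) already does the work. Since $\deg f$ is odd in this setting, $N(-1)=(-1)^{\deg f}=-1$ is nontrivial in $\QQ^\times/(\QQ^\times)^2$; thus $-1\in\widetilde{H}$ (it is a unit, so $\val(-1)=0\in G$) but $-1\notin H$. Hence $\Sel^{(2)}(\QQ,J)\subseteq\widetilde{H}\cap H\subsetneq\widetilde{H}$, and the codimension-one drop gives the bound at once. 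There is no obstacle.

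The route you sketch via $J_\infty$ is precisely what the paper carries out in Section~\ref{sec-betterbounds} to obtain the \emph{refined} bound of Proposition~\ref{prop1}, in which $m_\infty-1$ is replaced by $j_\infty+\rho_\infty\leq m_\infty-1$. That does require the more delicate signature analysis you anticipated, but it is not needed for Proposition~\ref{prop-Sel2_UB} as stated.
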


In the next section, we modify the proof of the bound in Proposition \ref{prop-Sel2_UB} to allow for extra conditions at infinity, before we specialize to totally real, and  cyclic extensions.

\subsection{About the proof of Proposition \ref{prop-Sel2_UB}}\label{sec-betterbounds}

The following commutative diagrams helps understand where the Selmer group fits:
$$\xymatrix{
J(\QQ)/2J(\QQ)\ar@{^{(}->}[rrr]^{\delta} \ar[d] & & & H\ar[rrr]^{\val}\ar[d]^{\prod_v{\rm res}_v}& & &I(L)/I(L)^2 \ar[d] \\
\prod_v J(\QQ_v)/2J(\QQ_v)\ar@{^{(}->}[rrr]^{\prod_v \delta_v} & & &\prod_v H_v\ar[rrr]^{\prod_v \val_v} & & &\prod_v I_v(L)/I_v(L)^2 
} $$
The $2$-Selmer group of $J$ over $\QQ$ is then given, as in Prop. \ref{prop-selmer_recharacterization}, by
\[
\Sel^{(2)}(\QQ,J) = \{ \xi\in H : \val(\xi)\in I_S(L)/I_S(L)^2,\  \res_v(\xi)\in \delta_v(J(\QQ_v))\hbox{ for all } v\in S\}.
\]
The Selmer group is thus contained in $H\subseteq L^\times/(L^\times)^2$, and more precisely,
$$\Sel^{(2)}(\QQ,J)\subseteq \{ \xi \in H : \val(\xi) \in G,\ \text{res}_\infty(\xi) \in J_\infty \}$$
where $J_\infty=\delta_\infty(J(\RR))$, the group $ G$ is the product $ \prod_{v\in S\setminus\{ \infty \}} G_v \subseteq I(L)/I(L)^2,$ and recall that $H$ is the kernel of the norm map from $L^\times/(L^\times)^2$ down to $\QQ^\times/(\QQ^\times)^2$. Thus, $\Sel^{(2)}(\QQ,J)$ is contained in the larger group
$$\widehat{H}= \{ \xi \in L^\times/(L^\times)^2 : \val(\xi) \in G,\ \text{res}_\infty(\xi) \in J_\infty \}.$$
We emphasize here that the definition of $\widetilde{H}$ in \cite{stoll} does not impose a condition at $\infty$, but the definition of $\widehat{H}$ does to improve the bounds accuracy (thus $\widehat{H}\subseteq \widetilde{H}$). In an attempt to simplify notation, let $L_{J_\infty}$ be the subspace of $L^\times/(L^\times)^2$ with a condition added at infinite primes by $L_{J_\infty} = L^\times/(L^\times)^2 \cap \res_\infty^{-1}(J_\infty)$. Thus, $\widehat{H}=\{ \xi \in L_{J_\infty} : \text{val}(\xi) \in G\},$
and $\Sel^{(2)}(\QQ,J)\subseteq \widehat{H}$. 
\begin{remark} Note that $\widehat{H}$ is the largest subgroup of $L_{J_\infty}$ such that 
	$$\val(\widehat{H})\cong  G\cap \text{val}\left(L_{J_\infty}\right).$$
Let us show that indeed $\val(\widehat{H})\cong  G\cap \text{val}\left(L_{J_\infty}\right)$. Indeed:
\begin{itemize}
\item Suppose $\xi\in \widehat{H}$ and consider $\text{val}(\xi)$. By definition, since  $\xi\in \widehat{H}$, we have that $\text{val}(\xi)$ is in $G$, and $\xi \in L_{J_\infty}$, thus $\text{val}(\xi)\in \text{val}(L_{J_\infty})$. Hence, $\val(\xi)\in G\cap   \text{val}\left(L_{J_\infty}\right)$.
\item Conversely, suppose $g \in G\cap \text{val}\left(L_{J_\infty}\right)$. Then, there is some $\xi \in L_{J_\infty}$ such that $\text{val}(\xi)=g$. In particular, $\text{res}_\infty(\xi) \in J_\infty$ and since $\text{val}(\xi)=g\in G$, it follows that $\xi \in \widehat{H}$. Hence, $\text{val}(\xi)\in \text{val}(\widehat{H})$.
\end{itemize}
Also, let us show that $\widehat{H}$ is the largest subgroup of $L_{J_\infty}$ such that 
$\val(\widehat{H})\cong  G\cap \text{val}\left(L_{J_\infty}\right).$ Suppose that $\xi \in L_{J_\infty}$ and $\text{val}(\xi) \in G\cap \text{val}\left(L_{J_\infty}\right)$. Then, $\xi \in L_{J_\infty}$ and $\text{val}(\xi)\in G$, so $\xi \in \widehat{H}$ by definition.
\end{remark}

Next, we define subspaces $V$ and $W$ of $L^\times/(L^\times)^2$ as follows:
\begin{itemize}
\item Let $\{\xi_i\}_{i=1}^r$ be generators of $G\cap \text{val}\left(L_{J_\infty}\right)$ with $\dim(G\cap \text{val}\left(L_{J_\infty}\right))=r$, and for each $1\leq i\leq r$ pick one $\mu_i\in L_{J_\infty}$ such that $\val(\mu_i)=\xi_i$. Let $W$ be the subspace generated by $\{\mu_i\}_{i=1}^r$. Note that $W\subseteq L_{J_\infty} \subseteq L^\times/(L^\times)^2$. In particular, $\res_\infty(w)\in J_\infty$ for all $w\in W$. Moreover, $W$ and $\val(W)$ are isomorphic by construction, so
$$W\cong \val(W) = G\cap \text{val}\left(L_{J_\infty}\right) \subseteq G\cap \text{val}\left(L^\times/(L^\times)^2\right)=\ker(G\to \Cl(L)/2\Cl(L)).$$
Thus, $r=\dim(W)=\dim(G\cap \text{val}\left(L_{J_\infty}\right))\leq  \dim \ker(G\to \Cl(L)/2\Cl(L)).$

\item Next, let us write $V=\ker(\val\colon L_{J_\infty} \to I(L)/I(L)^2)$. It follows that $\widehat{H} = V \oplus W$ (note that $\val(V)$ is trivial, while $\val(w)$ is non-trivial for every $w\neq 0$ in $W$).
\end{itemize}

\begin{lemma}
 Let $V=\ker(\val\colon L_{J_\infty} \to I(L)/I(L)^2)$, let $U=\left(\OO_L^\times/(\OO_L^\times)^2\right)\cap L_{J_\infty}$, and let $\Cl(L_{J_\infty})$ be the class group defined as follows $ \Cl(L_{J_\infty})=I(L)/P(L_{J_\infty})$, where $I(L)$ is the group of fractional ideals of $L$, and $P(L_{J_\infty})$ is the group of principal fractional ideals $\mathfrak{A}=(\alpha)$ with a generator such that $\res_\infty(\alpha)\in J_\infty$.  Then, there is an exact sequence:
 $$0 \mapsto U  \to V \to \Cl(L_{J_\infty})[2] \to 0.$$
\end{lemma}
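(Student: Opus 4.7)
The plan is to imitate the classical construction giving
$$0 \to \OO_L^\times/(\OO_L^\times)^2 \to \ker\!\bigl(\val\colon L^\times/(L^\times)^2 \to I(L)/I(L)^2\bigr) \to \Cl(L)[2] \to 0,$$
while keeping track of the condition at the archimedean place encoded by $L_{J_\infty}$ and $P(L_{J_\infty})$.

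First I would define the boundary map $\phi\colon V \to \Cl(L_{J_\infty})[2]$ by $\phi(\bar\alpha) = [\mathfrak{A}]$, where $\mathfrak{A}$ is the unique fractional ideal with $(\alpha) = \mathfrak{A}^2$; existence of $\mathfrak{A}$ follows from $\val(\bar\alpha) = 0$ and uniqueness from unique factorization in $I(L)$. Since $\bar\alpha \in L_{J_\infty}$, the element $\alpha$ itself is a generator witnessing $(\alpha) \in P(L_{J_\infty})$, hence $[\mathfrak{A}]^2 = [(\alpha)] = 0$ in $\Cl(L_{J_\infty})$, so $\phi$ lands in $\Cl(L_{J_\infty})[2]$ and $\mathfrak{A}^2 = (\alpha)$ exhibits the required generator. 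Surjectivity onto the described subset is immediate: if $\mathfrak{A}^2 = (\alpha)$ with $\alpha \in L_{J_\infty}$, then $\bar\alpha \in V$ and $\phi(\bar\alpha) = [\mathfrak{A}]$.

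For exactness at $V$, suppose $\phi(\bar\alpha) = 0$, so $\mathfrak{A} = (\beta)$ for some generator $\beta$ with $\bar\beta \in L_{J_\infty}$. Comparing $(\alpha) = \mathfrak{A}^2 = (\beta^2)$ gives $\alpha = u\beta^2$ for some $u \in \OO_L^\times$, so $\bar\alpha = \bar u$ in $L^\times/(L^\times)^2$; since $\bar\alpha \in L_{J_\infty}$, we get $\bar u \in U$. Conversely, a class in $U$ is represented by a unit $u$, so $\alpha = uc^2$ for some $c \in L^\times$ and $\mathfrak{A} = (c)$; a short check—adjusting the generator of $(c)$ by a suitable unit so as to meet the $L_{J_\infty}$ condition—then yields $(c) \in P(L_{J_\infty})$ and hence $\phi(\bar\alpha) = 0$.

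The main obstacle I foresee is the well-definedness of $\phi$: replacing $\alpha$ by $\alpha d^2$ changes $\mathfrak{A}$ to $\mathfrak{A}(d)$, and to conclude $[\mathfrak{A}(d)] = [\mathfrak{A}]$ in $\Cl(L_{J_\infty})$ one must show $(d) \in P(L_{J_\infty})$, i.e., that each principal ideal admits a generator whose class in $L^\times/(L^\times)^2$ lies in $L_{J_\infty}$. This is the delicate interplay between the unit group and the condition at $\infty$ that $P(L_{J_\infty})$ is designed to absorb; I would verify it by producing, for every $d \in L^\times$, a unit $u \in \OO_L^\times$ with $\res_\infty(\bar{ud}) \in J_\infty$, leveraging the explicit description of $J_\infty$ furnished by Lemma \ref{lem-Real_dims} together with approximation arguments for signs of global elements in the totally real setting of later sections.
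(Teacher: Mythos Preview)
Your direct construction of the boundary map $\phi$ is the explicit unpacking of the paper's one-line proof, which simply applies the snake lemma to the diagram
\[
\xymatrix{
& L^\times \cap \res_\infty^{-1}(J_\infty) \ar[rr]^-{2}\ar[d]& & L^\times \cap \res_\infty^{-1}(J_\infty)\ar[rr]\ar[d] & & L_{J_\infty}\ar[rr]\ar[d]^{\val} & &0  \\
0 \ar[r] & I(L)\ar[rr]^-{2} & & I(L)\ar[rr] & &I(L)/I(L)^2\ar[rr]  & &0.
}
\]
Your $\phi$ is the connecting homomorphism, so the two routes coincide in spirit.

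You are right that well-definedness of $\phi$ is the crux, but the resolution you propose does not work. You want to show that for every $d\in L^\times$ there is a unit $u\in\OO_L^\times$ with $\res_\infty(ud)\in J_\infty$; equivalently, that $\res_\infty(\OO_L^\times)\cdot J_\infty = \{\pm 1\}^p$, i.e.\ $P(L_{J_\infty})=P(L)$. This is false in general: the Remark immediately following the lemma records only $P^+(L)\subseteq P(L_{J_\infty})\subseteq P(L)$, with possibly strict inclusions, and your claim would force $\Cl(L_{J_\infty})=\Cl(L)$, collapsing the very refinement the lemma is designed to capture. Lemma~\ref{lem-Real_dims} and weak approximation let you realize an arbitrary sign pattern by some element of $L^\times$, but they say nothing about doing so with a \emph{unit}; that is governed by $\rho_\infty$, which need not vanish. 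The same issue is hidden in the paper's diagram: exactness of the top row at the middle term requires every element of $(L^\times)^2$ to be the square of something with signature in $J_\infty$, which is again the assertion $P(L_{J_\infty})=P(L)$. So the snake-lemma proof is not more complete than yours here; it merely packages the same difficulty inside the hypothesis ``the rows are exact.''
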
 
\begin{proof}
Consider 
$$\xymatrix{
& L^\times \cap \res_\infty^{-1}(J_\infty) \ar[rr]^2\ar[d]& & L^\times \cap \res_\infty^{-1}(J_\infty)\ar[rr]\ar[d] & & L_{J_\infty}\ar[rr]\ar[d]^\val & &0  \\
0 \ar[r] & I(L)\ar[rr]^2 & & I(L)\ar[rr] & &I(L)/I(L)^2\ar[rr]  & &0
} 
$$
and apply the snake lemma.
\end{proof}

\begin{remark}
Let $P(L)$ be the subgroup of all principal fractional ideals, let $P^+(L)$ be the subgroup of principal ideals generated by a totally positive element, and let $P(L_{J_\infty})$ be as above. Since the trivial signature $(1,1,\ldots,1)\in J_\infty$, it implies that $P^+(L) \subseteq P(L_{J_\infty}) \subseteq P(L)$, and therefore there are surjections
$$\Cl(L) \twoheadleftarrow \Cl(L_{J_\infty}) \twoheadleftarrow \Cl^+(L),$$
where $\Cl^+(L)=I(L)/P^+(L)$ is the narrow class group of $L$.
\end{remark}

Putting all this together (and writing $\dim$ for $\dim_{\FF_2}$), we obtain a bound
\begin{align}\label{newbound}
\nonumber \dim\Sel^{(2)}(\QQ,J)& \leq \dim(\widehat{H}) = \dim U + \dim \Cl(L_{J_\infty})[2] + \dim W \\ 
\nonumber & = \dim \left(\OO_L^\times/(\OO_L^\times)^2\cap \res_\infty^{-1}(J_\infty)\right) + \dim \Cl(L_{J_\infty})[2] + \dim G\cap \text{val}\left(L_{J_\infty}\right)\\
 & \leq \dim \left(\OO_L^\times/(\OO_L^\times)^2\cap \res_\infty^{-1}(J_\infty)\right)  + \dim \Cl^+(L)[2] + \dim G\cap \text{val}\left(L_{J_\infty}\right).
\end{align}
We note here that $\dim \left(\OO_L^\times/(\OO_L^\times)^2\cap \res_\infty^{-1}(J_\infty)\right)\leq m_\infty -1$, where we have used the fact that $\dim(\OO_L^\times/(\OO_L^\times)^2)=m_\infty$, and the fact that $\res_\infty(-1)$ is not in $J_\infty$ (because $J_\infty=\delta_\infty(J(\RR))\subseteq H_\infty$, which is the kernel of the norm map, so $N(j)=1$ for $j\in J_\infty$, but the norm $N(-1)=-1$ because the degree of $L$ is odd). We will improve on the bound given by (\ref{newbound}) above by making certain assumptions about $G$ and a more careful analysis of the dimension of the subgroup of totally positive units. Before we state our refinements, we review some of the results on totally positive units that we shall need.

It is worth pointing out that the third line of Eq. (\ref{newbound}) is not necessarily an improvement over the bound in Prop. \ref{stollbnd} if $\dim \Cl^+(L)[2]> \dim \Cl(L)[2]$. In our setting, we will seek conditions where $\dim \Cl^+(L)[2]= \dim \Cl(L)[2]$ and then our bound in Eq. (\ref{newbound}) will be an improvement due to the more careful counting of units according to their infinite valuations.

\subsection{Totally Positive Units}\label{sec-totally}
Let $L$ be a totally real Galois number field of prime degree $p>2$, with embeddings $\tau_i : L \to \RR$, for $i=1,\ldots,p$, and maximal order $\OO_L$. Let $\Cl(L) = \Cl(\OO_L)$ be the ideal class group of $L$, and let $\Cl^+(L)$ be the narrow class group. Let $V_\infty = \{\pm 1\}^p\cong (\FF_2)^p$ and, by abuse of notation, we extend the map $\res_\infty$ (as in Definition \ref{defn-res}, where we note that $H_\infty\cong V_\infty$ and $H\subseteq L^\times/(L^\times)^2$) to
$$\res_\infty: L^\times/(L^\times)^2 \to V_\infty$$
by $\res_\infty(\alpha) = (\sgn(\tau_1(\alpha)),\sgn(\tau_2(\alpha)),\ldots,\sgn(\tau_p(\alpha)))$. Let $\OO_L^\times$ be the unit group of $\OO_L$, and let $\OO_L^{\times, +}$ be the subgroup of totally positive units. Thus,
$$\ker\left(\res_\infty|_{\OO_L^\times/(\OO_L^\times)^2}\right) = \OO_L^{\times,+}/(\OO_L^\times)^2.$$
We refer the reader to \cite{dummit} for heuristics and conjectures about the dimension of the totally positive units (in particular, the conjecture on page 4). In the following theorem, we use the notation of \cite{dummit}.
\begin{thm} \label{thm-rhos}
Let $\rho$, $\rho^+$, and $\rho_\infty$ be defined by
$$\rho = \dim_{\FF_2} \Cl(L)/2\Cl(L),\ \rho^+ = \dim_{\FF_2} \Cl^+(L)/2\Cl^+(L), \text{ and } \rho_\infty = \dim_{\FF_2} \OO_L^{\times,+}/(\OO_L^\times)^2$$
Then,
\begin{enumerate} 
\item $\rho_\infty= p-\dim_{\FF_2} \res_\infty(\OO_L^\times/(\OO_L^\times)^2)=\dim_{\FF_2} \{\pm 1 \}^{p}/\res_\infty(\OO_L^\times/(\OO_L^\times)^2).$
\item We have
$$ 0 \to \{\pm 1 \}^{p}/\res_\infty(\OO_L^\times/(\OO_L^\times)^2) \to \Cl^+(L) \to \Cl(L)\to 0.$$
In particular, $\max\{\rho,\rho_\infty\}\leq \rho^+\leq \rho_\infty+\rho$, and $\rho^+=\rho_\infty+\rho$ if and only if the exact sequence splits.
\item {\rm (Armitage-Fr\"ohlich)} $\rho^+-\rho \leq (p-1)/2$.
\end{enumerate}
\end{thm}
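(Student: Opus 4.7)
The plan is to prove the three parts of Theorem \ref{thm-rhos} in order, using short exact sequences and dimension counting over $\FF_2$.

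For part (1), Dirichlet's unit theorem gives $\OO_L^\times \cong \{\pm 1\}\times\ZZ^{p-1}$ because $L$ is totally real of degree $p$, so $\dim_{\FF_2}\OO_L^\times/(\OO_L^\times)^2 = p$. Since squares of units are totally positive, $\res_\infty$ factors through $\OO_L^\times/(\OO_L^\times)^2$, with kernel exactly $\OO_L^{\times,+}/(\OO_L^\times)^2$. The short exact sequence
$$0 \to \OO_L^{\times,+}/(\OO_L^\times)^2 \to \OO_L^\times/(\OO_L^\times)^2 \xrightarrow{\res_\infty} \res_\infty\bigl(\OO_L^\times/(\OO_L^\times)^2\bigr) \to 0,$$
together with the inclusion $\res_\infty(\OO_L^\times/(\OO_L^\times)^2) \subseteq \{\pm 1\}^p$, yields both equalities in (1) by counting dimensions.

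For part (2), I would identify the kernel of the natural surjection $\Cl^+(L) \twoheadrightarrow \Cl(L)$ with $P(L)/P^+(L)$, where $P(L)$ and $P^+(L)$ denote principal, resp.\ totally positive principal, fractional ideals. The signature map $(\alpha) \mapsto \res_\infty(\alpha)$ is well defined modulo signatures of units and gives an isomorphism $P(L)/P^+(L) \cong \{\pm 1\}^p/\res_\infty(\OO_L^\times/(\OO_L^\times)^2)$ (using that $\res_\infty(\OO_L^\times) = \res_\infty(\OO_L^\times/(\OO_L^\times)^2)$ because the target is $2$-torsion). Call this kernel $A$, and apply the snake lemma to multiplication by $2$ on $0\to A\to\Cl^+(L)\to\Cl(L)\to 0$. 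Since $A$ is elementary $2$-abelian, $A[2]=A/2A=A$, and the snake sequence collapses to
$$\Cl(L)[2] \xrightarrow{\partial} A \to \Cl^+(L)/2\Cl^+(L) \to \Cl(L)/2\Cl(L) \to 0.$$
Counting dimensions yields $\rho^+ = \rho + \rho_\infty - \dim_{\FF_2}\im\partial$; since $\im\partial$ is a subquotient of both $A$ and $\Cl(L)[2]$, its dimension is at most $\min\{\rho,\rho_\infty\}$, which gives $\max\{\rho,\rho_\infty\}\leq \rho^+\leq \rho+\rho_\infty$. Equality on the right occurs exactly when $\partial=0$, and one then upgrades this to a splitting of the original integral sequence as follows: if $c$ generates a cyclic summand $\ZZ/2^a$ in $\Cl(L)$ and $\tilde c\in\Cl^+(L)$ is any lift, then $\partial(2^{a-1}c)=2^a\tilde c\in A$, so $\partial=0$ forces $\tilde c$ to have order $2^a$; assembling such lifts over all invariant factors of the $2$-part of $\Cl(L)$ produces a complement to $A$ in $\Cl^+(L)$.

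For part (3), I would simply invoke the Armitage--Fr\"ohlich theorem, which gives $\rho^+-\rho\leq [L:\QQ]/2 = p/2$; since $\rho^+-\rho$ is a non-negative integer and $p$ is odd, this sharpens to $\rho^+-\rho\leq(p-1)/2$. The main obstacle I anticipate is the splitting criterion in part (2): the dimension identity is a routine snake-lemma computation, but turning the vanishing of the connecting map $\partial$ into an actual splitting requires the lifting argument above together with a careful choice of generators of the $2$-primary part of $\Cl(L)$; everything else in the proof is either a direct diagram chase or a citation of a classical result.
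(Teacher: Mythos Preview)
Your proof is correct and, for parts (1) and (3), follows essentially the same path as the paper: part (1) is just rank--nullity for $\res_\infty$ on $\OO_L^\times/(\OO_L^\times)^2$, and part (3) is a direct citation of Armitage--Fr\"ohlich with the observation that $\lfloor p/2\rfloor=(p-1)/2$ for $p$ odd.

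The real difference is in part (2). The paper does not prove this at all; it simply refers the reader to Section~2 (in particular Equation~(2.9)) of the Dummit--Voight paper \cite{dummit}. You instead supply a self-contained argument: identify $\ker(\Cl^+(L)\to\Cl(L))$ with $P(L)/P^+(L)\cong\{\pm1\}^p/\res_\infty(\OO_L^\times)$ via the signature map, then run the snake lemma for multiplication by~$2$ and read off the inequality $\max\{\rho,\rho_\infty\}\le\rho^+\le\rho+\rho_\infty$ from $\rho^+=\rho+\rho_\infty-\dim_{\FF_2}\im\partial$. Your splitting criterion is also handled correctly: the key observation that $\partial(2^{a-1}c)=2^a\tilde c$ forces every lift $\tilde c$ of a cyclic generator $c$ of $2$-power order $2^a$ to have the same order when $\partial=0$, and the lifts then assemble into a section because any relation among the $\tilde c_i$ descends to a relation among the $c_i$. (The odd part of $\Cl(L)$ lifts automatically since $A$ is $2$-torsion, so restricting attention to the $2$-primary component is harmless.) What your approach buys is independence from \cite{dummit}; what the paper's approach buys is brevity.
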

\begin{proof}
For part (1), note that $\rho_\infty = \dim_{\FF_2} \OO_L^{\times,+}/(\OO_L^\times)^2$ is the dimension of $\ker\left(\res_\infty|_{\OO_L^\times/(\OO_L^\times)^2}\right)$, and the dimension of $\OO_L^\times/(\OO_L^\times)^2$ is $p$. Thus, the dimension of the image of $\res_\infty|_{\OO_L^\times/(\OO_L^\times)^2}$ is $p$ minus the dimension of the kernel. 

For part (2), see Section 2 of \cite{dummit}, and in particular Equation (2.9). Part (3) is shown in \cite{armitage}, where it is shown that $\rho^+-\rho\leq \lfloor r_1/2 \rfloor$, where $r_1$ is the number of real embeddings of $L$. Here $r_1=p$ is an odd prime, so the proof is concluded.
\end{proof}

From the statement of the previous theorem, we see that $\rho^+\geq \rho_\infty$. However, $\rho\geq \rho_\infty$ is not necessarily true. In the following result, a condition is given that implies $\rho\geq \rho_\infty$ (see also \cite{hughes}, Section 3).

\begin{thm}[{\cite[Corollaire 2c]{oriat}}]\label{thm-oriat} Let $L/\QQ$ be a finite abelian extension with Galois group of odd exponent $n$, and suppose that $-1$ is congruent to a power of $2$ modulo $n$. Then, in the notation of Theorem \ref{thm-rhos}, we have $\rho=\rho^+$. In particular, $\rho\geq \rho_\infty$.
\end{thm}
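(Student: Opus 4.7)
The plan is to reduce the claim $\rho = \rho^+$ to surjectivity of a $G$-equivariant connecting map, and then extract that surjectivity from a self-duality of $\FF_2[G]$-characters forced by the hypothesis. First, since $G = \Gal(L/\QQ)$ has odd exponent, it contains no element of order $2$. If $L$ admitted a non-real embedding, then complex conjugation would restrict to a non-trivial involution in $G$, a contradiction; hence $L$ is totally real. Setting $d = [L:\QQ]$, the standard exact sequence (compare part (2) of Theorem~\ref{thm-rhos}) reads
$$0 \to A \to \Cl^+(L) \to \Cl(L) \to 0, \qquad A := \{\pm 1\}^d/\sgn(\OO_L^\times),$$
in which $A$ is an $\FF_2$-vector space carrying a natural $\FF_2[G]$-action coming from the transitive $G$-action on the real places of $L$.

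Applying the snake lemma to multiplication by $2$ on this short exact sequence yields the six-term sequence
$$0 \to A \to \Cl^+(L)[2] \to \Cl(L)[2] \xrightarrow{\delta} A \to \Cl^+(L)/2 \to \Cl(L)/2 \to 0.$$
Using the identity $\dim_{\FF_2} M[2] = \dim_{\FF_2} M/2M$ for any finite abelian $M$, a dimension count collapses this to
$$\rho^+ \;=\; \rho + \dim_{\FF_2} A - \dim_{\FF_2} \im \delta,$$
so $\rho = \rho^+$ is equivalent to surjectivity of $\delta$. Concretely, $\delta$ sends a class $[\mathfrak{I}] \in \Cl(L)[2]$ with $\mathfrak{I}^2 = (\alpha)$ to $\sgn(\alpha) \bmod \sgn(\OO_L^\times)$, and is $G$-equivariant, so surjectivity is an equivariant question.

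Since $|G|$ is odd, Maschke's theorem makes $\FF_2[G]$ semisimple, so both $\Cl(L)[2]$ and $A$ decompose as direct sums of isotypic components indexed by Galois orbits of $\overline{\FF_2}^\times$-valued characters $\chi$ of $G$ under the Frobenius action $\chi \mapsto \chi^2$. Surjectivity of $\delta$ may thus be checked one isotypic component at a time. The hypothesis $-1 \equiv 2^k \pmod n$ translates exactly to the statement that for every character $\chi$ of order dividing $n$ one has $\chi^{-1} = \chi^{2^k}$; equivalently, every Frobenius orbit is closed under inversion, so every irreducible $\FF_2[G]$-summand is self-dual.

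The main obstacle is to convert this self-duality into actual surjectivity of $\delta$. I would proceed by a Spiegelungssatz-style pairing: pair $\Cl(L)[2]$ against $A$ using signatures at the infinite places combined with global reciprocity (concretely, via a cup product on $H^1(\Gal(\overline L/L),\mu_2)$ restricted by the Selmer-type local conditions), and verify by local computation at the primes above $2$ that the resulting pairing is non-degenerate on each self-dual $\FF_2[G]$-isotypic component. The delicate step is controlling the local contribution at $2$, where the Frobenius hypothesis is what makes the characters of $\Cl(L)[2]$ and of $A$ match up orbit by orbit; this local-global bookkeeping, together with a Herbrand-quotient accounting along the auxiliary tower $L(\sqrt{u})/L$ for signature-detecting units $u$, is exactly Oriat's route in \cite{oriat} and is the technical heart of the proof.
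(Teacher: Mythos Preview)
The paper does not give a proof of this theorem at all; it is quoted verbatim as Corollaire~2c of \cite{oriat} and used as a black box. So there is no ``paper's own proof'' to compare against, and your write-up already goes further than the paper does.

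Your reduction is sound. The argument that $L$ is totally real, the snake-lemma computation yielding $\rho^+ = \rho + \dim_{\FF_2} A - \dim_{\FF_2}\im\delta$, and the translation of the hypothesis ``$-1$ is a power of $2$ mod $n$'' into self-duality of every simple $\FF_2[G]$-module are all correct and are indeed the natural preliminary steps. Identifying surjectivity of the connecting map $\delta$ as the crux is exactly right.

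The genuine gap is your final paragraph. You do not prove that $\delta$ is surjective; you describe a plan (a Spiegelungssatz-type pairing, local analysis at $2$, Herbrand-quotient bookkeeping) and then explicitly say this ``is exactly Oriat's route'' and ``the technical heart of the proof.'' That is an honest outline, but it is not a proof: self-duality of the simple $\FF_2[G]$-constituents by itself says nothing about which constituents actually occur in $\Cl(L)[2]$ versus in $A$, nor about whether $\delta$ is nonzero on the relevant isotypic pieces. Establishing that requires the genuine arithmetic input (the reflection pairing and the control of local conditions) that you have deferred. So as written, your proposal reduces the theorem to the correct key lemma but leaves that lemma unproved --- which, to be fair, is also all the present paper does.
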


We obtain the following corollary.

\begin{cor}\label{cor-order2iseven}
Let $L/\QQ$ be a cyclic extension of odd prime degree $p$, and suppose that the order of $2$ in $(\ZZ/p\ZZ)^\times$ is even. Then, $\rho=\rho^+$. In particular, $\dim_{\FF_2} \Cl(L)[2] = \dim_{\FF_2} \Cl^+(L)[2]$.
\end{cor}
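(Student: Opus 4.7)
The plan is to deduce this directly from Oriat's theorem (Theorem \ref{thm-oriat}), so the work reduces to checking its two hypotheses in our situation and then translating the conclusion $\rho=\rho^+$ (a statement about $2$-ranks of quotients) into the asserted statement about $2$-torsion subgroups.

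First I would observe that since $L/\QQ$ is cyclic of odd prime degree $p$, the Galois group $\Gal(L/\QQ)\cong \ZZ/p\ZZ$ is abelian of odd exponent $n=p$, which is the first hypothesis of Theorem \ref{thm-oriat}. The second hypothesis requires $-1$ to be a power of $2$ modulo $p$. To verify this, I would use the assumption that the order of $2$ in $(\ZZ/p\ZZ)^\times$ is even, say $2k$. Then $2^k$ has order exactly $2$ in $(\ZZ/p\ZZ)^\times$. Since $(\ZZ/p\ZZ)^\times$ is cyclic of even order $p-1$, it contains a unique element of order $2$, namely $-1$. Hence $2^k\equiv -1\pmod{p}$, as required.

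Having verified both hypotheses, Theorem \ref{thm-oriat} gives $\rho=\rho^+$. To finish I would invoke the standard fact that for any finite abelian group $A$, the $\FF_2$-dimensions of $A/2A$ and $A[2]$ coincide (both equal the $2$-rank of $A$). Applying this with $A=\Cl(L)$ and $A=\Cl^+(L)$ converts
\[
\dim_{\FF_2}\Cl(L)/2\Cl(L) \;=\; \rho \;=\; \rho^+ \;=\; \dim_{\FF_2}\Cl^+(L)/2\Cl^+(L)
\]
into
\[
\dim_{\FF_2}\Cl(L)[2] \;=\; \dim_{\FF_2}\Cl^+(L)[2],
\]
which is the desired conclusion.

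There is no real obstacle here: the corollary is a direct specialization of Oriat's theorem, and the only small observation needed is the identification of the order-$2$ element of $(\ZZ/p\ZZ)^\times$ with $-1$, which is immediate from the cyclicity of $(\ZZ/p\ZZ)^\times$.
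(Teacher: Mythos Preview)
Your proof is correct and follows essentially the same approach as the paper: both verify the hypotheses of Oriat's theorem by noting that the Galois group has odd exponent $p$ and that the evenness of the order of $2$ modulo $p$ is equivalent to $-1$ being a power of $2$ modulo $p$. You simply spell out this equivalence more carefully (via the unique element of order $2$ in the cyclic group $(\ZZ/p\ZZ)^\times$) and make explicit the passage from $2$-ranks of quotients to dimensions of $2$-torsion, whereas the paper leaves these as implicit observations.
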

\begin{proof}
Suppose that $\Gal(L/\QQ)\cong \ZZ/p\ZZ$ for some prime $p>2$, such that the order of $2$ in $(\ZZ/p\ZZ)^\times$ is even (since $\Gal(L/\QQ)$ is cyclic of order $p$, this is equivalent to $-1$ being congruent to a power of $2$ modulo $p$). Hence, Theorem \ref{thm-oriat} applies, and $\rho=\rho^+$. 
\end{proof}

The odd primes below $100$ such that the order of $2$ is odd modulo $p$ are $7$, 
$23$, $31$, $47$, $71$, $73$, $79$, and $89$, so the corollary applies to all other primes not in this list (i.e., $3$, $5$, $11$, $13$, $17$, $19$, etc.). 

\begin{thm}[\cite{garbanati}]\label{thm-garbanati}
Let $K/\QQ$ be an (imaginary) abelian extension of the rationals of degree $n$, let $h_K$ be the class number of $K$, and let $C_K\subseteq \OO_K^\times$ the groups of circular units of $K$ (as defined in \cite{garbanati}, p. 376), and let $C_K^+ \subseteq \OO_K^{\times, +}$ be the subgroup of circular units that are totally positive. Let $K^+$ be the maximal real subfield of $K$, and let $h_K^+$ be its class number. Let $h_K^- = h_K/h_K^+$. Further, assume that each prime $p$ which ramifies in $K$ does not split in $K^+$. Then:
\begin{enumerate}
\item The index $[\OO_K^{\times,+}:(\OO_K^\times)^2] $ is a divisor of the index $[C_K^{+}:C_K^2]$.
\item If the discriminant of $K$ is plus or minus a power of a prime, then $h_K^-$ is odd if and only if $[C_L^{+}:C_L^2]=1$, where $C_L$ is the subgroup of circular units of $L=K^+$.
\item Suppose the discriminant of $K$ is plus or minus a power of a prime, $[K^+:\QQ]=n$ is a power of an odd prime $p$, and the order of $2\bmod p$ is even. Then, $h_K^-$ is odd if and only if $h_K^+$ is odd.
\end{enumerate}
\end{thm}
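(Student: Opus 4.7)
The plan is to prove parts (1), (2), (3) in order, each serving as a stepping stone for the next. The principal inputs are Sinnott's index formula for circular units in abelian extensions of $\QQ$, the analytic class number formula of Hasse, and the structural results on totally positive units collected in Section \ref{sec-totally}.

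For part (1), I would establish that the inclusion-induced map $C_K^+/C_K^2 \to \OO_K^{\times,+}/(\OO_K^\times)^2$ is surjective; the divisibility of indices then follows immediately. Given a totally positive unit $u$, Sinnott's theorem says $[\OO_K^\times : C_K]$ is finite and explicit, so some power of $u$ lies in $C_K$; using this one writes $u = c \cdot v^2$ for $c \in C_K$ and $v \in \OO_K^\times$, and adjusts $c$ by a sign-fixing circular unit to land in $C_K^+$ without affecting the square class. The technical crux is the compatibility of the signature map on $C_K$ with that on $\OO_K^\times$, i.e., that the image of $C_K$ under $\res_\infty$ already captures every positivity pattern realized by full units. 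This is a known property of circular units in the abelian setting and is where the first use of Sinnott's structure theory enters.

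For part (2), under the hypothesis that $\disc(K) = \pm p^n$, I would invoke Hasse's analytic class number formula
$$h_K^- = Q \, w_K \prod_{\chi \text{ odd}} \left(-\tfrac{1}{2} B_{1,\chi}\right)$$
together with Sinnott's expression for $[\OO_K^\times : C_K]$, each a product of Bernoulli numbers twisted by characters. The prime-power discriminant hypothesis kills extraneous ramification contributions and forces $Q$ and $w_K$ to behave predictably $2$-adically. Matching the $2$-adic valuations of the two formulas, the parity of $h_K^-$ is seen to be governed precisely by the parity of $[C_L^+ : C_L^2]$ with $L = K^+$, yielding the stated equivalence.

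For part (3), I would chain (1) and (2) with Theorem \ref{thm-oriat}. The hypothesis that $[K^+:\QQ] = n = p^k$ with the order of $2$ modulo $p$ being even forces $-1$ to be congruent to a power of $2$ modulo $n$, so Theorem \ref{thm-oriat} applies to $L = K^+$ and gives $\rho = \rho^+$; by Theorem \ref{thm-rhos}(2) this forces $\rho_\infty \leq \rho$. Now $h_K^+$ odd is equivalent to $\rho = 0$, which implies $\rho_\infty = 0$, which by (1) gives $[\OO_L^{\times,+}:(\OO_L^\times)^2] = 1$; combined with (2) one direction follows. For the converse, one must upgrade the divisibility in (1) to an equality under the added hypotheses of (3), so that $[C_L^+:C_L^2]=1$ is recovered from $\rho_\infty = 0$. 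I expect this equality refinement of (1), which requires a delicate $2$-adic analysis of Sinnott's index $[\OO_L^\times : C_L]$ to rule out a hidden factor of $2$, to be the main obstacle. It is also the step where the prime-power discriminant hypothesis of (3) does essential work beyond its use in (2).
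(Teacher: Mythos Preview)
The paper does not prove this theorem at all: its entire proof is the sentence ``The results are, respectively, Lemma~5, Theorem~3, and Theorem~4 of \cite{garbanati}.'' So there is no argument in the paper to compare against; anything you supply goes beyond what the authors do.

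That said, your sketch has a genuine gap in part~(1). From the mere finiteness of $[\OO_K^\times : C_K]$ you cannot conclude that an arbitrary totally positive unit $u$ decomposes as $c \cdot v^2$ with $c \in C_K$. Knowing $u^m \in C_K$ for some $m$ only gives $u \equiv u^m \pmod{(\OO_K^\times)^2}$ when $m$ is \emph{odd}, and Sinnott's index $[\OO_K^\times : C_K]$ is not odd in general under the stated hypotheses---his formula expresses it as $h_K^+$ times a ramification constant, neither of which need be odd. The assertion you fall back on, that the signature image of $C_K$ already equals that of the full unit group, is not a formality but the actual content of Garbanati's Lemma~5; it is where the hypothesis that ramified primes do not split in $K^+$ does its work, and your sketch does not engage with it.

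You are candid that part~(3) requires upgrading the divisibility in~(1) to an equality and that you do not see how to do this. That honesty is appropriate, but it means the converse direction of~(3) is not established even in outline. In short, your proposal is a plausible roadmap for parts of the argument, but the two places you flag as ``technical crux'' and ``main obstacle'' are in fact the substance of Garbanati's proofs, and neither is handled here.
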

\begin{proof} The results are, respectively, Lemma 5, Theorem 3, and Theorem 4 of \cite{garbanati}.
\end{proof}

Next, we cite a results of Estes which extends work  of Davis (\cite[Corollary 2]{davis}), and Stevenhagen (\cite[Theorem 2.5]{stevenhagen}). See also \cite[Theorems 3.1 and 3.3]{kim}.

\begin{thm}[\cite{estes}]\label{thm-estes} Let $q$ and $p$  be primes such that $q=2p+1$. If $2$ is inert in $\QQ(\zeta_p)^+$, where $\zeta_p$ is a primitive $p$-th root of unity, then the class number of $\QQ(\zeta_q)$ is odd.
\end{thm}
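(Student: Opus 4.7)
The plan is to factor $h_{\QQ(\zeta_q)} = h^+ \cdot h^-$, where $h^+$ is the class number of $L = \QQ(\zeta_q)^+$ and $h^-$ is the relative class number, and to show that both factors are odd under the hypothesis that $2$ is inert in $\QQ(\zeta_p)^+$. The hypotheses ``discriminant equal to $\pm$ a prime power'' and ``$[L:\QQ]$ a prime power'' demanded by Theorem~\ref{thm-garbanati} are immediately satisfied here, since the discriminant of $\QQ(\zeta_q)$ is $\pm q^{q-2}$ and $[L:\QQ] = p$.

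For the minus part, I would apply Theorem~\ref{thm-garbanati}(2), which reduces the oddness of $h^-$ to the equality $[C_L^+ : C_L^2] = 1$, i.e., to showing that every totally positive cyclotomic unit of $L$ is a square in $C_L$. To establish this, I would analyze the signature map
\[
\sigma \colon C_L/C_L^2 \longrightarrow \{\pm 1\}^p, \qquad \eta \longmapsto (\sgn(\tau_i(\eta)))_{i=1}^p,
\]
on the usual generators $\eta_a = \zeta_q^{(1-a)/2}(1-\zeta_q^a)/(1-\zeta_q)$ for $a = 2, \ldots, (q-1)/2$. Both source and target carry compatible actions of $G = \Gal(L/\QQ) \cong \ZZ/p\ZZ$, so $\sigma$ is a map of $\FF_2[G]$-modules. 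The hypothesis that $2$ is inert in $\QQ(\zeta_p)^+$ (a field of degree $(p-1)/2$) forces the order of $2$ in $(\ZZ/p\ZZ)^\times/\{\pm 1\}$ to equal $(p-1)/2$, which in turn controls the factorization of $\FF_2[G]$ into fields of the form $\FF_{2^f}$ and severely restricts the possible $\FF_2[G]$-submodules on both sides. Combining this with the explicit permutation action of $G \cong (\ZZ/q\ZZ)^\times/\{\pm 1\}$ on the $\eta_a$ and on their signature vectors should force $\sigma$ to be injective, hence $[C_L^+ : C_L^2] = 1$.

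For the plus part, the cleanest route when the order of $2 \bmod p$ is even is to invoke Theorem~\ref{thm-garbanati}(3) to transfer the oddness of $h^-$ to $h^+$. When instead this order is odd (for instance when $-1 \notin \langle 2 \rangle \subset (\ZZ/p\ZZ)^\times$ and $(p-1)/2$ is odd, as happens for $p=23$), a direct argument is needed: since $L/\QQ$ is cyclic of odd prime degree $p$ ramified only at $q$, Chevalley's ambiguous class number formula gives $|\Cl(L)^G| = 1$, and a careful $\FF_2[G]$-module analysis of $\Cl(L)[2]$ using the ring decomposition $\FF_2[G] \cong \FF_2 \times \prod_i \FF_{2^f}$ together with the inertness hypothesis should also kill the anti-invariant part of the $2$-torsion.

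The main obstacle is the representation-theoretic core of the minus-part argument: converting ``$2$ is inert in $\QQ(\zeta_p)^+$'' into the injectivity of $\sigma$ on cyclotomic units modulo squares. This requires pinning down exactly which $\FF_2[G]$-submodules of $\{\pm 1\}^p$ can arise as $\im \sigma$ and ruling out every one but the full submodule, and it is here that the specific arithmetic of $2$ in the auxiliary field $\QQ(\zeta_p)^+$ must be used in an essential way.
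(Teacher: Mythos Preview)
The paper does not prove this theorem; it is quoted from Estes~\cite{estes} (with related references to Davis, Stevenhagen, and Kim) and used as a black box. So there is no ``paper's own proof'' to compare against, only the original literature.

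That said, your outline is broadly in line with how Estes and Stevenhagen actually argue, but your handling of the plus part is more laborious than necessary. Once you have shown $h^-$ is odd, you do not need Theorem~\ref{thm-garbanati}(3) or a separate Chevalley/ambiguous-class argument to get $h^+$ odd: it is a standard fact for $K=\QQ(\zeta_q)$ that $2\mid h_K^+$ implies $2\mid h_K^-$ (the paper itself invokes this in the proof of Theorem~\ref{thm-davistausskyclassnumberodd}, citing \cite{stevenhagen}). Contraposing, $h^-$ odd forces $h^+$ odd, and hence $h_K=h^+h^-$ is odd. Your case split on the parity of the order of $2\bmod p$, and the Chevalley argument for the odd-order case, can therefore be dropped entirely.

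What remains is exactly what you flag as the main obstacle: proving $[C_L^+:C_L^2]=1$ via the $\FF_2[G]$-structure of the signature map. Your translation of the hypothesis is correct---``$2$ inert in $\QQ(\zeta_p)^+$'' means the order of $2$ in $(\ZZ/p\ZZ)^\times/\{\pm 1\}$ is $(p-1)/2$, so $\Phi_p(x)$ has at most two irreducible factors over $\FF_2$, each of degree $\geq (p-1)/2$---and this is precisely the input Estes uses to force the signature map on $C_L/C_L^2$ to have trivial kernel on the norm-one part. But as written, your proposal stops at ``should force $\sigma$ to be injective'' without explaining which explicit cyclotomic unit has nontrivial signature in each irreducible $\FF_2[G]$-summand; that concrete step (producing a circular unit whose signature vector is not $G$-invariant and hits the relevant summand) is the heart of the matter and is not yet supplied.
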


The following result combines the results of Davis, Estes, Stevenhagen, and Garbanati, and gives a specific criterion to check that $\rho_\infty=0$ for the maximal real subfield of a cyclotomic field.

\begin{thm}\label{thm-kim} Let $q$ and $p>2$  be primes such that $q=2p+1$, and let  $L=\QQ(\zeta_q+\zeta_q^{-1})$, where $\zeta_q$ is a primitive $q$-th root of unity. Further, assume that  the prime $2$ is inert in the extension $\QQ(\zeta_p)^+/\QQ$. Then, $\rho_\infty = \dim_{\FF_2} \OO_L^{\times,+}/(\OO_L^\times)^2 = 0.$
\end{thm}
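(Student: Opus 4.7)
The plan is to combine Estes's oddness result (Theorem \ref{thm-estes}) with Garbanati's circular-unit criterion (Theorem \ref{thm-garbanati}(2)) and the classical Kummer--Sinnott index formula for circular units in the maximal real subfield of a prime cyclotomic field. Since the hypothesis that $2$ is inert in $\QQ(\zeta_p)^+/\QQ$ is precisely the hypothesis of Theorem \ref{thm-estes}, I would first conclude that $h_K$ is odd for $K = \QQ(\zeta_q)$. Consequently both factors in $h_K = h_K^+ \cdot h_K^-$ are odd; in particular $h_L = h_K^+$ is odd and $h_K^-$ is odd.

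Next I would apply Theorem \ref{thm-garbanati}(2) to $K = \QQ(\zeta_q)$: its discriminant is (up to sign) a power of the single prime $q$, so the hypothesis is met, and the oddness of $h_K^-$ forces $[C_L^+ : C_L^2] = 1$. Equivalently, every totally positive circular unit of $L$ is already the square of a circular unit.

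Finally, I would upgrade this statement from circular units to all units. The classical index equality $[\OO_L^\times : C_L] = h_L$ (Kummer, for prime conductor), combined with $h_L$ being odd, implies that the quotient $\OO_L^\times/(C_L \cdot (\OO_L^\times)^2)$ is simultaneously of odd order and annihilated by $2$, hence trivial; thus $\OO_L^\times = C_L \cdot (\OO_L^\times)^2$. Given a totally positive unit $u \in \OO_L^{\times,+}$, writing $u = c v^2$ with $c \in C_L$ and $v \in \OO_L^\times$ forces $c = u v^{-2}$ to be totally positive, so $c \in C_L^+ \subseteq C_L^2 \subseteq (\OO_L^\times)^2$ by the previous step, and therefore $u \in (\OO_L^\times)^2$. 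This gives $\OO_L^{\times,+} = (\OO_L^\times)^2$ and hence $\rho_\infty = \dim_{\FF_2}\OO_L^{\times,+}/(\OO_L^\times)^2 = 0$. I expect the only mild subtlety to be bookkeeping: confirming that Garbanati's normalization of the circular units $C_L$ is compatible with the Kummer index formula (which in the prime-conductor case $L = \QQ(\zeta_q)^+$ is standard), and that the passage from "circular unit square" to "unit square" does not hide an extra $2$-power factor.
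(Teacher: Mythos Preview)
Your argument is correct and follows the paper's proof through the first two moves: invoke Theorem~\ref{thm-estes} to get $h_K$ odd (hence $h_K^-$ odd), then apply Theorem~\ref{thm-garbanati}(2) to deduce $C_L^+=C_L^2$. The only divergence is in the final step. The paper simply cites Theorem~\ref{thm-garbanati}(1), which says unconditionally that $[\OO_L^{\times,+}:(\OO_L^\times)^2]$ divides $[C_L^+:C_L^2]$, and concludes immediately. You instead bring in the Kummer index formula $[\OO_L^\times:C_L]=h_L$ together with the extra fact that $h_L$ is odd to force $\OO_L^\times=C_L\cdot(\OO_L^\times)^2$, and then argue by hand that any totally positive unit lands in $(\OO_L^\times)^2$. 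Your route is self-contained and avoids citing Garbanati's Lemma~5, at the cost of using one more ingredient (oddness of $h_L$, which you already have anyway); the paper's route is shorter because Garbanati(1) is already on the table. Your caveat about normalizations of circular units is well-placed but harmless here: for prime conductor $q$ the Sinnott and classical cyclotomic-unit groups agree, and the index is exactly $h_L$.
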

\begin{proof}
Let $K=\QQ(\zeta_q)$ and let $K^+ = L = \QQ(\zeta_q+\zeta_q^{-1})$. We note that the discriminant of $K$ is a power of a prime (namely $q$), and therefore the primes of $K$ or $L$ that are ramified (namely the primes above $q$), are totally ramified, so they do not split. Moreover, $p=(q-1)/2$ is prime and $[L:\QQ]=p$. Thus, the hypotheses of Theorem \ref{thm-garbanati} are satisfied for $K$ and $L$.
	
    If $2$ is inert in $\QQ(\zeta_p)^+/\QQ$, then Theorem \ref{thm-estes} shows that $h_K$ is odd, and therefore $h_K^-$ is odd as well, since $h_K^- = h_K/h_K^+$ by definition. Since the discriminant of $K$ is a power of $q$, Theorem \ref{thm-garbanati} part (2) shows that $[C_L^+:C_L^2]=1$ and therefore $[\OO_L^{\times,+}:(\OO_L^\times)^2]=1$ as well by part (1). We conclude that $\rho_\infty=0$.
\end{proof} 

There is in fact a conjecture of Davis and Taussky that says that $\rho_\infty=0$ in the case of $L=\QQ(\zeta_q+\zeta_q^{-1})$, where $p=(q-1)/2$ is a Sophie Germain prime. For more on the Davis--Taussky conjecture see \cite{davis}, \cite{edgar}, \cite{estes}, \cite{kim}, and \cite{stevenhagen}.

\begin{conj}[Davis--Taussky conjecture]\label{conj-taussky}
Let $q$ and $p$  be primes such that $q=2p+1$, and let  $L=\QQ(\zeta_q+\zeta_q^{-1})$, where $\zeta_q$ is a primitive $q$-th root of unity. Then, $C^+_L=C_L^2$. (Thus, it follows that $\rho_\infty = \dim_{\FF_2} \OO_L^{\times,+}/(\OO_L^\times)^2 = 0.$)
\end{conj}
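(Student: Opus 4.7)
The plan is to reduce the conjectured equality $C_L^+ = C_L^2$ to the injectivity of the signature map $\res_\infty$ restricted to $C_L/C_L^2$, and to attack this injectivity via the explicit description of the circular units of $L = \QQ(\zeta_q)^+$. Since $\Gal(L/\QQ)\cong \ZZ/p\ZZ$ is cyclic of prime order, $C_L/C_L^2$ carries the structure of an $\FF_2[\Gal(L/\QQ)]\cong \FF_2[x]/(x^p-1)$-module, and the image $\res_\infty(C_L/C_L^2) \subseteq \{\pm 1\}^p$ is a Galois-stable subspace of the (regular) permutation representation of $\Gal(L/\QQ)$ on the $p$ real embeddings. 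The special arithmetic coming from $q = 2p+1$ with $p$ prime — in particular, that the quadratic residues modulo $q$ form the unique subgroup of order $p$ in $(\ZZ/q\ZZ)^\times$ — should provide enough structure to force this image to be full-rank.

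First, I would fix an explicit generating set for $C_L$ modulo torsion (and hence modulo squares). The natural choice is the family of real cyclotomic units
$$\eta_a = \zeta_q^{(1-a)/2}\cdot \frac{1-\zeta_q^a}{1-\zeta_q} = \frac{\sin(\pi a/q)}{\sin(\pi/q)}, \qquad a\in (\ZZ/q\ZZ)^\times/\{\pm 1\},$$
together with their conjugates under a generator $\sigma_g$ of $\Gal(L/\QQ)$, where $g$ generates the order-$p$ (quadratic-residue) subgroup of $(\ZZ/q\ZZ)^\times$. In these coordinates, $\tau_i(\eta_a)$ has sign $(-1)^{\lfloor ag^{i-1}/q\rfloor + \lfloor g^{i-1}/q\rfloor}$, an elementary but delicate combinatorial quantity. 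Next, I would assemble these signs into a $p\times p$ matrix over $\FF_2$ and aim to show its rank equals $p-1$, the maximum possible after accounting for the relation among the $\eta_a$'s. Finally, Theorem \ref{thm-garbanati}(1) guarantees that $[\OO_L^{\times,+}:(\OO_L^\times)^2]$ divides $[C_L^+:C_L^2]$, so full rank of this sign matrix implies both $C_L^+=C_L^2$ and $\rho_\infty=0$.

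The main obstacle is precisely this rank computation in the cases not handled by Theorems \ref{thm-garbanati}(3), \ref{thm-estes}, and \ref{thm-kim} — namely, when the order of $2$ modulo $p$ is odd and $2$ is not inert in $\QQ(\zeta_p)^+/\QQ$, so that one cannot shortcut through the parity of $h_K^-$. The sign matrix encodes character-sum information analogous to what appears in class-number formulas for real cyclotomic fields, and proving full rank in general seems to require either (i) a Stickelberger-style congruence linking these signs to special values of Dirichlet $L$-functions modulo $2$, or (ii) an $\FF_2[x]/(x^p-1)$-module structural argument exploiting the scarcity of submodules when $2$ has large multiplicative order modulo $p$. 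I would begin by verifying the approach on the small Sophie Germain primes $p=5,11,23,\ldots$ to identify the correct invariant before committing to a general proof; I expect the passage from a numerical check to a uniform statement to require genuinely new input, which is consistent with the conjecture having remained open for decades.
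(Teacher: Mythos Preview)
This statement is Conjecture~\ref{conj-taussky} in the paper, not a theorem: the paper does \emph{not} prove it and explicitly treats it as open. So there is no ``paper's own proof'' to compare against. What the paper does offer is the reformulation in Theorem~\ref{thm-davistausskyclassnumberodd}: $C_L^+=C_L^2$ if and only if the class number of $K=\QQ(\zeta_q)$ is odd. Together with Theorem~\ref{thm-kim} (via Estes and Garbanati) this settles the conjecture whenever $2$ is inert in $\QQ(\zeta_p)^+$, and the paper verifies the remaining cases numerically only for $q\le 92459$.

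Your proposal is an honest research outline, not a proof, and you say as much in the final paragraph. The reduction you sketch---showing that the sign matrix of the cyclotomic units $\eta_a$ has $\FF_2$-rank $p-1$---is exactly the computation the paper carries out case by case (see Lemma~\ref{lemma-m_infty} and the proof of Theorem~\ref{thm-sophie}) to verify $\rho_\infty=0$ for small $q$. The genuine gap is precisely the one you identify: in the bad cases (order of $2$ modulo $p$ odd, $2$ not inert in $\QQ(\zeta_p)^+$) no known structural argument forces the sign matrix to have full rank, and via Theorem~\ref{thm-davistausskyclassnumberodd} this is equivalent to proving $h(\QQ(\zeta_q))$ odd for all Sophie Germain $q$, a well-known open problem. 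Neither your Stickelberger-type idea (i) nor the $\FF_2[G]$-module idea (ii) is fleshed out enough to close this gap; in particular, (ii) runs into trouble exactly when $\phi_p(x)$ factors over $\FF_2$, which is when the module argument of Proposition~\ref{prop-p3p5totpos} no longer applies. So your proposal correctly locates the obstruction but does not overcome it.
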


In the next result we note that the Davis--Taussky conjecture is equivalent to the class number of $\QQ(\zeta_q)$ being odd. (We thank David Dummit for pointing out the following equivalence to us.)

\begin{thm}\label{thm-davistausskyclassnumberodd}
	Let $q$ and $p$  be primes such that $q=2p+1$, let  $L=\QQ(\zeta_q+\zeta_q^{-1})$, where $\zeta_q$ is a primitive $q$-th root of unity, and let $K=\QQ(\zeta_q)$. Then, $C^+_L=C_L^2$ if and only if the class number of $K$ is odd.
\end{thm}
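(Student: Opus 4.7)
The plan is to reduce the theorem to Theorem \ref{thm-garbanati} parts (2) and (3). First, since $\disc(K) = \pm q^{q-2}$ is a prime power, Theorem \ref{thm-garbanati}(2) gives the equivalence
\[
h_K^-\text{ odd}\iff [C_L^+ : C_L^2] = 1\iff C_L^+ = C_L^2.
\]
As $h_K = h_K^+ \cdot h_K^-$ by definition, it then suffices to prove that $h_K^+$ is odd if and only if $h_K^-$ is odd; the direction $h_K^+$ odd $\Rightarrow h_K^-$ odd is automatic from this product decomposition.

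For the opposite direction ($h_K^-$ odd $\Rightarrow h_K^+$ odd), I would appeal to Theorem \ref{thm-garbanati}(3), whose remaining hypotheses ($\disc(K)$ a prime power and $[K^+:\QQ]=p$ an odd prime) are automatic in our Sophie Germain setup, provided that the order of $2 \bmod p$ is even. In this case the conclusion is immediate and both $h_K^+$ and $h_K^-$ have the same parity.

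The hard part will be the complementary case where $2$ has odd order modulo $p$ (for example $p = 23, 47, 73, 89, \dots$), which Theorem \ref{thm-garbanati}(3) does not cover. To address this I would chain together several results already in the excerpt: Theorem \ref{thm-garbanati}(1) shows that $C_L^+ = C_L^2$ implies $[\OO_L^{\times,+} : (\OO_L^\times)^2] = 1$, i.e., $\rho_\infty = 0$; in the Sophie Germain setup, one expects $\rho_\infty = 0$ to force $2$ to be inert in $\QQ(\zeta_p)^+/\QQ$, which is the converse of Theorem \ref{thm-kim}; then Theorem \ref{thm-estes} yields $h_K$ odd, and therefore $h_K^+ = h_K / h_K^-$ is odd as well. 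The main technical obstacle is the converse direction of Theorem \ref{thm-kim}, and I would draw on the Davis--Estes--Stevenhagen circle of results (cited just before Conjecture \ref{conj-taussky}) that pin down when $2$ is inert in $\QQ(\zeta_p)^+$ in terms of the totally positive unit index.
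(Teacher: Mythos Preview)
Your reduction via Theorem~\ref{thm-garbanati}(2) to the equivalence ``$h_K^-$ odd $\iff$ $h_K$ odd'' matches the paper exactly. There is a small slip in your write-up: the implication ``$h_K^+$ odd $\Rightarrow h_K^-$ odd'' is \emph{not} automatic from the factorization $h_K=h_K^+h_K^-$ (nothing about the product forces one factor's parity from the other's). What is automatic is ``$h_K$ odd $\Rightarrow h_K^-$ odd''; the content lies entirely in showing ``$h_K^-$ odd $\Rightarrow h_K^+$ odd'' (equivalently, $h_K^+$ even $\Rightarrow h_K^-$ even). You correctly identify this as the direction requiring work, so the slip is cosmetic.

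The substantive difference is in how you handle that implication. The paper does not split into cases on the order of $2$ modulo $p$; it simply invokes the classical fact (for $K=\QQ(\zeta_q)$ with $q$ prime) that $h_K^+$ even forces $h_K^-$ even, citing Stevenhagen for the argument. This gives the result uniformly in one line. Your route through Theorem~\ref{thm-garbanati}(3) works only when the order of $2$ mod $p$ is even, and your proposed workaround in the odd-order case has a genuine gap: you need the \emph{converse} of Theorem~\ref{thm-kim} (that $\rho_\infty=0$ forces $2$ to be inert in $\QQ(\zeta_p)^+$), which is neither stated in the paper nor, as far as I can see, available in the Davis--Estes--Stevenhagen results you gesture toward. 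Those results go in the direction ``$2$ inert $\Rightarrow$ $h_K$ odd $\Rightarrow$ $\rho_\infty=0$,'' not the reverse. So as written, your argument is incomplete for primes like $p=23$. The fix is to replace the case split by the direct parity implication $2\mid h_K^+ \Rightarrow 2\mid h_K^-$, which is exactly what the paper does.
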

\begin{proof}
	If $h^+_K$ is even, then $h^-_K$ is even (see, for instance, \cite{stevenhagen}, pp. 773-774, for a proof of this fact). Since $h_K=h_K^+h_K^-$, it follows that $h_K^-$ is odd if and only if $h_K$ is odd. Since the discriminant of $K$ is a power of $q$ (prime), Theorem \ref{thm-garbanati}, part (2), implies that $h^-_K$ is odd if and only if $C^+_L = (C_L)^2$. Hence, the Davis--Taussky conjecture holds if and only if $h_K^-$ is odd, if and only if $h_K$ is odd.
\end{proof}

We conclude this section with some remarks about how to compute an upper bound of $\rho_\infty$ in the cyclic case, working in coordinates over $\FF_2$. Let $L$ be a cyclic extension of $\QQ$ of degree $p>2$, and let $\Gal(L/\QQ)=\langle \sigma \rangle$. Then, $L$ is totally real (since $L$ is Galois over $\QQ$, then it is totally real or totally imaginary, so $[L:\QQ]=p>2$ is odd, and $p=2r_2$ is impossible). Let $u\neq \pm 1$ be a fixed (known) unit in $\OO_L^\times$ and let 
$${\rm res}_\infty(u) = (\varepsilon_1,\ldots,\varepsilon_p)$$
where $\tau_1,\ldots,\tau_p$ are the real embeddings of $L$ and $\varepsilon_i$ is the sign of $\tau_i(u)\in \RR$. We order our embeddings in the following manner. Let $g_u(x)$ be the minimal polynomial of $u$ over $\QQ$, and let $r_1,\ldots,r_p$ be the real roots of $g_u(x)$ ordered so that $r_1<r_2<\cdots <r_p$. Then, $\{r_i\}=\{\tau_j(u)\}$, and we choose $\tau_i$ so that $\tau_i(u)=r_i$ for all $1\leq i\leq p$. With this notation, ${\rm res}_\infty(u) = (-1,-1,-1,\ldots,1,1,1)$, i.e., it consists of a non-negative number of $-1$ signs followed by a non-negative number of $+1$ signs. Recall that $u$ is in the kernel of ${\rm res}_\infty$ if and only if $u$ is a totally positive unit.
	
    Attached to  the generator $\sigma\in \mathcal{G}=\Gal(L/\QQ)$, there is a permutation $\phi=\phi_\sigma\in S_p$, where $\phi$ is considered here as a permutation of $\{1,2,\ldots,n\}$, such that $\tau_i(\sigma(u))=r_{\phi(i)}$, and therefore
    $${\rm res}_\infty(\sigma(u)) = (\varepsilon_{\phi(1)}, \ldots, \varepsilon_{\phi(p)}).$$
    Since $\tau$ is an embedding (injective), if $\tau_i(\alpha)=r_i$ for some $\alpha\in L^\times$, then $\tau_i(\sigma(\alpha)) = r_{\phi(i)}$. It follows that $\tau_i(\sigma^n(u)) = r_{\phi^n(i)}$, and 
       $${\rm res}_\infty(\sigma^n(u)) = (\varepsilon_{\phi^n(1)}, \ldots, \varepsilon_{\phi^n(p)}),$$
       for all $n\geq 1$.
       
Now, in addition, suppose that $u$ is a unit of norm $1$, and let $\mathcal{G}\cdot u$ be the subgroup of units generated by the conjugates of $u$, i.e., 
$$\mathcal{G}\cdot u = \langle u, \sigma(u), \sigma^2(u),\ldots, \sigma^{p-1}(u)\rangle \subseteq \OO_L^\times$$ generate a subgroup of  $\OO_L^\times$. Note that the product $\prod_{n=0}^{p-1} \sigma^n(u)=1$, so  $$\mathcal{G}\cdot u = \langle u, \sigma(u), \sigma^2(u),\ldots, \sigma^{p-2}(u)\rangle.$$ Then, 
$${\rm res}_\infty\left(\mathcal{G}\cdot u\right) = \langle 
(\varepsilon_{\phi^n(1)}, \ldots, \varepsilon_{\phi^n(p)}) : 0\leq n\leq p-2\rangle\subseteq V_\infty,$$
where we have defined $V_\infty = \{ \pm 1\}^p$. If we fix an isomorphism $\psi:\{\pm 1\}\cong \FF_2$, and write $f_i = \psi(\varepsilon_i)$, then the map ${\rm res}_\infty: \mathcal{G}\cdot u \to V_\infty$ can be written in $\FF_2$-coordinates, and the corresponding $p\times (p-1)$ matrix over $\FF_2$ is given by
$$M_{\infty,u} = \left( f_{\phi^j(i)}\right)_{\substack{1\leq i\leq p\\ 0\leq j \leq p-2}} = \left(\begin{array}{cccc} 
f_1 & f_{\phi(1)} & \cdots & f_{\phi^{p-2}(1)} \\ 
f_2 & f_{\phi(2)} & \cdots & f_{\phi^{p-2}(2)} \\
\vdots & \vdots & \ddots & \vdots\\
f_p & f_{\phi(p)} & \cdots & f_{\phi^{p-2}(p)} \\
\end{array} \right). $$
\begin{lemma}\label{lemma-m_infty}
Let $u$ be a unit of norm $1$, and let $d_{\infty,u}$ be the dimension of the column space of $M_{\infty,u}$ or, equivalently, the dimension of ${\rm res}_\infty(\mathcal{G}\cdot u)$. Then, $\rho_\infty \leq (p-1) - d_{\infty,u}$. In particular, if $d_{\infty,u} = p-1$, then $\rho_\infty =0$.
\end{lemma}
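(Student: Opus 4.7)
The plan is to combine Theorem \ref{thm-rhos}(1) with a careful count of the $\FF_2$-dimension of the image $\res_\infty(\OO_L^\times/(\OO_L^\times)^2)$. By part (1) of that theorem,
$$\rho_\infty = p - \dim_{\FF_2} \res_\infty(\OO_L^\times/(\OO_L^\times)^2),$$
so it suffices to exhibit $1+d_{\infty,u}$ linearly independent elements in $\res_\infty(\OO_L^\times/(\OO_L^\times)^2)$.

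First I would observe that $\mathcal{G}\cdot u \subseteq \OO_L^\times$, so $\res_\infty(\mathcal{G}\cdot u)$, which by construction is the column space of $M_{\infty,u}$ and has dimension $d_{\infty,u}$, is a subspace of $\res_\infty(\OO_L^\times/(\OO_L^\times)^2)$. The goal is then to produce one more independent element using the unit $-1 \in \OO_L^\times$. Its image $\res_\infty(-1) = (-1,-1,\ldots,-1)$ translates, under the isomorphism $\psi\colon\{\pm 1\}\cong \FF_2$ with $\psi(-1)=1$, to the all-ones vector in $\FF_2^p$.

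The key observation is that the norm-$1$ hypothesis on $u$ forces the columns of $M_{\infty,u}$ into a proper hyperplane of $V_\infty$. Indeed, since norms are Galois-stable, every conjugate $\sigma^n(u)$ also has norm $1$, so $\prod_{i=1}^{p} \tau_i(\sigma^n(u)) = 1 > 0$, meaning an even number of the signs $\varepsilon_{\phi^n(i)}$ are $-1$. Translating to $\FF_2$, this says $\sum_{i=1}^{p} f_{\phi^n(i)} = 0$ for every $n$, so every column of $M_{\infty,u}$ lies in the hyperplane $H_0 = \{v \in \FF_2^p : \sum v_i = 0\}$. On the other hand, the sum of the coordinates of $\res_\infty(-1)=(1,1,\dots,1)$ is $p$, which is $1$ in $\FF_2$ because $p$ is an odd prime. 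Hence $\res_\infty(-1) \notin H_0$, and in particular it is not in the column space of $M_{\infty,u}$.

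Putting these pieces together, $\res_\infty(\langle -1\rangle \cdot \mathcal{G}\cdot u)$ is the direct sum $\langle \res_\infty(-1)\rangle \oplus \res_\infty(\mathcal{G}\cdot u)$ and has dimension exactly $1 + d_{\infty,u}$. Since this image is contained in $\res_\infty(\OO_L^\times/(\OO_L^\times)^2)$, we conclude that
$$\rho_\infty = p - \dim_{\FF_2} \res_\infty(\OO_L^\times/(\OO_L^\times)^2) \leq p - (1+d_{\infty,u}) = (p-1) - d_{\infty,u},$$
and the final assertion $d_{\infty,u}=p-1 \Rightarrow \rho_\infty=0$ is immediate. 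There is no real obstacle here beyond the bookkeeping: the crux is simply recognizing that the norm-$1$ condition places the conjugates of $u$ inside the even-weight hyperplane while the oddness of $p$ places $-1$ outside it, which gains the extra dimension that gives the bound $(p-1)-d_{\infty,u}$ rather than the weaker $p-d_{\infty,u}$.
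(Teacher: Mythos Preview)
Your proof is correct and follows essentially the same approach as the paper: both arguments use the norm-$1$ hypothesis to separate $\res_\infty(-1)$ from $\res_\infty(\mathcal{G}\cdot u)$, thereby gaining the extra dimension. Your write-up is somewhat more explicit---you identify the even-weight hyperplane $H_0$ and place the column space inside it---whereas the paper argues more tersely via norms, but the underlying idea is identical.
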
 
\begin{proof}
If $u$ is of norm $1$, then $-1\not\in \mathcal{G}\cdot u$, because the norm of $-1$ is $-1$, and the norm of every element in $\mathcal{G}\cdot u$ is $1$. In particular, $\langle \res_\infty(-1), \res_\infty(\mathcal{G}\cdot u)\rangle$ is a space of dimension $1+d_{\infty,u}$. Hence, the kernel of $\res_\infty$ is at most of dimension $p-(1+d_{\infty,u})$. It follows that $\rho_\infty\leq (p-1)-d_{\infty,u}$ as desired.
\end{proof}

\subsection{Totally positive units in cyclic extensions of prime degree}\label{sec-totposprimedegree}

Let $L$ be a cyclic extension of prime degree $p>2$, and let $\OO_L^\times$ be the unit group of $\OO_L$. Let $\OO_L^{\times, 1}$ be the units of norm $1$, so that $\OO_L^\times \cong \{ \pm 1 \}\times \OO_L^{\times,1}$. In this section we show the following result:

\begin{prop}\label{prop-p3p5totpos} Let $p\geq 3$ be a prime, let $L$ be a cyclic extension of degree $p$, and suppose that the polynomial $\phi_p(x)=(x^p-1)/(x-1)$ is irreducible over $\FF_2$. Then, either $\rho_\infty=0$ (i.e., $\OO_L^{\times,+} = (\OO_L^{\times})^2$), or $\rho_\infty = p-1$ in which case every unit in $\OO_L^{\times,1}$ is totally positive.
\end{prop}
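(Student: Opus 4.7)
The plan is to equip the quotient $W := \OO_L^{\times,1}/(\OO_L^{\times,1})^2$ with the structure of a one-dimensional $\FF_{2^{p-1}}$-vector space, and then to observe that $\OO_L^{\times,+}/(\OO_L^\times)^2$ sits inside $W$ as a $\Gal(L/\QQ)$-stable subspace, forcing it to be either $\{0\}$ or all of $W$.

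First, I would isolate the norm-one units. Since $[L:\QQ]=p$ is odd, $N(-1)=-1$, and every unit of $\OO_L^\times$ is of the form $\pm v$ with $N(v)=1$, giving $\OO_L^\times=\langle -1\rangle\times \OO_L^{\times,1}$ and in particular $(\OO_L^\times)^2=(\OO_L^{\times,1})^2$. Because $p\geq 3$, the unit $-1$ is not totally positive, so $\OO_L^{\times,+}\subseteq \OO_L^{\times,1}$, and the natural map $\OO_L^{\times,+}/(\OO_L^\times)^2\to W$ is injective (squares are automatically totally positive, so the kernel is trivial). Note $\dim_{\FF_2}W=p-1$ because $\OO_L^{\times,1}\cong \ZZ^{p-1}$.

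Next I would exploit the Galois action. Write $\mathcal{G}=\Gal(L/\QQ)=\langle\sigma\rangle$. Multiplicatively, $\prod_{i=0}^{p-1}\sigma^i(u)=N(u)=1$ for every $u\in\OO_L^{\times,1}$, so the element $1+\sigma+\cdots+\sigma^{p-1}\in\FF_2[\mathcal{G}]$ annihilates $W$. Thus $W$ is a module over
$$R:=\FF_2[\mathcal{G}]/(1+\sigma+\cdots+\sigma^{p-1})\;\cong\;\FF_2[x]/\phi_p(x).$$
By hypothesis $\phi_p(x)$ is irreducible over $\FF_2$, so $R$ is the field $\FF_{2^{p-1}}$. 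Comparing $\FF_2$-dimensions forces $W$ to be one-dimensional over $R$. Moreover $\OO_L^{\times,+}/(\OO_L^\times)^2$ is $\mathcal{G}$-stable, since $\sigma$ permutes the real embeddings and therefore preserves total positivity; hence it is an $R$-submodule of $W$.

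A one-dimensional vector space over a field has only the trivial submodules, which yields the dichotomy. In the first case $\OO_L^{\times,+}/(\OO_L^\times)^2=0$, so $\rho_\infty=0$. In the second case $\OO_L^{\times,+}/(\OO_L^\times)^2=W$ has $\FF_2$-dimension $p-1$, so $\rho_\infty=p-1$; and then any $v\in \OO_L^{\times,1}$ agrees mod $(\OO_L^\times)^2$ with some totally positive $u$, so $v=uw^2$ for some $w\in\OO_L^\times$, and since $w^2$ is totally positive so is $v$. The only point that requires care is the module-theoretic identification of $W$—verifying that $(\OO_L^\times)^2=(\OO_L^{\times,1})^2$ so the inclusion $\OO_L^{\times,+}/(\OO_L^\times)^2\hookrightarrow W$ is well defined, and that the norm relation actually descends to the $\FF_2$-quotient—but both follow immediately from the decomposition $\OO_L^\times=\langle -1\rangle\times\OO_L^{\times,1}$. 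Everything after that is linear algebra over $\FF_{2^{p-1}}$.
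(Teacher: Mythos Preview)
Your proof is correct and rests on the same module-theoretic observation as the paper's: when $\phi_p$ is irreducible over $\FF_2$, the group ring $\FF_2[\mathcal{G}]$ has only the trivial representation and one $(p-1)$-dimensional irreducible, so any $\mathcal{G}$-stable subspace of $W=\OO_L^{\times,1}/(\OO_L^{\times,1})^2$ is either $0$ or all of $W$.

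The framing, however, is slightly different and arguably cleaner than the paper's. The paper argues by contrapositive: it picks a norm-one unit $u$ that is \emph{not} totally positive, forms the cyclic $\FF_2[\mathcal{G}]$-submodule $M=\FF_2[\mathcal{G}]\cdot\overline{u}$, shows $M$ is $(p-1)$-dimensional, and then argues that the signature map is injective on $M$ so that every norm-one signature is hit, forcing $\rho_\infty=0$. You instead observe directly that $\OO_L^{\times,+}/(\OO_L^\times)^2$ is itself a $\mathcal{G}$-stable subspace of the one-dimensional $\FF_{2^{p-1}}$-vector space $W$, so it must be $0$ or $W$. Your route avoids the need to choose a particular unit and to analyze the signature map on the cyclic submodule it generates; the paper's route makes the connection to signatures more explicit. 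Both arguments are short and use exactly the same structural input.
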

\begin{proof}
If every unit in $\OO_L^{\times,1}$ is totally positive, then $\rho_\infty=p-1$ since we would have
$$\rho_\infty = \dim_{\FF_2} \OO_L^{\times,+}/(\OO_L^\times)^2 = \dim_{\FF_2} \OO_L^{\times,1}/(\OO_L^\times)^2= p-1.$$
Otherwise, there must be a unit $u\in \OO_L^{\times,1}$ that is not totally positive (in particular, $u$ is not in $\pm 1 \cdot  (\OO_L^{\times})^2$). Let $G=\Gal(L/\QQ)=\langle \sigma \rangle \cong \ZZ/p\ZZ$, and let $u_i = \sigma^{i}(u)$ for $i=0,\ldots,p-1$, be the conjugates of $u$. Let $\tau$ be a fixed embedding of $L$ in $\RR$, let $\tau(u_i)=\varepsilon_i \in \{\pm 1 \}$ for $i=0,\ldots,p-1$, and order the embeddings $\tau=\tau_0,\ldots,\tau_{p-1}$ of $L$ such that $\res_\infty(u)=(\varepsilon_0,\ldots,\varepsilon_{p-1})$. In other words, $\tau_i = \tau\circ \sigma^i$. Thus, 
$$\res_\infty(\sigma(u)) = \res_\infty(u_1) = (\varepsilon_1,\ldots,\varepsilon_{p-1},\varepsilon_0).$$

Consider the class $\overline{u} \in \OO_L^{\times}/(\OO_L^\times)^2$ and its non-trivial signature $\res_\infty(u)$. The group ring $\FF_2[G]$ acts on the module $M=\FF_2[G]\cdot \overline{u}$. Since $G$ is of prime order and by assumption $\phi_p(x)$ is irreducible, it follows that $M$ is irreducible. Furthermore, since $u\neq \pm 1$ it follows that  $\res_\infty(u)\neq (1,1,\ldots,1)$ or $(-1,-1,\ldots,-1)$. Hence, $\res_\infty(\sigma(u))\neq \res_\infty(u)$ by our formula above, and therefore $M$ is not $1$-dimensional.

Moreover, 
$$\FF_2[G]\cong \FF_2[x]/(x^p-1)\cong \FF_2[x]/(x-1) \oplus \FF_2[x]/(\phi_p(x)),$$
Since we are assuming that $\phi_p(x)$ is irreducible over $\FF_2$, the only irreducible representations of $G$ over $\FF_2$ are the trivial ($1$-dimensional) representation, and a representation of dimension $p-1$. Since the irreducible $\FF_2[G]$-module $M$ is not $1$-dimensional, it must be $(p-1)$-dimensional. Finally, we note that $M \subseteq \OO_L^{\times,1}/(\OO_L^\times)^2$, and every unit class in $M$ has non-trivial signature (the norm is $1$ and there are $p>2$ signs, so both $1$ and $-1$ appear in the signature). Since the dimension of all possible signatures in $\OO_L^{\times,1}$ is $p-1$, and $M$ is $(p-1)$-dimensional, we conclude that all signatures occur, and therefore $\rho_\infty=0$, as desired. 
\end{proof}

We conclude this section quoting a conjectural density of cubic fields and quintic fields with maximal $\rho_\infty$, which is part of a broader conjecture of Dummit and Voight (see \cite{dummit}). 

\begin{conj}[{\cite{dummit}}]\label{conj-dummit}
Let $p=3$ (resp. $p=5$). As $L$ varies over all totally real fields of degree $p$ ordered by absolute discriminant, the density of such fields with $\rho_\infty=2$ (resp. $\rho_\infty=4$) is approximately $1.9\%$ (resp. $0.000019\%$). 
\end{conj}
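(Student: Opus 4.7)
Since this is an open conjecture predicting precise numerical densities, my proposal is a heuristic derivation rather than a rigorous proof. The starting observation is that, after fixing an ordering of the $p$ real embeddings of a totally real field $L$ of degree $p$ and of a system of fundamental units $u_1,\ldots,u_{p-1}$, the signature map $\res_\infty$ (restricted to $\OO_L^\times/(\OO_L^\times)^2$) is encoded in a matrix $M_L \in \mathrm{Mat}_{p\times p}(\FF_2)$ whose first column is $(1,\ldots,1)$ (the signature of $-1$) and whose remaining $p-1$ columns come from the $u_i$. By Theorem \ref{thm-rhos}(1), the maximal value $\rho_\infty = p-1$ is attained precisely when the column space of $M_L$ is the line spanned by the $-1$ column; equivalently, when a system of fundamental units can be chosen to be totally positive. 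Note $p-1=2$ for $p=3$ and $p-1=4$ for $p=5$, matching the statement.

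The plan has three ingredients. First, count totally real fields of degree $p$ with $|\disc(L)|\leq X$: for $p=3$ this is Davenport--Heilbronn (with sharp error terms by Bhargava--Shankar--Tsimerman and Taniguchi--Thorne), and for $p=5$ it is Bhargava's asymptotic for quintic fields. Second, invoke the Dummit--Voight probabilistic model: as $L$ ranges over this family ordered by discriminant, the matrix $M_L$ should equidistribute in $\mathrm{Mat}_{p\times p}(\FF_2)$ subject only to the fixed first column. Third, under this equidistribution the probability that the remaining $p-1$ columns each lie in the span of $(1,\ldots,1)$ is an explicit finite-group-theoretic quantity whose numerical value is $\approx 1.9\%$ for $p=3$ and $\approx 0.000019\%$ for $p=5$, matching the stated proportions.

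The main obstacle is Step 3: upgrading the Dummit--Voight heuristic to a theorem. Even the degree-$2$ analogue, Stevenhagen's conjecture on the density of real quadratic fields whose fundamental unit has norm $-1$, is open. A plausible route for $p=3$ is to combine the Delone--Faddeev parameterization of cubic rings by integral binary cubic forms with the Bhargava--Harron equidistribution of shapes of number fields, and to track how the archimedean signs of a fundamental unit system vary with the parameterizing form; an analogous but substantially harder program using Bhargava's quintic parameterization would be needed for $p=5$. In both cases the crux is the absence of any Cohen--Lenstra-style arithmetic-statistics framework for unit signatures, which places a full proof beyond currently available techniques; the most one can realistically aim for in the short term is numerical verification of the predicted densities in tables of fields of small discriminant.
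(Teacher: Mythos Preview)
The statement you are addressing is labeled as a \emph{Conjecture} in the paper and is simply cited from \cite{dummit}; the paper offers no proof or heuristic derivation of its own, so there is nothing in the paper to compare your proposal against. You are right to treat it as open and to aim only for a heuristic.

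That said, your heuristic has a concrete numerical error. Under your model---the signature matrix $M_L$ is uniform in $\mathrm{Mat}_{p\times p}(\FF_2)$ subject only to the first column being $(1,\ldots,1)$---the event $\rho_\infty=p-1$ is exactly the event that each of the remaining $p-1$ columns lies in the $2$-element subspace $\langle(1,\ldots,1)\rangle$. That probability is
\[
\left(\frac{2}{2^{p}}\right)^{p-1}=2^{-(p-1)^2},
\]
which for $p=3$ equals $1/16=6.25\%$ and for $p=5$ equals $2^{-16}\approx 0.0015\%$. Neither matches the conjectured $1.9\%$ and $0.000019\%$. So the sentence ``an explicit finite-group-theoretic quantity whose numerical value is $\approx 1.9\%$ for $p=3$ and $\approx 0.000019\%$ for $p=5$'' is false for the model you actually wrote down.

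The Dummit--Voight heuristic is more refined than a bare uniform-column model: it involves the $2$-Selmer group of the number field (which sits between $\OO_L^\times/(\OO_L^\times)^2$ and the signature space via an exact sequence that also sees $\Cl(L)[2]$), and the predicted density is a weighted average over possible $2$-ranks of the class group with Cohen--Lenstra-type weights. If you want your Step~2 to be a genuine invocation of their model, you need to state that richer setup; the naive equidistribution you describe is not the source of the numbers in the conjecture. Your Step~3 discussion of obstacles (Stevenhagen's conjecture, lack of an arithmetic-statistics framework for unit signatures) is appropriate as an explanation of why this remains open.
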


\begin{remark}\label{rem-abundant} By Prop. \ref{prop-p3p5totpos}, if $L$ is a cyclic field of degree $p=5$, and $\rho_\infty\neq 4$, then it must be $0$. By Conjecture \ref{conj-dummit}, approximately $99.999981\%$ of all totally real quintic fields conjecturally have $\rho_\infty\neq 4$. Thus, we expect  that cyclic quintic fields with $\rho_\infty=0$ must be quite abundant. Note, however, that cyclic quintic fields are a subset of density $0$ among all totally real quintic fields, so the conjectures of Dummit and Voight do not apply directly here.
\end{remark}

\subsection{Refinements of the bound on the rank}\label{sec-muchbetterbounds}

Now we are ready to improve the bound in Proposition \ref{prop-Sel2_UB}. We will continue using the notation of Section \ref{sec-totally}.

\begin{prop}\label{prop1} Let $p$ be an odd prime, let $C: y^2=f(x)$ with $f(x)$ of degree $p$ (and genus $g=(p-1)/2$), such that $L$, the number field defined by $f(x)$, is totally real of degree $p$, and let $J/\QQ$ be the jacobian of $C/\QQ$. Let $\rho_\infty = \dim_{\FF_2} \OO_L^{\times,+}/(\OO_L^\times)^2$, and  let $j_\infty = \dim_{\FF_2} (\res_\infty(\OO_L^\times/(\OO_L^\times)^2)\cap J_\infty)$. Then:
\begin{align*}
\dim\Sel^{(2)}(\QQ,J) \leq j_\infty +\rho_\infty + \dim \Cl^+(L)[2] + \dim G\cap \val\left(L_{J_\infty}\right),
\end{align*}
In particular, 
\begin{enumerate}
\item $\rho_\infty+j_\infty\leq p-1$.
\item $j_\infty \leq \dim J_\infty = (p-1)/2 = \operatorname{genus}(C)$.
\item $\dim \Cl^+(L)[2] \leq \rho_\infty + \dim \Cl(L)[2]$. In particular,
\begin{align*}
\dim\Sel^{(2)}(\QQ,J) \leq j_\infty +  2\rho_\infty + \dim \Cl(L)[2] + \dim G\cap \val\left(L_{J_\infty}\right),
\end{align*}
\item If $G\cap \val\left(L_{J_\infty}\right)$ is trivial, then $$\dim\Sel^{(2)}(\QQ,J) \leq j_\infty + \rho_\infty +  \dim \Cl^+(L)[2]\leq j_\infty +2\rho_\infty +  \dim \Cl(L)[2].$$
\end{enumerate}
\end{prop}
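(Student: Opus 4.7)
The plan is to begin from the bound
\begin{equation*}
\dim\Sel^{(2)}(\QQ,J) \leq \dim\left(\OO_L^\times/(\OO_L^\times)^2 \cap \res_\infty^{-1}(J_\infty)\right) + \dim \Cl^+(L)[2] + \dim G\cap \val\left(L_{J_\infty}\right)
\end{equation*}
already established in equation (\ref{newbound}) of Section \ref{sec-betterbounds}, and to refine its unit contribution. Restricting the group homomorphism $\res_\infty\colon \OO_L^\times/(\OO_L^\times)^2 \to V_\infty$ to the subgroup $\OO_L^\times/(\OO_L^\times)^2\cap\res_\infty^{-1}(J_\infty)$ yields the short exact sequence of $\FF_2$-vector spaces
\begin{equation*}
0 \to \OO_L^{\times,+}/(\OO_L^\times)^2 \to \OO_L^\times/(\OO_L^\times)^2 \cap \res_\infty^{-1}(J_\infty) \to \res_\infty(\OO_L^\times/(\OO_L^\times)^2) \cap J_\infty \to 0,
\end{equation*}
whose outer terms have dimensions $\rho_\infty$ and $j_\infty$ by definition. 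Substituting $\rho_\infty+j_\infty$ for the unit contribution in (\ref{newbound}) gives the main inequality of the proposition.

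For part (1), I would use Dirichlet's unit theorem: since $L$ is totally real of degree $p$, we have $\dim_{\FF_2}\OO_L^\times/(\OO_L^\times)^2 = p$ (rank $p-1$ plus the class of $-1$), so the image $\res_\infty(\OO_L^\times/(\OO_L^\times)^2)\subseteq V_\infty$ has dimension $p-\rho_\infty$. The key point is that $\res_\infty(-1) = (-1,\ldots,-1)$ lies in this image but \emph{not} in $J_\infty\subseteq H_\infty$: indeed $H_\infty$ is the kernel of the norm $L_\infty^\times/(L_\infty^\times)^2 \to \RR^\times/(\RR^\times)^2$, and $(-1)^p = -1$ is non-trivial modulo squares because $p$ is odd. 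Therefore $\res_\infty(\OO_L^\times/(\OO_L^\times)^2) \cap J_\infty$ is a \emph{proper} subspace of $\res_\infty(\OO_L^\times/(\OO_L^\times)^2)$, forcing $j_\infty \leq p-1-\rho_\infty$, i.e., $\rho_\infty+j_\infty\leq p-1$.

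For (2), since $L$ is totally real of degree $p$, the polynomial $f$ splits completely over $\RR$ into $p$ real linear factors, so $m_\infty = p$; Lemma \ref{lem-Real_dims}(1) then gives $\dim J_\infty = m_\infty - 1 - g = p-1-g = g$, and the bound on $j_\infty$ follows from the inclusion $\res_\infty(\OO_L^\times/(\OO_L^\times)^2)\cap J_\infty \subseteq J_\infty$. For (3), applying the snake lemma for multiplication by $2$ to the exact sequence
\begin{equation*}
0 \to \{\pm 1\}^p/\res_\infty(\OO_L^\times/(\OO_L^\times)^2) \to \Cl^+(L) \to \Cl(L) \to 0
\end{equation*}
of Theorem \ref{thm-rhos}(2) produces the left-exact sequence $0\to A\to \Cl^+(L)[2]\to \Cl(L)[2]$, in which $A$ is the leftmost term (already $2$-torsion of dimension $\rho_\infty$); hence $\dim\Cl^+(L)[2] \leq \rho_\infty + \dim\Cl(L)[2]$, and substitution into the main bound proves (3). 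Finally, (4) is immediate from the main bound by setting $\dim G\cap \val(L_{J_\infty}) = 0$ and applying (3).

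The main work has been done in Section \ref{sec-betterbounds}, where (\ref{newbound}) was derived; what remains is essentially dimension bookkeeping via the exact sequence above. The one step requiring care is ensuring that $\res_\infty(-1)\notin J_\infty$ in (1), which is the precise point where the oddness of $p$ enters.
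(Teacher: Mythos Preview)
Your proof is correct and follows essentially the same approach as the paper's: both start from the bound (\ref{newbound}), decompose the unit term $\dim\bigl(\OO_L^\times/(\OO_L^\times)^2\cap\res_\infty^{-1}(J_\infty)\bigr)$ as $j_\infty+\rho_\infty$ via the restriction of $\res_\infty$, and handle (1) by observing that $\res_\infty(-1)\notin J_\infty$ because $p$ is odd. The only minor difference is that for (3) you spell out the snake-lemma argument explicitly, whereas the paper simply cites the inequality $\rho^+\leq\rho_\infty+\rho$ from Theorem~\ref{thm-rhos}(2); these amount to the same thing.
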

\begin{proof}
Let $p$ be an odd prime, let $C: y^2=f(x)$ with $f(x)$ of degree $p$, such that $L$, the number field defined by $f(x)$, is totally real of degree $p$, and let $J/\QQ$ be the jacobian of $C/\QQ$. Recall that in Section \ref{sec-betterbounds} we showed
\begin{align*}
\dim\Sel^{(2)}(\QQ,J)& \leq \dim(\widehat{H}) = \dim U + \dim V + \dim W \\ 
& = \dim \left(\OO_L^\times/(\OO_L^\times)^2\cap \res_\infty^{-1}(J_\infty)\right) + \dim \Cl(L_{J_\infty})[2] + \dim G\cap \text{val}\left(L_{J_\infty}\right).
\end{align*}
Clearly,
\begin{align*} \dim \left(\OO_L^\times/(\OO_L^\times)^2\cap \res_\infty^{-1}(J_\infty)\right) &= \dim \res_\infty(\OO_L^\times/(\OO_L^\times)^2)\cap J_\infty + \dim \ker (\res_\infty|_{\OO_L^\times/(\OO_L^\times)^2})\\
 &= j_\infty + \dim \OO_L^{\times,+}/(\OO_L^\times)^2 = j_\infty + \rho_\infty. 
\end{align*}
Now, for part (1), notice that $\left(\OO_L^\times/(\OO_L^\times)^2\cap \res_\infty^{-1}(J_\infty)\right)\subseteq \OO_L^\times/(\OO_L^\times)^2$ so the dimension as a $\FF_2$-vector space is at most $p$. Moreover, $\res_\infty(-1)$ is not in $J_\infty$ (because $J_\infty=\delta_\infty(J(\RR))\subseteq H_\infty$, which is the kernel of the norm map, so $N_\QQ^L(j)=1$ for $j\in L$ such that $\res_\infty(j)\in J_\infty$, but $N_\QQ^L(-1)=-1$ since the degree of $L$ is odd). Thus, $j_\infty+\rho_\infty = \dim \left(\OO_L^\times/(\OO_L^\times)^2\cap \res_\infty^{-1}(J_\infty)\right)\leq p-1$, as claimed.

For part (2), recall that we have defined $J_\infty = \delta_\infty(J(\RR)/2J(\RR))\subseteq H_\infty$. By Lemma \ref{lem-Real_dims}, and the fact that $\delta_\infty$ is injective (Lemma 4.1 in \cite{stoll}), we have 
$$\dim(J_\infty) = m_\infty-1-g = p-1 - \frac{p-1}{2} = \frac{p-1}{2},$$
where we have used the fact that $L$ is totally real to claim that $m_\infty=p$.

Part (3) follows from Theorem \ref{thm-rhos}, which shows that $\rho^+\leq \rho_\infty + \rho$. And part (4) is immediate from (3), so the proof is complete.
\end{proof} 

Now we can put together Corollary \ref{cor-order2iseven} and Proposition \ref{prop1} to give a bound in the cases when the multiplicative order of $2\bmod p$ is even.

\begin{thm}\label{thm-totally_pos} Suppose $L$ is a cyclic, totally real number field of degree $p>2$, such that the order of $2$ in $(\ZZ/p\ZZ)^\times$ is even. Then, in the notation of Proposition \ref{prop1}, we have
\begin{align*}
\dim\Sel^{(2)}(\QQ,J) & \leq j_\infty + \rho_\infty + \dim \Cl(L)[2] + \dim\ker\big(G\to\Cl(L)/2\Cl(L)\big)\\
& \leq p-1 + \dim \Cl(L)[2] + \dim\ker\big(G\to\Cl(L)/2\Cl(L)\big).
\end{align*}
Moreover, if $\rho_\infty=0$, then 
\begin{align*}
\dim\Sel^{(2)}(\QQ,J) & \leq g + \dim \Cl(L)[2] + \dim\ker\big(G\to\Cl(L)/2\Cl(L)\big)\\
& = \frac{p-1}{2} + \dim \Cl(L)[2] + \dim\ker\big(G\to\Cl(L)/2\Cl(L)\big).
\end{align*}
\end{thm}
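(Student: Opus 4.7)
The strategy is to assemble this directly out of pieces already established in this section. I would start from the sharpest form of the bound, namely Proposition \ref{prop1} (and in particular its third and fourth conclusions), which give
\[
\dim\Sel^{(2)}(\QQ,J) \leq j_\infty + \rho_\infty + \dim \Cl^+(L)[2] + \dim\bigl(G\cap \val(L_{J_\infty})\bigr).
\]
The hypothesis that the multiplicative order of $2$ in $(\ZZ/p\ZZ)^\times$ is even is tailor-made for Corollary \ref{cor-order2iseven}, which yields $\dim_{\FF_2}\Cl^+(L)[2] = \dim_{\FF_2}\Cl(L)[2]$ in our cyclic setting. This is the single substantive input of the proof; everything else is bookkeeping.

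Next, I would bound the last term by recalling the containment established in Section \ref{sec-betterbounds}: since every $\xi \in G\cap \val(L_{J_\infty})$ arises as $\val(\mu)$ for some $\mu \in L^\times/(L^\times)^2$, the group $G\cap \val(L_{J_\infty})$ is contained in $G\cap \val(L^\times/(L^\times)^2) = \ker\bigl(G\to \Cl(L)/2\Cl(L)\bigr)$. Combining these two substitutions with the displayed inequality from Proposition \ref{prop1} produces
\[
\dim\Sel^{(2)}(\QQ,J) \leq j_\infty + \rho_\infty + \dim \Cl(L)[2] + \dim\ker\bigl(G\to\Cl(L)/2\Cl(L)\bigr),
\]
which is the first assertion. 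The second inequality in the statement follows immediately by applying Proposition \ref{prop1}(1), which says $j_\infty + \rho_\infty \leq p-1$.

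For the last assertion, assume $\rho_\infty = 0$. Then the first term simplifies to $j_\infty$, and by Proposition \ref{prop1}(2) we have $j_\infty \leq \dim J_\infty = (p-1)/2 = g$. Substituting gives the final bound
\[
\dim\Sel^{(2)}(\QQ,J) \leq g + \dim\Cl(L)[2] + \dim\ker\bigl(G\to\Cl(L)/2\Cl(L)\bigr),
\]
completing the proof. There is no genuine obstacle here: the work has all been done in Section \ref{sec-betterbounds} (where $\widehat{H}$ and the filtration $U, V, W$ were set up) and in Section \ref{sec-totally} (where Oriat's theorem was invoked to deduce $\rho=\rho^+$ from the parity condition on $\operatorname{ord}_p(2)$). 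The theorem is best viewed as the explicit packaging of those two refinements into one usable bound.
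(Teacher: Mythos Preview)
Your proposal is correct and follows essentially the same approach as the paper: invoke Corollary \ref{cor-order2iseven} to get $\rho=\rho^+$ (so $\dim\Cl^+(L)[2]=\dim\Cl(L)[2]$), and then read off the bound from Proposition \ref{prop1}. The paper's own proof is a terse two-line version of exactly this, leaving implicit the bookkeeping you spell out (the containment $G\cap\val(L_{J_\infty})\subseteq\ker(G\to\Cl(L)/2\Cl(L))$ from Section \ref{sec-betterbounds} and the use of parts (1) and (2) of Proposition \ref{prop1}).
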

\begin{proof}
Suppose $L$ is a cyclic, totally real number field of degree $p>2$, such that the order of $2$ in $(\ZZ/p\ZZ)^\times$ is even. Then, Corollary \ref{cor-order2iseven} implies that $\rho=\rho^+$. Thus, the bound follows from Proposition \ref{prop1}. Note that the bound $p-1+ \dim \Cl(L)[2] + \dim\ker\big(G\to\Cl(L)/2\Cl(L)\big)$ is the bound that appears in Proposition \ref{stollbnd}.
\end{proof}

\section{Genus 1}\label{sec-genus1}

The goal of this section is to show an alternative proof of Theorem \ref{thm-wash}, using Stoll's implementation of the $2$-descent algorithm and the results we showed in the previous section. Let $m$ be an integer such that $D=m^2+3m+9$ is square-free. Let $C$ be the (hyper)elliptic curve given by the Weierstrass equation
\[
C: y^2 = f_m(x)= x^3+mx^2-(m+3)x+1.
\]
Since $C$ is elliptic, the jacobian $J$ is isomorphic to $C$, so we will identify $C$ with $J$. We will conclude the bound stated in the theorem as a consequence of Theorem \ref{thm-totally_pos}. Since the order of $2\equiv -1 \bmod 3$ is $2$, Corollary \ref{cor-order2iseven} shows that $\rho^+=\rho$. In Section \ref{sec-totposprimedegree} we discussed that cyclic cubic fields with $\rho_\infty\neq 0$ are rare. Let us show that in fact $\OO_L^{\times,+} = (\OO_L^\times)^2$, and therefore $\rho_\infty=0$, for the cyclic cubic fields $L=L_m$ defined by $f_m(x)=0$.

\begin{lemma}[{\cite[\S 1, p. 371]{Wash1}}]\label{lem-wash} Let $m$ be an integer such that $m\equiv 3 \bmod 9$, and let $L$ be the number field defined by $f_m(x)= x^3+mx^2-(m+3)x+1=0$. Then, $\rho_\infty=0$.
\end{lemma}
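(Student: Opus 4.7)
The plan is to apply Proposition \ref{prop-p3p5totpos} directly. The field $L = L_m$ is a cyclic cubic extension of $\QQ$: the polynomial $f_m$ is irreducible over $\QQ$ (by the rational root test, since $f_m(1) = -1$ and $f_m(-1) = 2m+3 \ne 0$ for $m \in \ZZ$), and the discriminant $\disc(f_m) = (m^2+3m+9)^2$ is a square, so the Galois closure of $L$ has Galois group contained in $A_3 = \ZZ/3\ZZ$, forcing $L/\QQ$ to be cyclic of degree $3$. Moreover $\phi_3(x) = x^2 + x + 1$ is irreducible over $\FF_2$. Hence Proposition \ref{prop-p3p5totpos} yields the dichotomy $\rho_\infty \in \{0, 2\}$, with the value $2$ arising only when every element of $\OO_L^{\times,1}$ is totally positive.

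Accordingly, I would rule out $\rho_\infty = 2$ by exhibiting a single unit of norm $+1$ in $\OO_L$ that is not totally positive. Let $\alpha$ be a root of $f_m(x)$. Since $f_m(0) = 1$, $\alpha$ is a unit, and since the product of the three roots of $f_m$ equals $-1$, we have $N_{L/\QQ}(\alpha) = -1$; therefore $u := -\alpha$ is a unit with $N_{L/\QQ}(u) = +1$.

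To check that $u$ is not totally positive, I would apply the intermediate value theorem to $f_m$: from $f_m(0) = 1 > 0$, $f_m(1) = -1 < 0$, $f_m(x) \to -\infty$ as $x \to -\infty$, and $f_m(x) \to +\infty$ as $x \to +\infty$, the three real roots of $f_m$ lie one each in $(-\infty, 0)$, $(0, 1)$, and $(1, +\infty)$. Consequently, under the three real embeddings $\tau_1, \tau_2, \tau_3$ of $L$, the conjugates $\tau_i(\alpha)$ do not all share a sign; so $\res_\infty(u) = \res_\infty(-\alpha) \ne (+,+,+)$, showing that $u$ is not totally positive. Proposition \ref{prop-p3p5totpos} then forces $\rho_\infty = 0$, which is what we wanted.

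The argument is essentially formal once Proposition \ref{prop-p3p5totpos} is invoked, and I do not anticipate a substantive obstacle; the only real content is the elementary sign analysis of the real roots of $f_m$, which is uniform in the integer parameter $m$ (and in particular holds under the congruence $m \equiv 3 \pmod{9}$).
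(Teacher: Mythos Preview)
Your proof is correct, but it takes a different route from the paper's. The paper argues directly: it identifies the three real roots explicitly via the relations $\alpha' = 1/(1-\alpha)$ and $\alpha'' = 1 - 1/\alpha$, pins down their locations as $-m-2 < \alpha < -m-1 < 0 < \alpha' < 1 < \alpha'' < 2$, and then observes that the signatures of $\alpha$, $\alpha'$, $\alpha''$ (together with $-1$) already fill out all of $\{\pm 1\}^3$, so $\res_\infty$ is surjective and $\rho_\infty = 0$.

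Your approach instead invokes Proposition~\ref{prop-p3p5totpos} to get the dichotomy $\rho_\infty \in \{0,\,p-1\}$, and then only needs to produce a \emph{single} norm-one unit that is not totally positive --- which you do with the much lighter sign check $f_m(0)=1>0>f_m(1)=-1$. This is cleaner and more uniform in $m$ (you never need the sharper estimate $-m-2<\alpha<-m-1$, which tacitly uses $m\geq 0$). The cost is that you rely on the representation-theoretic content of Proposition~\ref{prop-p3p5totpos}; the paper's argument is self-contained at the level of explicit units. Both yield the same conclusion, and your version arguably illustrates better why Proposition~\ref{prop-p3p5totpos} is useful in practice.
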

\begin{proof}
Let $\alpha$ be the negative root of $f_m(x)$. Then $\alpha'=1/(1-\alpha)$, and $\alpha''=1-1/\alpha$ are the two other roots, and in fact they are units in $\OO_L^\times$. Moreover,
$$-m-2<\alpha<-m-1<0<\alpha'<1<\alpha''<2$$
and therefore all eight possible sign signatures may be obtained from $\alpha$ and its conjugates. Thus, every totally positive unit is a square, and $\rho_\infty=0$, as claimed. 
\end{proof} 

Thus, in order to prove Theorem \ref{thm-wash}, it is enough to show that $G$ is trivial.

\begin{lemma} \label{lem-genus1b}
Let $m\geq 0$ be an integer such that $D=m^2+3m+9$ is square-free. Let $v=2$, or let $v$ be a prime divisor of $D$, and let $f_m(x)=x^3+mx^2-(m+3)x+1$. Then, $f_m(x)$ is irreducible as a polynomial in  $\QQ_v[x]$.
\end{lemma}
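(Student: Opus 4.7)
The plan is to exploit the cyclic cubic structure of $L_m=\QQ[x]/(f_m(x))$ to reduce the problem to a simple dichotomy: over any $\QQ_v$, either $f_m$ is irreducible or it splits completely as three distinct linear factors. First I would verify that $f_m$ is irreducible over $\QQ$: as a monic cubic with $f_m(1)=-1$ and $f_m(-1)=2m+3\neq 0$, the rational root theorem shows $f_m$ has no rational root, hence is irreducible. Combined with the standard identity $\disc(f_m)=D^2$ (a perfect square), this forces $\Gal(L_m/\QQ)\cong\ZZ/3\ZZ$. Since $L_m/\QQ$ is Galois of prime degree $3$, for each rational prime $v$ the $\QQ_v$-algebra $L_m\otimes_\QQ\QQ_v$ is either a cubic field extension of $\QQ_v$ or a product $\QQ_v\times\QQ_v\times\QQ_v$, giving the claimed dichotomy; it therefore suffices to rule out the completely-split case.

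For $v=2$ the argument is immediate: one checks that $f_m(x)\equiv x^3+x+1\pmod{2}$ if $m$ is even and $f_m(x)\equiv x^3+x^2+1\pmod{2}$ if $m$ is odd, and each of these cubics is irreducible over $\FF_2$ because neither $0$ nor $1$ is a root. Hence $f_m$ is irreducible modulo $2$, and a fortiori over $\QQ_2$.

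For a prime $v\mid D$, I would first observe that $3\nmid D$: if $3\mid m$ then $9\mid m^2+3m+9=D$, contradicting the squarefreeness of $D$. In particular $v\neq 3$, and I may set $a=-m/3\in\FF_v$. From $m^2\equiv -(3m+9)\pmod v$ one obtains $m^3\equiv 27\pmod v$, and expanding $(x-a)^3=x^3-3ax^2+3a^2x-a^3$ with $a=-m/3$ yields
\[
f_m(x)\equiv (x-a)^3\pmod v.
\]
Now if $f_m$ were to split over $\QQ_v$ as a product $\prod_{i=1}^{3}(x-\alpha_i)$ with distinct roots $\alpha_i\in\QQ_v$, then $\alpha_1\alpha_2\alpha_3=-1$ would force each $\alpha_i\in\ZZ_v^\times$, and reducing modulo $v$ and comparing with $(x-a)^3$ would force $\alpha_i\equiv a\pmod v$ for every $i$. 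Consequently $\alpha_i-\alpha_j$ would have $v$-adic valuation at least $1$ for every $i\neq j$, so the $v$-adic valuation of $\disc(f_m)=\prod_{i<j}(\alpha_i-\alpha_j)^2$ would be at least $6$. But $\disc(f_m)=D^2$ and $v$ divides the squarefree integer $D$ exactly once, so this valuation is exactly $2$---a contradiction. Hence $f_m$ remains irreducible over $\QQ_v$.

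The main obstacle is spotting the cube-factorization $f_m(x)\equiv(x+m/3)^3\pmod v$ for $v\mid D$; this rests on the identity $a^2-a+1\equiv 0\pmod v$ satisfied by $a=-m/3$ whenever $m^2+3m+9\equiv 0\pmod v$. Once this congruence is in hand, the rest of the argument is a direct count of $v$-adic valuations, and the case $v=3$ is excluded automatically by the squarefreeness of $D$.
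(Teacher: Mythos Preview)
Your proof is correct but takes a genuinely different route from the paper for the primes $v\mid D$. The paper performs the translation $x\mapsto x-m/3$ to obtain
\[
f_m(x-m/3)=x^3-\tfrac{D}{3}x+\tfrac{D(2m+3)}{27},
\]
notes that $\gcd(D,2m+3)=1$ (since $4D-(2m+3)^2=27$ and $3\nmid D$), and concludes that this shifted polynomial is Eisenstein at $v$. You instead invoke the global cyclic structure of $L_m/\QQ$ to reduce to the dichotomy ``irreducible or completely split'' over $\QQ_v$, establish the congruence $f_m(x)\equiv(x+m/3)^3\pmod v$, and rule out complete splitting by comparing $v$-adic valuations of the discriminant ($\geq 6$ versus exactly $2$). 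The case $v=2$ is handled identically in both arguments.

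Both approaches rest on the same underlying fact---that $v$ is totally ramified in $L_m$, as witnessed by the cube factorization modulo $v$---but extract it differently. The paper's Eisenstein argument is shorter and entirely local, requiring no preliminary verification that $L_m/\QQ$ is Galois. Your argument foregrounds the Galois structure, which explains conceptually why partial factorizations (one linear plus one quadratic factor) are impossible; this makes the remaining task lighter, since you only need to exclude complete splitting rather than prove irreducibility outright.
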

\begin{proof}
Let $v=2$. Then, if we consider $f_m(x)$ as a polynomial in $\FF_2[x]$, we have
\[
f_m(x)=\begin{cases}
x^3+x^2+1&\text{if }m\equiv 1 \mod 2,\\
x^3+x+1&\text{if }m\equiv 0 \mod 2.
\end{cases}
\]
In both cases, $f_m$ is irreducible over $\FF_2$, hence it is irreducible over $\QQ_2$.

Now, let $v>2$ be a prime divisor of $D$. Then, by assumption, $v>3$ and
\[
f_m(x-m/3)=x^3-\frac{D}{3}x+\frac{D\cdot (2m+3)}{27}
\]
is integral over $\ZZ_v$. Since $D=m^2+3m+9$, it follows that $4D-(2m+3)^2=27$ and therefore the greatest common divisor of $D$ and $2m+3$ divides $27$. Since by assumption $D$ is square-free, it cannot be divisible by 3, and so $\gcd(D,2m+3)=1$; this together with the fact that $D$ is square-free implies that $f_m(x-m/3)$ is Eisenstein over $\ZZ_v$. Hence, $f_m$ is irreducible over $\QQ_v$, as claimed.
\end{proof}

We are now ready to prove Theorem \ref{thm-wash}.

\begin{theorem}[{\cite[Theorem 1]{Wash1}}]\label{thm-wash-2}
Let $m\geq 0$ be an integer such that  $m^2+3m+9$ is square-free. Let $E_m$ be the elliptic curve given by the Weierstrass equation
$$E_m: y^2 = f_m(x)= x^3+mx^2-(m+3)x+1.$$
Let $L_m$ be the number field generated by a root of $f_m(x)$, and let $\operatorname{Cl}(L_m)$ be its class group. Then,
$$\operatorname{rank}_\ZZ(E_m(\QQ)) \leq 1 + \operatorname{dim}_{\FF_2}(\operatorname{Cl}(L_m)[2]).$$
\end{theorem}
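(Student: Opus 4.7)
The strategy is to specialize Theorem \ref{thm-totally_pos} to the case $p=3$ and to verify that both correction terms in the resulting bound vanish for the simplest cubic family. First, the polynomial $f_m$ defines the simplest cubic field $L_m$, which is cyclic of degree $3$ over $\QQ$; this follows from the square-free hypothesis on $D=m^2+3m+9$ together with the standard computation $\disc(f_m)=D^2$. Since the multiplicative order of $2$ modulo $3$ equals $2$, Corollary \ref{cor-order2iseven} gives $\rho=\rho^+$, and Lemma \ref{lem-wash} supplies $\rho_\infty=0$. The second half of Theorem \ref{thm-totally_pos}, with $g=(p-1)/2=1$, therefore specializes to
\begin{align*}
\dim_{\FF_2}\Sel^{(2)}(\QQ,E_m) \leq 1 + \dim_{\FF_2}\Cl(L_m)[2] + \dim_{\FF_2}\ker\bigl(G\to\Cl(L_m)/2\Cl(L_m)\bigr).
\end{align*}

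The heart of the plan is to eliminate the last summand by showing $G=\prod_{v\in S\setminus\{\infty\}}G_v$ is trivial. Recall $S=\{\infty,2\}\cup\{v:v^2\mid\disc(f_m)\}$; since $\disc(f_m)=D^2$ and $D$ is square-free, this set becomes $S\setminus\{\infty\}=\{2\}\cup\{v:v\mid D\}$. By Lemma \ref{lem-genus1b}, $f_m$ is irreducible over $\QQ_v$ for each such $v$, which means $m_v=1$ and hence Lemma \ref{lem-local_dims}(3) gives $\dim_{\FF_2}I_v=m_v-1=0$. Because $G_v\subseteq I_v$, we conclude $G_v=0$ for every $v\in S\setminus\{\infty\}$, and therefore $G=0$.

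With the vanishing of the kernel term in hand, we pass from the Selmer bound to the Mordell--Weil rank using the inequality (\ref{eq-bound}), namely $\rank_\ZZ(E_m(\QQ))\leq \dim_{\FF_2}\Sel^{(2)}(\QQ,E_m)$, noting that $E_m$ is its own jacobian. Combining these three ingredients yields exactly $\rank_\ZZ(E_m(\QQ))\leq 1+\dim_{\FF_2}\Cl(L_m)[2]$, as desired.

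The main obstacle here is conceptual rather than technical: the substantive inputs --- the cyclic cubic structure of $L_m$, the vanishing $\rho_\infty=0$ of totally positive units modulo squares, and the local irreducibility of $f_m$ at every prime in $S$ --- have already been isolated in Corollary \ref{cor-order2iseven}, Lemma \ref{lem-wash}, and Lemma \ref{lem-genus1b}. The one routine verification I would still want to carry out explicitly is the discriminant identity $\disc(f_m)=D^2$, since it is this identity that pins $S\setminus\{\infty,2\}$ down to the prime divisors of $D$, bringing them under the scope of the Eisenstein argument of Lemma \ref{lem-genus1b}.
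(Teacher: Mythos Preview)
Your proposal is correct and follows essentially the same route as the paper's proof: invoke Theorem \ref{thm-totally_pos} with $p=3$, use Corollary \ref{cor-order2iseven} and Lemma \ref{lem-wash} to get $\rho=\rho^+$ and $\rho_\infty=0$, and then kill the $G$-term by combining Lemma \ref{lem-genus1b} with Lemma \ref{lem-local_dims}(3). The paper states the discriminant identity $\disc(f_m)=D^2$ without proof and also records that $E_m(\QQ)[2]$ is trivial (which is not actually needed for the final inequality), but otherwise the arguments coincide.
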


\begin{proof}
We shall use Theorem \ref{thm-totally_pos}. The order of $2\equiv -1 \bmod 3$ is $2$, even, so $\rho=\rho^+$, and Lemma \ref{lem-wash} shows that $\rho_\infty=0$, so it remains to compute $G_m = \prod_{v\in S_m\setminus \{\infty\}} G_{m,v}$. 

The discriminant of $f_m$ is $D^2=(m^2+3m+9)^2$, so we have
\[
S_m=\{\infty,2\}\cup\{v\mid D\}.
\]
However, by Lemma \ref{lem-genus1b}, the polynomial $f_m(x)$ is irreducible over $\QQ_v$ for any finite prime $v\in S_m$. It follows that the number of irreducible factors of $f_m(x)$ over $\QQ_v$ is $1$, and  therefore $I_{m,v}$ is zero-dimensional by Lemma \ref{lem-local_dims}. Since $G_{m,v}\subseteq I_{m,v}$, we conclude that $G_{m,v}$ is always trivial. Hence, $G_m$ is trivial, and Theorem \ref{thm-totally_pos} implies that 
\begin{align*}
\dim\Sel^{(2)}(\QQ,J_m) \leq g + \dim \Cl(L_m)[2] + \dim\ker\big(G_m\to\Cl(L_m)/2\Cl(L_m)\big) = g + \dim \Cl(L_m)[2],
\end{align*}
where $J_m$ is the jacobian of the elliptic curve $E_m$. Since the genus of $E_m$ is $1$, then $E_m \cong J_m$ over $\QQ$. Hence,
\begin{align*}
\rank_\ZZ(E_m(\QQ))\leq \dim\Sel^{(2)}(\QQ,E_m) \leq 1 + \dim \Cl(L_m)[2],
\end{align*}
as desired.
\end{proof}

\section{Genus $g=(p-1)/2$, where $p$ is a Sophie Germain prime}\label{sec-sophie}

The goal of this section is to find examples of hyperelliptic curves of genus $g\geq 2$ where the dimension of the Selmer group can be bounded in terms of a class group, as in Theorem \ref{thm-wash} (the genus $1$ case). We begin by looking at polynomials $f(x)$ that cut out extensions of degree $p$, contained inside a $q$-th cyclotomic extension, where $q$ is another prime.

\begin{thm}\label{thm-powerbasis}
Let $q>2$ be a prime such that the multiplicative order of $2\bmod q$ is either $q-1$ or $(q-1)/2$, and let $p>2$ be a prime dividing $q-1$. Let $\QQ(\zeta_q)$ be the $q$-th cyclotomic field, and let $L$ be the unique extension of degree $p$ contained in $\QQ(\zeta_q)$. Further, suppose that $\OO_L = \ZZ[\alpha]$ for some algebraic integer $\alpha\in L$, let $f(x)$ be the minimal polynomial of $\alpha$, and let $J/\QQ$ be the jacobian variety associated to the hyperelliptic curve $C: y^2=f(x)$. Then,
\begin{align*}
\dim\Sel^{(2)}(\QQ,J) \leq \rho_\infty + g + \dim \Cl^+(L)[2].
\end{align*}
If in addition the multiplicative order of $2\bmod p$ is even, then $$\dim\Sel^{(2)}(\QQ,J) \leq \rho_\infty + g + \dim \Cl(L)[2].$$
\end{thm}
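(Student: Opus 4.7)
The plan is to derive the bound directly from Proposition \ref{prop1}, whose hypotheses apply once one checks that $L$ is totally real: $L$ is a subextension of $\QQ(\zeta_q)$ of odd degree $p$, and complex conjugation has order $2$, hence it must act trivially on $L$, so $L\subseteq \QQ(\zeta_q)^+$. Proposition \ref{prop1} then yields
\[
\dim\Sel^{(2)}(\QQ,J) \leq j_\infty + \rho_\infty + \dim \Cl^+(L)[2] + \dim\bigl(G\cap \val(L_{J_\infty})\bigr),
\]
together with $j_\infty \leq g$. Thus, to obtain the stated bound it will be enough to prove that $G = \prod_{v\in S\setminus\{\infty\}} G_v$ is trivial.

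To locate $S$, I would use the fact that the only rational prime ramifying in $\QQ(\zeta_q)$ is $q$, so the discriminant of $L/\QQ$ is a power of $q$; together with the hypothesis $\OO_L=\ZZ[\alpha]$ this gives $\disc(f)=\disc(L)$ and hence $S=\{\infty,2,q\}$. By Lemma \ref{lem-local_dims}(3), $\dim I_v = m_v-1$, and since $G_v\subseteq I_v$, it suffices to show that $f(x)$ is irreducible over $\QQ_v$ (i.e.\ $m_v=1$) for each $v\in\{2,q\}$. For $v=q$, total ramification of $q$ in $\QQ(\zeta_q)$ descends to the subfield $L$, so $q$ is totally ramified in $L/\QQ$ and $f$ is irreducible over $\QQ_q$. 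For $v=2$, identify $\Gal(L/\QQ)$ with $G/H$, where $G=(\ZZ/q\ZZ)^\times$ and $H$ is the unique subgroup of index $p$, namely the subgroup of $p$-th powers. Under the assumption that $\operatorname{ord}_q(2)$ equals either $q-1$ or $(q-1)/2$, a short calculation (using that $p$ is an odd prime dividing $q-1$) shows that $2\notin H$, so the image of $2$ in $G/H$ has order exactly $p$; equivalently, $2$ is inert in $L$, and therefore $f$ is irreducible over $\QQ_2$.

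With $G$ trivial, $\dim(G\cap \val(L_{J_\infty}))=0$, and Proposition \ref{prop1} together with $j_\infty\leq g$ yields
\[
\dim\Sel^{(2)}(\QQ,J) \leq g + \rho_\infty + \dim \Cl^+(L)[2],
\]
which is the first assertion of the theorem. For the refined version, assume in addition that the order of $2$ modulo $p$ is even. Then Corollary \ref{cor-order2iseven} gives $\dim\Cl(L)[2]=\dim\Cl^+(L)[2]$, and substituting produces the sharper bound.

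The main obstacle, I expect, will be the case analysis at $v=2$: while morally the hypothesis that $2$ has large order modulo $q$ should force $2$ to be inert in every prime-degree subfield of $\QQ(\zeta_q)$, the case $\operatorname{ord}_q(2)=(q-1)/2$ requires checking carefully that $2$ does not accidentally lie in the subgroup of $p$-th powers of $(\ZZ/q\ZZ)^\times$, and this is where the hypotheses $p$ odd and $p\mid q-1$ must be used in tandem.
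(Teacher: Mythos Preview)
Your proposal is correct and follows essentially the same route as the paper: identify $S=\{\infty,2,q\}$ via the power-basis hypothesis, show $G_2$ and $G_q$ vanish by proving $f$ is irreducible over $\QQ_2$ and $\QQ_q$, then invoke Proposition~\ref{prop1} and Corollary~\ref{cor-order2iseven}. The only cosmetic difference is at $v=2$: the paper cites the splitting of $2$ in $\QQ(\zeta_q)$ (Marcus) and descends, whereas you argue directly that $2$ is not a $p$-th power in $(\ZZ/q\ZZ)^\times$; your worry about the case $\operatorname{ord}_q(2)=(q-1)/2$ is unfounded, since then $2\in H$ would force $(q-1)/2\mid (q-1)/p$, i.e.\ $p\mid 2$, which is excluded by $p>2$.
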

\begin{proof}
We apply Stoll's algorithm to $y^2=f(x)$, in order to compute $G$. Note that $f(x)$ is monic, integral, and irreducible over $\QQ$, since $\alpha$ generates $\OO_L$ as a ring. Moreover, since $\OO_L=\ZZ[\alpha]$, it also follows that
$$\disc(f(x))= \disc(\mathcal{O}_L),$$
and since $L\subseteq \QQ(\zeta_q)$, the only prime dividing $\disc(\OO_L)$ is $q$. Hence, the set $S=\{\infty,2,q\}$. We will show that $G_2$ and $G_q$ are trivial, and therefore $G$ is trivial as well. Indeed:
\begin{itemize}
\item Let $v=2$. Since the order of $2$ modulo $q$ is $q-1$ or $(q-1)/2$ by hypothesis, it follows from \cite[Theorem 26]{marcus} that $2$ splits into $1$ or $2$ prime ideals in $\ZZ(\zeta_q)/\ZZ$, and therefore $2$ must be inert in the intermediary extension $L/\QQ$ of degree $p$. In particular, the polynomial $f(x)$ is irreducible over $\QQ_2$, since it defines an unramified extension $L_2/\QQ_2$ of degree $p$. Hence, $m_2=1$, and the dimension of $I_2$ is $m_2-1=0$ by Lemma \ref{lem-local_dims}. Since $G_2\subseteq I_2$, we conclude that $G_2$ is trivial as well.

\item Let $v=q$. Since $L\subseteq \QQ(\zeta_q)$ and $q$ is totally ramified in the cyclotomic extension, it is also totally ramified in $L/\QQ$. Thus, $f(x)$ is irreducible over $\QQ_q$ because it defines a totally ramified extension $L_q/\QQ_q$ of degree $p$. Thus, $m_q=1$ and arguing as above in the case of $v=2$, we conclude that $G_q$ is trivial.
\end{itemize}
Since the only finite primes in $S$ are $2$ and $q$, it follows that $G=G_2\times G_q$ is trivial. Now, Proposition \ref{prop1} shows the bound $\dim\Sel^{(2)}(\QQ,J) \leq \rho_\infty + g + \dim \Cl^+(L)[2].$ If in addition the order of $2\bmod p$ is even, and since $L/\QQ$ is cyclic of degree $p$, then Corollary \ref{cor-order2iseven} shows that $\rho=\rho^+$. Hence $\dim\Sel^{(2)}(\QQ,J) \leq  \rho_\infty + g + \dim \Cl(L)[2],$ as claimed.
\end{proof}

The drawback, however, of the previous result is that there are very few subfields of cyclotomic extensions with a power basis, as the following result points out.

\begin{thm}[Gras, \cite{gras}]\label{thm-gras}
Let $L$ be an extension of degree $p\geq 5$, and let $\OO_L$ be the maximal order of $L$. Then, $\OO_L$ has a power basis if and only if $L=\QQ(\zeta_q)^+$ is the maximal real subfield of the $q$-th cyclotomic field, where $q$ is a prime with $q=2p+1$. 
\end{thm}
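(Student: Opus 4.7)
The plan is to prove the two implications separately. The ``if'' direction is a direct discriminant comparison, while the ``only if'' direction is the substantive content of Gras's theorem \cite{gras}.

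For the ``if'' direction, I would assume $L = \QQ(\zeta_q)^+$ with $q = 2p+1$ prime, set $\alpha = \zeta_q + \zeta_q^{-1}$, and prove $\OO_L = \ZZ[\alpha]$ by comparing discriminants. The conductor-discriminant formula applied to the cyclic extension $L/\QQ$, whose conductor is $q$, yields $|\disc(\OO_L)| = q^{p-1}$. A direct Chebyshev-style computation of the minimal polynomial of $\alpha$ yields $|\disc(\ZZ[\alpha])| = q^{p-1}$ as well. Since $\disc(\ZZ[\alpha]) = [\OO_L:\ZZ[\alpha]]^2 \disc(\OO_L)$, the index must be $1$, giving $\OO_L = \ZZ[\alpha]$.

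For the ``only if'' direction, suppose $\OO_L = \ZZ[\alpha]$ with minimal polynomial $f(x) \in \ZZ[x]$. The starting observation is Dedekind's theorem: for every unramified prime $r$, the factorization of $f(x) \bmod r$ in $\FF_r[x]$ matches the splitting type of $r$ in $L$. Since $L/\QQ$ is cyclic of odd prime degree $p$, each unramified prime is either totally split or inert. For $p \geq 5$, neither $2$ nor $3$ can ramify in $L$ (ramification of degree $p$ at $r \in \{2,3\}$ would be neither tame, which needs $p \mid r-1$, nor wild, which needs $r \mid p$), and neither can split completely (that would require $p$ distinct roots of $f(x)$ in $\FF_r$, impossible when $r < p$). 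Hence monogenicity forces $2$ and $3$ to be inert in $L$, which imposes constraints on the characters of $(\ZZ/m\ZZ)^\times$ cutting out $L$ inside the cyclotomic field $\QQ(\zeta_m)$, where $m$ is the conductor of $L$.

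These local constraints alone are not enough to pin down $L$. The next step in the plan is to use monogenicity to restrict $m$ itself, which for cyclic degree $p$ must take the form $m = p^a \prod_i q_i$ with $q_i \equiv 1 \pmod p$ and $a \in \{0, 2\}$. For each conductor shape other than $m = q = 2p+1$, one must produce a contradiction. The plan here follows Gras's approach: perform Dedekind-type index calculations at the primes dividing $m$, combined with a finer arithmetic analysis of units and class groups of $L$ relative to those of the ambient cyclotomic field $\QQ(\zeta_m)$, in order to exhibit a prime at which $[\OO_L:\ZZ[\alpha]] > 1$ and thus contradict $\OO_L = \ZZ[\alpha]$.

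The main obstacle is this global case-elimination. Even with all the local data in hand, ruling out composite conductors (such as $m = q_1 q_2$ or $m = p^2$) and single-prime conductors $m = q \neq 2p+1$ requires a tailored argument in each case; this is the technical heart of \cite{gras}. By contrast, the other steps are essentially mechanical once the framework is in place.
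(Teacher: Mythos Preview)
The paper does not prove this theorem; it is stated with attribution to Gras \cite{gras} and used as a black box to motivate the restriction to Sophie Germain primes. There is therefore nothing in the paper to compare your proposal against.

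As to the proposal itself: your ``if'' direction via discriminant comparison is standard and correct. For the ``only if'' direction you have the right opening moves (Dedekind's criterion forces $2$ and $3$ to be inert once you know they cannot ramify or split in a cyclic degree-$p$ extension with $p\geq 5$), and you correctly identify the conductor case-elimination as the technical heart that you are not attempting to reproduce. So your write-up is an honest roadmap pointing to \cite{gras} rather than a self-contained proof, which matches how the paper itself treats the result. One small note: the theorem as stated in the paper omits the hypothesis that $L/\QQ$ is cyclic (or abelian), which you silently and correctly reinstate; Gras's result is about cyclic extensions of prime degree, and without that hypothesis the statement is false.
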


For instance, the unique cyclic number field of degree $5$ with a power basis for the maximal order is $\QQ(\zeta_{11})^+$. Also, there is no cyclic number field of degree $7$ with a power basis for its maximal order (since $15$ is not a prime). Hence, we concentrate on those cyclic extensions of degree $p$, where $p$ is a Sophie Germain prime, i.e., $q=2p+1$ is also prime. 

\begin{thm}\label{thm-sophie}
Let $q\geq 7$ be a prime such that $p=(q-1)/2$ is also prime, and let $L=\QQ(\zeta_q)^+$ be the maximal real subfield of $\QQ(\zeta_q)$. Let $f(x)\in \ZZ[x]$ be any monic integral polynomial defining $L$, let $C: y^2=f(x)$, and let $J/\QQ$ be its jacobian. Then,  
\begin{align*}
\dim\Sel^{(2)}(\QQ,J) \leq \rho_{\infty} + j_\infty + \dim \Cl^+(L)[2] + \dim\ker\big(G\to\Cl(L)/2\Cl(L)\big).
\end{align*}
Moreover, if $f(x)$ is the minimal polynomial of $\zeta_q+\zeta_q^{-1}$ or $-(\zeta_q+\zeta_q^{-1})$, then 
\begin{align*}
\dim\Sel^{(2)}(\QQ,J) \leq \rho_{\infty} + j_\infty + \dim \Cl^+(L)[2].
\end{align*}
Further, if one of the following conditions is satisfied, 
\begin{enumerate}
\item the Davis--Taussky conjecture holds, or
\item the prime $2$ is inert in the extension $\QQ(\zeta_p)^+/\QQ$,
\item  $q\leq 92459$,
\end{enumerate}
 then $\rho_{\infty}=0$, and $\dim\Sel^{(2)}(\QQ,J) \leq  g + \dim \Cl(L)[2].$
\end{thm}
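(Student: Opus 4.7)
My plan is to prove the theorem in three stages, each a direct application of the machinery set up in the earlier sections.

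\emph{Stage 1 (general bound).} Since $L=\QQ(\zeta_q)^+$ is cyclic totally real of prime degree $p$, Proposition \ref{prop1} applies to any monic integral $f$ defining $L$, yielding
\[
\dim \Sel^{(2)}(\QQ, J) \leq j_\infty + \rho_\infty + \dim \Cl^+(L)[2] + \dim \bigl(G \cap \val(L_{J_\infty})\bigr).
\]
The inclusion $G \cap \val(L_{J_\infty}) \subseteq \ker(G \to \Cl(L)/2\Cl(L))$ noted in Section \ref{sec-betterbounds} controls the last term and gives the first claimed inequality.

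\emph{Stage 2 (refined bound when $f$ is the minimal polynomial of $\pm(\zeta_q + \zeta_q^{-1})$).} The goal is to show $G$ itself is trivial, so that the last term of Stage 1 vanishes. I would verify the hypotheses of Theorem \ref{thm-powerbasis}: the multiplicative order of $2$ modulo $q = 2p+1$ divides $2p$, hence lies in $\{1,2,p,2p\}$; orders $1$ and $2$ are excluded by $q \geq 7$, so the order is $p$ or $2p = q-1$, as required. Since $\zeta_q + \zeta_q^{-1}$ (and hence $-(\zeta_q+\zeta_q^{-1})$) generates $\OO_L = \ZZ[\zeta_q+\zeta_q^{-1}]$ as a $\ZZ$-algebra, the polynomial $f(x)$ provides a power basis, so $\disc(f) = \disc(\OO_L)$ is a power of $q$. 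The proof of Theorem \ref{thm-powerbasis} then shows $f$ is irreducible over $\QQ_2$ (because $2$ is inert in $L$ by the order condition) and over $\QQ_q$ (because $q$ is totally ramified in $L$), so $G_2$ and $G_q$ are both trivial; since $S=\{\infty,2,q\}$, the whole group $G$ is trivial and the second inequality follows.

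\emph{Stage 3 ($\rho_\infty = 0$ under (1)--(3)).} Under (1), Conjecture \ref{conj-taussky} asserts $\rho_\infty = 0$ tautologically. Under (2), Theorem \ref{thm-kim} gives $\rho_\infty = 0$ directly. Under (3), $\rho_\infty = 0$ is established by computation for each such $q$, either via the matrix criterion of Lemma \ref{lemma-m_infty} or, equivalently by Theorem \ref{thm-davistausskyclassnumberodd}, by checking that $h_{\QQ(\zeta_q)}$ is odd in the tabulated range. Once $\rho_\infty = 0$, two simplifications take effect: (a) $\res_\infty \colon \OO_L^\times/(\OO_L^\times)^2 \to V_\infty = \{\pm 1\}^p$ is injective by definition of $\rho_\infty$, and since both source and target have $\FF_2$-dimension $p$ it is an isomorphism, forcing $\res_\infty(\OO_L^\times/(\OO_L^\times)^2) \cap J_\infty = J_\infty$ and hence $j_\infty = \dim J_\infty = g$ by Lemma \ref{lem-Real_dims}(1); (b) the leftmost term in the exact sequence of Theorem \ref{thm-rhos}(2) vanishes, so $\Cl^+(L) \cong \Cl(L)$ and $\dim \Cl^+(L)[2] = \dim \Cl(L)[2]$. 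Plugging (a) and (b) into the Stage 2 bound yields $\dim \Sel^{(2)}(\QQ, J) \leq g + \dim \Cl(L)[2]$.

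There is no serious conceptual obstacle left once the earlier infrastructure is in place: each stage is a direct invocation of a previously-proven statement. The step that needs the most attention is Stage 2's verification that $\OO_L = \ZZ[\zeta_q+\zeta_q^{-1}]$ and that the discriminant is supported only at $q$ — so that $f$ remains irreducible over $\QQ_2$ and $\QQ_q$ — which is exactly what puts us into the setting of Theorem \ref{thm-powerbasis}.
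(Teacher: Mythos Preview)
Your proposal is correct and follows essentially the same route as the paper: Stage~1 is exactly the paper's invocation of Proposition~\ref{prop1}, Stage~2 matches the paper's verification that the order of $2$ modulo $q$ is $p$ or $2p$ and its appeal to Theorem~\ref{thm-powerbasis}, and Stage~3 handles conditions (1)--(3) in the same way. Two small differences worth noting: for condition~(3) the paper carries out the matrix computation of Lemma~\ref{lemma-m_infty} explicitly (with the embeddings $\tau_i$, the permutation $\phi_\sigma$, and a Magma check that $d_{\infty,u}=p-1$), rather than merely citing it; and in the final step the paper simply uses $j_\infty\leq g$ from Proposition~\ref{prop1}(2), whereas you prove the sharper fact $j_\infty=g$ (your surjectivity argument is correct, and in fact this equality is asserted in the introduction's version of the theorem).
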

\begin{proof} The first bound follows from Proposition \ref{prop1}, so let us assume that $f(x)$ is the minimal polynomial of $\zeta_q+\zeta_q^{-1}$ or $-(\zeta_q+\zeta_q^{-1})$.  The ring of integers $\OO_{\QQ(\zeta_q)^+}$ has a power basis, namely $\ZZ[\zeta_q+\zeta_q^{-1}]$. Moreover, since $p$ is a Sophie Germain prime (with $q=2p+1$ prime), it follows that the multiplicative order of $2\bmod q$ is a divisor of $2p = 2\cdot ((q-1)/2)$. Since $q\geq 7$, the order of $2$ is bigger than $2$, so it must be $p=(q-1)/2$ or $q-1$. Thus, Theorem \ref{thm-powerbasis} applies and we obtain $\dim\Sel^{(2)}(\QQ,J) \leq \rho_{\infty} + j_\infty + \dim \Cl^+(L)[2].$

Further, if (1), (2), or (3) holds, then by Conjecture \ref{conj-taussky}, or Theorem \ref{thm-kim}, respectively, we find that $\rho_\infty=0$ and $\rho=\rho^+$.

If $q\leq 92459$, we shall use the computational approach at the end of Section \ref{sec-totally} to show that $\rho_\infty=0$. Let $\zeta=\zeta_q=e^{2\pi i/q}$. Then, the ring of integers of $\QQ(\zeta_q)^+$ has a power basis, namely $\OO_{\QQ(\zeta_q)^+} = \ZZ[\zeta_q+\zeta_q^{-1}]$. Let $u=-(\zeta+\zeta^{-1})$ if $p\equiv 1 \bmod 4$ and $u=\zeta+\zeta^{-1}$ if $p\equiv 3 \bmod 4$, thus chosen so that $u$ is a unit in $\OO_L^\times$ of norm $1$. Moreover, we note that 
if $p\equiv 1 \bmod 4$, then
$$-(\zeta+\zeta^{-1}) < -(\zeta^{2}+\zeta^{-2}) < \cdots < -(\zeta^{\frac{p-1}{2}}+\zeta^{-\frac{p-1}{2}}) < 0 < -(\zeta^{\frac{p+1}{2}}+\zeta^{-\frac{p+1}{2}}) < \cdots < -(\zeta^p+\zeta^{-p}) <1,$$
and if $p\equiv 3 \bmod 4$, then
$$\zeta^p+\zeta^{-p} < \zeta^{p-1}+\zeta^{-(p-1)} < \cdots < \zeta^{\frac{p+1}{2}}+\zeta^{-\frac{p+1}{2}} < 0 < \zeta^{\frac{p-1}{2}}+\zeta^{-\frac{p-1}{2}} < \cdots < \zeta+\zeta^{-1} <1.$$
Thus, according to our conventions described in this section, the embeddings $\tau_1,\ldots, \tau_p$ are numbered so that $\tau_i(u)=r_i\in \RR$ with 
$$r_i = \begin{cases}
-(\zeta^i+\zeta^{-i}) & \text{ if } p\equiv 1 \bmod 4,\\
\zeta^{p+1-i}+\zeta^{-(p+1-i)} & \text{ if } p\equiv 3\bmod 4,
\end{cases}$$
for all $1\leq i \leq p$. Thus,
$${\rm res}_\infty(u) = 
 (-1,-1,-1,\ldots,1,1,1) \in H_\infty $$
 with $(p-1)/2$ minus ones when $p\equiv 1 \bmod 4$, and $(p+1)/2$ minus ones when $p\equiv 3 \bmod 4$.
 
 Now, the Galois group $\mathcal{G}=\Gal(L/\QQ)$ is cyclic of order $p$. Since $q=2p+1$ is prime, then the multiplicative order of $2\bmod q$ is $(q-1)/2$ or $q-1$. Thus, either $-2$ or $2$ is a primitive root mod $q$. Let $\gamma \colon \QQ(\zeta)\to \QQ(\zeta)$ that sends $\gamma(\zeta)=\zeta^2$. It follows that $\sigma = \overline{\gamma} \in \mathcal{G} \cong (\ZZ/q\ZZ)^\times/\{\pm 1\}$ is a generator. Thus, the automorphism $\sigma(\zeta+\zeta^{-1})=\zeta^2+\zeta^{-2}$ generates $\mathcal{G}$. Let $\phi=\phi_\sigma\in S_p$ be the permutation attached to $\sigma$ as defined above. For instance, if $p\equiv 1 \bmod 4$, then $r_1=-(\zeta+\zeta^{-1})$, where $\zeta=e^{2\pi i/q}$, so $\tau_1(\sigma(u))=r_2=-(\zeta^2+\zeta^{-2})$  and therefore $\phi(1)=2$. However, if $p\equiv 3\bmod 4$, then $r_1=\zeta^p+\zeta^{-p}$. We can find an integer $1\leq k\leq p$, and $k$ or $-k\equiv 2p \bmod q$, such that  $\sigma(u)=\zeta^k+\zeta^{-k}$. It follows that $\tau(\sigma(u))=r_k$ and so $\phi(1)=k$ in this case. In general, the permutation $\phi$ is defined by
 $$\phi(i) = \min\{2\cdot i \bmod q,\ (-2\cdot i) \bmod q\}$$
 when $p\equiv 1 \bmod 4$, and by
 $$\phi(i) = p+1 - \min\{(2\cdot (p+1-i)) \bmod q,\ q-(2\cdot (p+1-i) \bmod q)\},$$
 when $p\equiv 3 \bmod 4$, where our representatives in $\ZZ/q\ZZ$ are always chosen amongst $\{0,1,\ldots,q-1\}$.
	With these explicit descriptions of ${\rm res}_\infty(u)$ and $\phi_\sigma$, we have computed  (using Magma) the matrix $M_{\infty,u}$ for all primes $p$ and $q$, with $q\leq 92459$, as in the statement, and in all cases $d_{\infty,u}=p-1$. Hence, $\rho_\infty=0$ follows from Lemma \ref{lemma-m_infty}.
\end{proof}

\begin{remark}\label{rem-davistausskyoddclassnumber}
	If the Davis--Taussky conjecture holds, then the class number $h_K$ of $\QQ(\zeta_q)$ is odd (by Theorem \ref{thm-davistausskyclassnumberodd}), and therefore the class number $h_K^+$ of $L=\QQ(\zeta_q)^+$ is odd as well (because $h_K^+$ is a divisor of $h_K$). Hence, if the Davis--Taussky conjecture holds, then $\Cl(L)[2]$ is trivial, and the bound of Theorem \ref{thm-sophie} becomes 
	$$\dim\Sel^{(2)}(\QQ,J) \leq  g.$$
\end{remark}

\section{Examples} \label{sec-examples}

\subsection{Curves of Genus $1$}\label{sec-examplesgenus1}

In this section we present some data that was collected on the curves described in Theorem \ref{thm-wash} (the data collected can be found at \cite{danielsweb}). For $m\in\ZZ$, let $f_m(x)$, $E_m,$ and $L_m$ be as in Theorem \ref{thm-wash}. Using Magma, we attempted to compute (subject to GRH) the Mordell--Weil rank of $E_m(\QQ)$ and $\dim_{\FF_2}\Cl(L_m)[2]$ for every $m\in \ZZ$ such that $1\leq m \leq 20000$ and $m^2+3m +9$ is square-free. There are $12462$ such values of $m$ in the given interval, and we were able to compute the rank of $E_m/\QQ$ for  $12235$ of them. For the other 227 curves, we were only able to get upper and lower bounds on their rank.

Of the $12462$ curves that we tested, $10327$ of them (about $82.87\%$) had rank equal to the upper bound given in Theorem \ref{thm-wash}. However, one might expect that the sharpness of this upper bound would decay as $m$ gets larger and larger, and in fact that seems to be the case. Let us define a function to keep track of the sharpness of the bound in an interval. 
\begin{defn}
Let $$M = \{m\in \ZZ : 1\leq m \leq 20000 \hbox{ and }m^2+3m +9 \hbox{ is square-free}\} $$ 
and
$S = \{m\in M : \rank(E_m) = \dim \Cl(L_m) +1\}$. Given $I\subseteq M$, we define  $\displaystyle \Sharp(I) = \frac{\#(S\cap I)}{\#(M\cap I)}.$
\end{defn}
\begin{remark}
The set $S$ only includes curves whose rank was actually computable and, because of this, the $\Sharp(I)$ statistic only gives a lower bound for how sharp Washington's upper bound is over the set $I$. 
\end{remark}
 
In order to see how the sharpness of the upper bound degrades as $m$ grows, in Table \ref{tab-sharpness} we present $\Sharp(I)$ over disjoint intervals of length $1000$. In the data, we clearly see that the number of curves for which Washington's bound is sharp in a given interval does start to decrease, but the bound is still sharp more often than not (notice, however, that the sharpness is inflated by the fact that the bound is sharp every time the bound is $1$, since there is a point of infinite order $P = (0,1)$ on $E_m$). 

\begin{table}
\begin{center}
\renewcommand{\arraystretch}{1.2}
\begin{tabular}{|c|l||c|l|}\hline
$I$ & $\Sharp(I)$&$I$ & $\Sharp(I)$\\\hline
$[ 1 , 1000 ]$& 0.91451    & $[ 10001 , 11000 ]$& 0.81862\\
$[ 1001 , 2000 ]$& 0.88499 & $[ 11001 , 12000 ]$& 0.81451\\
$[ 2001 , 3000 ]$& 0.87581 & $[ 12001 , 13000 ]$& 0.79936\\
$[ 3001 , 4000 ]$& 0.84665 & $[ 13001 , 14000 ]$& 0.81833\\
$[ 4001 , 5000 ]$& 0.82051 & $[ 14001 , 15000 ]$& 0.81760\\
$[ 5001 , 6000 ]$& 0.82504 & $[ 15001 , 16000 ]$& 0.78560\\
$[ 6001 , 7000 ]$& 0.84911 & $[ 16001 , 17000 ]$& 0.82664\\
$[ 7001 , 8000 ]$& 0.84455 & $[ 17001 , 18000 ]$& 0.82258\\
$[ 8001 , 9000 ]$& 0.81862 & $[ 18001 , 19000 ]$& 0.80512\\
$[ 9001 , 10000 ]$& 0.80000 & $[ 19001 , 20000 ]$& 0.78583\\\hline
\end{tabular}
\end{center}
\caption{Measure of the sharpness of the bound presented in Theorem \ref{thm-wash}.}  \label{tab-sharpness}
\end{table}
\renewcommand{\arraystretch}{1}

To see how fixing the bound first affects its sharpness, we define the following sets
$$T(r) = \{m \in M:\rank(E_m) =  r\}\hbox{ and } B(b) = \{m \in M: \dim\Cl(L_m)[2]+1 = b\}.$$ In Table \ref{tab-given_bound}, for each bound $b$ that occurs we give the number of curves of rank $r$ whose bound is $b$ for each $r$ that occurs. We also give the percentage of the curves whose rank is exactly $b$ and provide the totals of each column so that we can see how many curves of each rank we found (for similar statistics and conjectures in a broader context, see \cite{lozano}).

\begin{table}
\begin{center}
\renewcommand{\arraystretch}{1.3}
\begin{tabular}{|c||c|c|c|c||c|}\hline
$b$& $\#(T(1)\cap B(b))$ & $\#(T(3)\cap B(b))$&$\#(T(5)\cap B(b))$&$\#(T(7)\cap B(b))$ & $\frac{\#(T(b)\cap B(b))}{\#B(b)}$ \\\hline
1 & 7391 & 0 & 0 & 0  &  1.0000  \\\hline
3 & 1565 & 2809 & 0 & 0  & 0.6422   \\\hline
5 & 37 & 298 & 125 & 0  & 0.2717   \\\hline
7 & 0 & 1 & 7 & 2   & 0.2000  \\\hline\hline
Totals & 8993 & 3108 & 132 & 2   & ------\\\hline

\end{tabular}
\end{center}
\caption{Sharpness for a fixed rank bound $b$ in the interval $1\leq m \leq 20000$.}  \label{tab-given_bound}
\end{table}
\renewcommand{\arraystretch}{1}

From Table \ref{tab-given_bound} we can see that all of the curves that we computed, have odd rank less than or equal to 7. It also turns out that for all of the curves that we computed, Washington's bound is also odd and less than or equal to 7. In Table \ref{tab-first_m}, we give the first $m$ such that $\rank(E_m) = r$ and $\dim \Cl(L_m)+1= b$ for each pair $(r,b)$ that occurred. 

It is also interesting to point out that the average rank among curves with $b=1$ is $1$, the average rank among curves with $b=3$ is $2.23$, among curves with $b=5$ is $3.38$, and among curves with $b=7$ the average rank is $5.20$ (see \cite{lozano} for other examples of {\it Selmer bias} in genus $1$). 

\begin{table}
\begin{center}
\renewcommand{\arraystretch}{1.2}
\begin{tabular}{|c|l||c|l||c|l|}\hline
$m$ & $(r,b)$& $m$ & $(r,b)$& $m$ & $(r,b)$ \\\hline 
$ 1 $ & $( 1, 1 )$&$ 11$ & $(3, 3 )$&$ 143$ & $(5, 5 )$\\
$ 170$ & $(1, 3)$&$ 157$ & $(3, 5 )$&$ 3461$ & $(5, 7 )$\\
$ 2330$ & $(1, 5 )$&$ 19466$ & $(3, 7)$&$ 12563$ & $(7, 7 )$\\\hline
\end{tabular}
\end{center}
\caption{The first occurrence of each (rank, bound) pair that occurs for some $m\leq 20000$.}  \label{tab-first_m}
\end{table}
\renewcommand{\arraystretch}{1}
\begin{example}
Lastly, for the sake of concreteness, we end this section with an explicit example. When $m=143$ we have that 
$$E_{143}:y^2 = x^3 + 143x^2 - 146x + 1 $$
with conductor $2^2\cdot 20887^2$. Using Magma, we can compute that 
$$\Cl(L_{143})\cong \ZZ/2\ZZ + \ZZ/2\ZZ + \ZZ/4\ZZ + \ZZ/4\ZZ,$$
and so Washington's bound for the Mordell--Weil rank is $\dim\Cl(L_{143})[2] +1 = 4+1 = 5.$ Looking for points on $E_{143}$ we find $5$ independent points of infinite order that generate the Mordel--Weil group. 
$$E_{143}(\QQ) \cong \ZZ^5 = \langle (126/121 , -3023/1331), (90 , -1369), (65/64 , 577/512), (21/4 , 
-461/8), (-1 , 17) \rangle.$$
\end{example}

\subsection{Examples in the Sophie Germain case}\label{sec-examplessophie}

In this section we show examples of hyperelliptic curves that arise from Theorem \ref{thm-sophie}. 

\begin{example}
Let $q=7$ and $p=3$. Then, $L=\QQ(\zeta_7)^+$ is the maximal real subfield of $\QQ(\zeta_7)$, which has degree $3$, and class number $1$ (see \cite[Tables, \S 3]{Wash2}). Note that the order of $2$ in $(\ZZ/3\ZZ)^\times$ is $2=p-1$, and therefore condition (2) of Theorem \ref{thm-sophie} is met. Hence, if $f(x)$ is the minimal polynomial of $\pm(\zeta_7+\zeta_7^{-1})$, then the jacobian $J(\QQ)$ of $y^2=f(x)$ satisfies
$$\rank_\ZZ(J(\QQ))\leq \dim_{\FF_2} \Sel^{(2)}(\QQ,J) \leq g+\dim \Cl(L)[2] = 1.$$
For instance, if $f(x)$ is chosen to be the minimal polynomial of $-(\zeta_7+\zeta_7^{-1})$, then $f(x)=x^3 - x^2 - 2x + 1$ we in fact recover the elliptic curve $E_{-1}$ of Theorem \ref{thm-wash} for $m=-1$. The Mordell--Weil rank of the elliptic curve $E_{-1}\colon y^2=x^3 - x^2 - 2x + 1$ is $1$, so the bound on the rank given by Theorem \ref{thm-sophie} in this case is in fact sharp.
\end{example}

\begin{example} \label{ex-pequal5b}
If $q=11$ and $p=5$, then $L=\QQ(\zeta_{11})^+$ is a field of degree $5$ and trivial class group. Since $2$ is a primitive root modulo $5$, if $f(x)$ be the minimal polynomial of $\pm(\zeta_{11}+\zeta_{11}^{-1})$, then Theorem \ref{thm-sophie} says 
$$\rank_\ZZ(J(\QQ))\leq \dim_{\FF_2} \Sel^{(2)}(\QQ,J) \leq g+\dim \Cl(L)[2] = \frac{p-1}{2}=2,$$
where $J$ is the jacobian of $y^2=f(x)$. If $f(x)$ is the minimal polynomial of $\zeta_{11}+\zeta_{11}^{-1}$, then $f(x)=x^5 + x^4 - 4x^3 - 3x^2 + 3x + 1$. Below we will describe a general method to find some rational points on the jacobian, and show that $2\leq \rank_\ZZ(J)\leq 2$. Thus, the rank is $2$ and the bound is sharp. 
\end{example}

\begin{example} 
If $q=23$ and $p=11$, then $L=\QQ(\zeta_{23})^+$ is a field of degree $11$ and trivial class group. Since $2$ is a primitive root modulo $11$, if $f(x)$ be the minimal polynomial of $\pm(\zeta_{23}+\zeta_{23}^{-1})$, then Theorem \ref{thm-sophie} says 
$$\rank_\ZZ(J(\QQ))\leq \dim_{\FF_2} \Sel^{(2)}(\QQ,J) \leq g+\dim \Cl(L)[2] = \frac{p-1}{2}=5,$$
where $J$ is the jacobian of $y^2=f(x)$. If $f(x)$ is the minimal polynomial of $-(\zeta_{23}+\zeta_{23}^{-1})$, then 
$$f(x)=x^{11} - x^{10} - 10x^9 + 9x^8 + 36x^7 - 28x^6 - 56x^5 + 35x^4 + 35x^3 -    15x^2 - 6x + 1.$$ Below we will show that $4\leq \rank_\ZZ(J)\leq 5$. A full $2$-descent (via Magma) shows that the rank is, in fact, equal to $4$.
\end{example}

We finish this section by describing a method to produce points in $J$ (as in Theorem \ref{thm-sophie}, and compute the rank of the subgroup generated by these points. Let $p$ be a fixed Sophie Germain prime, let $q=2p+1$, and let $L=\QQ(\zeta_q)^+$ be
the maximal real subfield of $\QQ(\zeta_q)$, where $\zeta_q$ 
is a primitive $q$-th root of unity. The minimal polynomial
for $\zeta_q+\zeta_q^{-1}$ has constant term 1 or $-1$ according to
whether $p$ is congruent to $1$ or $3 \bmod 4$. If $f\in \ZZ[x]$ is 
a polynomial with constant term $1$, then the point $(0,1)$ will 
be on the curve
\[
C:y^2=f(x)
\]
and furthermore the factorization of $f(x)-1$ will provide points
on the jacobian $J$ of $C$, allowing us to obtain a lower bound for
the Mordell--Weil rank of $J(\QQ)$.  For this reason, we define $f$ to be the minimal polynomial
of  $\theta=(-1)^{(p-1)/2}(\zeta_q+\zeta_q^{-1})$.

A lower bound for the rank can be computed by considering the
images of the factors of $f-1$ in $L^\times/(L^\times)^2$
under the map $\delta_\QQ:J(\QQ)\to H_\QQ$ (see Section \ref{sec-2descent}). Let $y_0\in\QQ$, let $g(x)$ be an
irreducible factor of $f(x)-y_0^2$, 
and let $K$ be the splitting field of $g(x)$.  Then, over $K$, we have
a factorization
\[
g(x)=\prod_{i=1}^n (x-x_i),
\]
and the points $P_i=(x_i, y_0)$ are in $C(K)$. Under the map $\delta_K\colon J(K)\to H_K$,
as a map from $C(K)$ to $L_K^\times/(L_K^\times)^2$
we have
\[
P_i\mapsto (x_i-\theta_K)(L_K^\times)^2.
\]
If $f$ remains irreducible over $K$, then $L_K$ is simply the composite
extension of $K$ and $L_q$, and $\theta_K=\theta$. As a map from $J(K)$ to $L_K^\times/(L_K^\times)^2$, $\delta_K$ is a homomorphism of groups, and hence the divisor $P_1+P_2+\cdots +P_n$ maps to
\[
\prod_{i=1}^n (x_i-\theta_K)(L_K^\times)^2=(-1)^ng(\theta_K)(L_K^\times)^2.
\]
On the other hand, the divisor $P_1+P_2+\cdots +P_n$ can be regarded
as the base extension to $K$ of a certain divisor $D$ defined over $\QQ$, hence over $\QQ$ we have
\[
D \mapsto (-1)^ng(\theta)(L^\times)^2,
\]
via the map $F_K$ of Section \ref{sec-2descent}. Thus, for each irreducible factor $g(x)$ we obtain a point on the jacobian $J(\QQ)$ that corresponds to the divisor $D=D(g)$ defined above. Moreover, since the map $J(\QQ)/2J(\QQ)\to H_\QQ$ induced by $\delta_\QQ$ is injective (\cite[Lemma 4.1]{stoll}), in order to compute the rank of the subgroup generated by $\{D(g) \}_{g}$, it suffices to check the dimension of the (multiplicative) subgroup generated by $\{ \delta_\QQ(D(g)) \}$ in $H=H_\QQ$.  

\begin{example}\label{ex-pequal5c}
For example let $q=11$ and $p=5$, as in Example \ref{ex-pequal5b}, so $L=\QQ(\zeta_{11})^+$ and $f(x)=x^5+x^4-4x^3-3x^2+3x+1$. Then, 
$f(x)-1$ factors as
\[
x(x^2 - 3)(x^2 + x - 1).
\]
Their images in $L^\times/(L^\times)^2$ via $\delta_\QQ$ are
\[
-\theta(L^\times)^2,\,(\theta^2 - 3)(L^\times)^2,\text{ and }
(\theta^2 + \theta - 1)(L^\times)^2
\]
respectively. To obtain a lower bound for the rank of $J$ it remains only to reduce $\{-\theta,\,(\theta^2 - 3),(\theta^2 + \theta - 1)\}$ to a multiplicatively
independent subset modulo squares. Since the product of all three is
\[
-(f(\theta)-1)(L^\times)^2=(L^\times)^2
\]
we see that at most two are multiplicatively independent. On the
other hand $-\theta(\theta^2 - 3)$ is not a square in $L$, so
\[
-\theta(L^\times)^2\text{ and }(\theta^2 - 3)(L^\times)^2
\]
are multiplicatively independent give us a lower bound of $2$ as the 
rank of $J$ over $\QQ$. An upper bound of $2$ for the rank was computed in Example \ref{ex-pequal5b}, hence the rank is exactly $2$.
\end{example}

In Table \ref{tab-UpperAndLower_bounds}, we have collected the upper bound given by Theorem \ref{thm-sophie}, together with some computational data of lower bounds for the rank of the jacobian $J$ associated to the first few Sophie Germain primes, obtained using the method we have described here. It is worth pointing out that the bound given by Theorem \ref{thm-sophie} in these examples is unconditional (i.e., not dependent on the Davis--Taussky conjecture) since $q\leq 92459$. We also note that $2$ is inert in $\QQ(\zeta_p)^+$ in some cases such as $p=5,11,23,29,53,83,131,173$ but not in others such as $p=41,89,113$.

\begin{table}[h!]
\begin{center}
\renewcommand{\arraystretch}{1}
\begin{tabular}{|r||lllllllllll|}\hline
$p$& 5& 11&23&29&41&53&83&89&113&131&173\\
\hline
upper& 2 & 5&11&14&20&26&41&44&56&65&86\\
\hline
lower& 2 & 4&6&4&4&4&10&6&4&10&4\\
\hline
\end{tabular}
\end{center}
\caption{Upper and lower bounds for the rank of $J_q(\QQ)$ when $p=(q-1)/2$ is Sophie Germain.}  \label{tab-UpperAndLower_bounds}
\end{table}

\renewcommand{\arraystretch}{1}
\bibliography{Alvaro-Harris-Erik}{}
\bibliographystyle{plain}
\end{document}